\documentclass[12pt]{amsart}
\usepackage{bbm}
\usepackage[pdftex]{graphicx}
\usepackage{amscd, amssymb, dsfont, upgreek, mathrsfs}
\input{xy}
\xyoption{all}
%\setlength{\hoffset}{-1in} \setlength{\voffset}{-1in}
%\setlength{\oddsidemargin}{1in} \setlength{\evensidemargin}{1in}
%\setlength{\textwidth}{6.0in} \setlength{\textheight}{9in}
%\setlength{\topmargin}{0.5in}

%%%%%%%%%%%%%%%%%%%%%%%%%%%%%%%%%%%
\newtheorem{Thm}{Theorem}
\newtheorem{Conj}[Thm]{Conjecture}
\newtheorem{Prop}[Thm]{Proposition}
\newtheorem{Def}[Thm]{Definition}
\newtheorem{Def/Thm}[Thm]{Definition/Theorem}
\newtheorem{Cor}[Thm]{Corollary}
\newtheorem{Lemma}[Thm]{Lemma}

\theoremstyle{remark}

%\theoremstyle{EG}

%%%%%%%%%%% symbols %%%%%%%%%
%\renewcommand{\theequation}{\thesubsection.\arabic{equation}}
%\numberwithin{equation}{section}

\newcommand{\ti }{\times}
\newcommand{\ot }{\otimes}

%%%%%%%%%%%%%%%%%%%%%%%%%%%%%%%%%%%
%%%%%%%%%%%%%%%%%%%%%%%%%%%%%%%%
%%%%%%%%%%%%% mathrm %%%%%%%%%%%%%
%%%%%%%%%%%%%%%%%%%%%%%%%%%

\newcommand{\lann}{\langle\langle}
\newcommand{\rann}{\rangle\rangle}
\newcommand{\lannn}{\left\langle\left\langle}
\newcommand{\rannn}{\right\rangle\right\rangle}

%%%%%%%%%%%%%%%%%%%%%%%%%%%%%%%%%%%%%%%%%
%%%%%%%% Bbb %%%%%%%%%%
%%%%%%%%%%%%%%%%%%%%%%%
\newcommand{\NN}{{\mathbb N}}

\newcommand{\PP }{{\mathbb P}}

\newcommand{\QQ }{{\mathbb Q}}
\newcommand{\CC }{{\mathbb C}}
\newcommand{\ZZ }{{\mathbb Z}}

%%%%%%%%%%%%%%%%%%%%%%%%%%%%%
%%%%%%%%%%%%% Greek letters %%%%%%%%
%%%%%%%%%%%%%%%%%%%%%%%%%%%%%%%%%

%%%%%%%%%%%%%%%%%%%%%%%%%%%%%%%%%%%
%%%%%%%%%%%%%%%%%%%%%%%%%%%%%%%%%%

%%%%%%%%%%%%%%

\newcommand{\vir}{\mathrm{vir}}

%%%%%%%%%%%%%%

\newcommand{\DD}{\mathsf{D}}

\newcommand{\T}{{\mathsf{T}}}

\newcommand{\lan}{\langle}
\newcommand{\ran}{\rangle}

\newcommand{\pP}{\mathsf{P}}

%%%%%%%
%%%%%%%
%%%%%%%
%%%%%%%
%%%%%%%
%%%%%%%
%%%%%%%

%\newcommand{\ufrX}{\underline{\X}}

%\newcommand{\vir}{\mathrm{vir}}
%\newcommand{\T}{T}

\newcommand{\ppl}{{\mathsf{P}}\left[}
\newcommand{\ppr}{\right]}

\def \proj {{\mathbb{P}}}
\newcommand{\com}{{\mathbb{C}}}

\begin{document}

\title[Gromov-Witten invariants of Calabi-Yau manifolds with two K\"{a}hler parameters]
{Gromov-Witten invariants of Calabi-Yau manifolds with two K\"{a}hler parameters}

\author{Hyenho Lho}
\address{Department of Mathematics, ETH Z\"urich}
\email {hyenho.lho@math.ethz.ch}
\date{April 2018}.

\begin{abstract} 
We study the Gromov-Witten theory of $K_{\PP^1\ti\PP^1}$ and some Calabi-Yau hypersurface in toric variety. We give a direct geometric proof of the holomorphic anomaly euqation for $K_{\PP^1\ti\PP^1}$ in the form predicted by B-model physics. We also calculate the closed formula of genus one quasimap invariants of Calabi-Yau hypersurface in $\PP^{m-1}\ti\PP^{n-1}$ after restricting second K\"{a}hler parameter to zero. By wall-crossing theorem between Gromov-Witten and quasimap invariants, we can obtain the genus one Gromov-Witten invariants.
 \end{abstract}

\maketitle

\setcounter{tocdepth}{1} 
\tableofcontents

\setcounter{section}{-1}

\section{Introduction}

While the Gromov-Witten theory of Calabi-Yau manifold with one K\"{a}hler parameter is well studied (\cite{GJR,KL,Popa1,Zg1}), the Gromov-Witten theory of manifold with more than one K\"{a}hler parameter is not well studied yet. In this paper we study the Gromov-Witten invariants of several manifolds with two K\"{a}hler parameters.
 
 In \cite{CKg0,CKg, CKw,CKM} the quasimap invariants were introduced and its relationship between the Gromov-Witten invariants are also studied. The quasimap theory is often more effective in computational or theoretical aspects. Since for all examples in our paper, the Gromov-Witten theory is equivalent to the quasimap theory by the results of \cite{CKg,CKw,CJR}, we study the quasimap theory instead of the Gromov-Witten theory.

\subsection{Toric Calabi-Yau manifolds}
The computation of the Gromov-Witten invariant of toric Calabi-Yau manifolds is relatively more accessible due to torus localization theorem. For example the Gromov-Witten invariants of $\mathcal{O}_{\PP^1}(-1)\oplus\mathcal{O}_{\PP^1}(-1)$ are completely solved in \cite{GP} for all genus. For $K_{\PP^2}$, the Gromov-Witten theory is studied for all genus in terms of holomorphic anomaly equation in \cite{KL}. In Section 3 we consider the Gromov-Witten theory of total space of the canonical bundle $K_{\PP^1\ti\PP^1}$ over $\PP^1 \times \PP^1$. We give a direct geometric approach using the quasimap invariants of $K_{\PP^1\ti\PP^1}$. Another approach using the topological recursion of Eynard and Orantin might be possible (\cite{BKMP,EMO,FLZ}). 

Define genus $g$ quasimap potential function of $K_{\PP^1\ti\PP^1}$

$$\mathsf{F}_g(q_1,q_2):=\sum_{d_1,d_2 \ge 0}q_1^{d_1}q_2^{d_2}\int_{[\overline{Q}_{g,0}(K_{\PP^1\ti\PP^1},(d_1,d_2))]^{vir}}1.$$
We study the holomorphic anomaly equations for the new series
$$\overline{\mathsf{F}}_g(q):=\mathsf{F}_g(q,q).$$
In order to state the holomorphic anomaly equations, we require the following power series{\footnote{The relationship between the series $L$, $C_1$, $A_2$ and the generators of the ring of quasi modular forms with respect to the groups $\Gamma_0(4)$ is explained in \cite{ASYZ}}} in $q$.
\begin{align*}
L(q)&=(1-16q)^{-\frac{1}{4}}\\
I_1(q)&=\sum \frac{2(2d)!(2d-1)!}{(d!)^4}q^d\\
C_1(q)&=1+q \frac{\partial}{\partial q} I_1\\
A_2(q)&=\frac{1}{L^4}\left(\frac{q\frac{d}{dq}C_1}{C_1}-\frac{L^4}{4}+\frac{1}{2}\right)
\end{align*}

The ring $\mathbb{C}[L^{\pm1}]=\mathbb{C}[L,L^{-1}]$ will play a basic role.
%Since 
%$$q = \frac{1}{27}\left(L^{-3}-1\right),$$
%we have $\mathbb{C}[q] \subset \mathbb{C}[L^{\pm1}]$.
Consider the
free polynomial rings in the variables $A_2$ and $C_1^{-1}$ over $\com[L^{\pm1}]$,
\begin{equation}
\label{dsdsds7}
\mathbb{C}[L^{\pm1}][A_2]\, , \ \ \ 
\mathbb{C}[L^{\pm1}][A_2,C_1^{-1}]\,.
\end{equation}
There are canonical maps
\begin{equation}\label{dsdsds22}
\mathbb{C}[L^{\pm1}][A_2]\rightarrow \mathbb{C}[[q]]\, , \ \ \ 
\mathbb{C}[L^{\pm1}][A_2,C_1^{-1}]\rightarrow \mathbb{C}[[q]]
\end{equation}
given by assigning the above defined series $A_2(q)$ and
$C_1^{-1}(q)$ to the variables $A_2$ and $C_1^{-1}$ respectively.
We may therefore consider elements of the rings \eqref{dsdsds7} {\em either} 
as free polynomials in the variables $A_2$ and $C_{1}^{-1}$ {\em or}
as series in $q$.

Let $F(q)\in \mathbb{C}[[q]]$ be a series in $q$. When we write
$$F(q) \in \mathbb{C}[L^{\pm1}][A_2]\, ,$$
we mean there is a canonical lift ${F}\in \mathbb{C}[L^{\pm1}][A_2]$
for which
$${F} \mapsto F(q) \in \mathbb{C}[[q]]$$
under the map \eqref{dsdsds22}. 
The symbol $F$ {\em without the
argument $q$} is the lift.
The notation 
$$F(q) \in \mathbb{C}[L^{\pm1}][A_2,C_1^{-1}]$$
is parallel.

Let $T$ be the standard coordinate mirror to $t=\text{log}(q)$,
$$T=\text{log}(q)+I_1(q)\,.$$
Then $Q(q)=\text{exp}(T)$ is the usual mirror map.

\begin{Thm} \label{Th1}For the quasimap invariants of $K_{\PP^1\times\PP^1}$,
\begin{enumerate}
 \item [(i)] $\overline{\mathsf{F}}_g(q) \in \CC[L^{\pm 1}][A_2]$ for $g \ge 2$,
 \item [(ii)] $\frac{\partial^k \overline{\mathsf{F}}_g}{\partial T^k}(q) \in \CC[L^{\pm 1}][A_2,C_1^{-1}]$ for $g \ge 1$ and $k \ge 1$.
\end{enumerate}

\end{Thm}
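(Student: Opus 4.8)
The plan is to evaluate $\overline{\mathsf{F}}_g$ by $\T$-equivariant localization on the quasimap space $\overline{Q}_{g,0}(K_{\PP^1\ti\PP^1},(d_1,d_2))$ and then to show that every factor in the resulting graph sum already lies in the ring $\CC[L^{\pm1}][A_2]$. First I would set up the localization for the torus $\T$ acting on the base $\PP^1\ti\PP^1$, with the lift to the canonical direction pinned down by the Calabi-Yau balancing condition. The $\T$-fixed loci are indexed by decorated graphs $\Gamma$ whose vertices lie over the four fixed points of $\PP^1\ti\PP^1$ and whose edges record the multidegrees along the two rulings. The residue sum expresses $\mathsf{F}_g(q_1,q_2)$, and after the diagonal restriction $q_1=q_2=q$ the series $\overline{\mathsf{F}}_g(q)$, as a finite sum over $\Gamma$ of products of vertex factors and edge (propagator) factors: the vertex factors are Hodge-type integrals over $\oM_{g,n}$ weighted by the localized fundamental solution of the quantum differential equation, and the edge factors are built from the localized $I$-function data at the nodes.

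The heart of the argument is two closure statements. The first is that $\CC[L^{\pm1}][A_2]$ is preserved by the logarithmic derivation $q\frac{d}{dq}$. From $L=(1-16q)^{-1/4}$ one computes $q\frac{dL}{dq}=\tfrac14(L^5-L)\in\CC[L^{\pm1}]$, and the Picard--Fuchs equation satisfied by $I_1$ produces a Ramanujan-type relation expressing $q\frac{dA_2}{dq}$ as a polynomial in $A_2$ with coefficients in $\CC[L^{\pm1}]$; rearranging the definition of $A_2$ also gives $\frac{q\frac{d}{dq}C_1}{C_1}=L^4A_2+\tfrac{L^4}{4}-\tfrac12$, from which one checks that $\CC[L^{\pm1}][A_2,C_1^{-1}]$ is likewise closed under $q\frac{d}{dq}$. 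The second statement, which is the real work, is that each normalized vertex factor and each propagator appearing in the localization sum lies in $\CC[L^{\pm1}][A_2]$. This I would establish by expanding the localized fundamental-solution entries in the flat coordinate $T$ using the explicit $I$-function of $K_{\PP^1\ti\PP^1}$, and then propagating the closure under $q\frac{d}{dq}$ through the Hodge-integral vertex contributions.

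With the building blocks controlled, the assembly is formal. Since the graph sum is finite in each fixed genus and contributes to each order in $q$ through finitely many $\Gamma$, and since the ring is closed under products, part (i) follows for $g\ge2$: the no-marking potential carries only vertex and edge factors, with no leg normalizations and hence no $C_1^{-1}$. For part (ii) I would invoke the mirror map $T=\log q+I_1(q)$, which yields $\frac{\p}{\p T}=\frac{1}{C_1}\,q\frac{\p}{\p q}$, so that each $T$-derivative inserts one factor $C_1^{-1}$ together with an application of $q\frac{d}{dq}$; the closure of $\CC[L^{\pm1}][A_2,C_1^{-1}]$ under $q\frac{d}{dq}$ then gives $\frac{\p^k\overline{\mathsf{F}}_g}{\p T^k}\in\CC[L^{\pm1}][A_2,C_1^{-1}]$ by induction on $k$. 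This argument also covers $g=1$, where $\overline{\mathsf{F}}_1$ itself need not be polynomial (it contains a logarithmic contribution from the constant maps), yet every derivative $\frac{\p^k\overline{\mathsf{F}}_1}{\p T^k}$ does lie in the ring.

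I expect the main obstacle to be the second closure statement. The localized fundamental-solution entries and the Hodge-integral vertices a priori involve arbitrarily high powers of $\psi$-classes and the full quantum differential equation, and it is not obvious that they collapse into a two-generator ring. Making this precise requires the explicit Picard--Fuchs and Ramanujan relations above together with a careful induction on the graph structure controlling how $q\frac{d}{dq}$ acts across the nodes.
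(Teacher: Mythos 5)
Your skeleton matches the paper's: a localization graph sum (the paper's Proposition \ref{VE}), closure of $\CC[L^{\pm 1}][A_2]$ and $\CC[L^{\pm 1}][A_2,C_1^{-1}]$ under $\mathsf{D}=q\frac{d}{dq}$ via a Ramanujan-type relation (the paper's \eqref{drule}, equivalent to your identity $\frac{q\frac{d}{dq}C_1}{C_1}=L^4A_2+\frac{L^4}{4}-\frac{1}{2}$), and part (ii) obtained by writing $\frac{\partial}{\partial T}=\frac{1}{C_1}\,q\frac{\partial}{\partial q}$ and inducting on $k$. All of that is correct and is exactly how the paper assembles the theorem once the building blocks are controlled.

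However, the step you defer --- that every vertex and edge factor in the graph sum lies in $\CC[L^{\pm 1}][A_2]$ --- is the entire content of the theorem, and the route you sketch for it (expand the localized fundamental solution ``using the explicit $I$-function of $K_{\PP^1\ti\PP^1}$,'' then induct on the graph) does not work as stated. The $I$-function of $K_{\PP^1\ti\PP^1}$ is a two-parameter series in $(q_1,q_2)$, while $L$, $C_1$, $A_2$ and the target ring are one-parameter objects attached to the diagonal $q_1=q_2=q$; the diagonal restriction of the two-variable localization data is a sum over $d_1+d_2=d$ with no evident one-variable Picard--Fuchs or asymptotic structure of its own. The paper's essential device, absent from your proposal, is the Segre embedding $\iota:\PP^1\ti\PP^1\hookrightarrow\PP^3$ combined with the functoriality of \cite{KKP} (Proposition \ref{QHT}), which identifies the diagonally restricted theory with the one-parameter $\mathcal{O}_{\PP^3}(2)\oplus\mathcal{O}_{\PP^3}(-2)$-twisted quasimap theory of $\PP^3$: Corollary \ref{SS} gives $\overline{\mathsf{S}}((H_1+H_2)^k)=\iota^*(\mathsf{S}^{\text{tw}}(H^k))$. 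Only in that one-variable twisted theory can one run the Zagier--Zinger style analysis: the Picard--Fuchs equation \eqref{PF} produces asymptotic expansions with coefficients $R_{jk}$, and Lemmas \ref{R0}, \ref{R}, \ref{Rp} prove precisely the polynomiality you need --- the vertex contributions land in $\CC[L^{\pm 1}]$ (in particular they are independent of $A_2$, the fact that also drives Theorem \ref{Th2}), while each edge factor is at most linear in $\mathcal{X}=\mathsf{D}C_1/C_1$ through $R_{2k}=Q_{2k}-\frac{R_{1k-1}}{L}\mathcal{X}$. Without this reduction to $\PP^3$, your ``careful induction on the graph structure'' has no concrete building blocks to induct on, so the proposal has a genuine gap at its central step.
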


\begin{Thm}\label{Th2}The hololorphic anomaly equations for the quasimap invariants of $K_{\PP^1\times\PP^1}$ hold for $g \ge 2$:
\begin{align*}
\frac{1}{C_1^2}\frac{\partial \overline{\mathsf{F}}_g}{\partial A_2}=\frac{1}{2}\sum_{i=1}^{g-1}\frac{\partial \overline{\mathsf{F}}_{g-i}}{\partial T}\frac{\partial \overline{\mathsf{F}}_{i}}{\partial T}+\frac{1}{2}\frac{\partial^2\overline{\mathsf{F}}_{g-1}}{\partial T^2}\,.
\end{align*}
\end{Thm}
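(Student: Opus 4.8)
The plan is to compute $\partial\overline{\mathsf{F}}_g/\partial A_2$ directly from the $\CC^*$-localization formula on the quasimap graph space, in the spirit of the treatment of $K_{\PP^2}$. First I would realize each potential $\overline{\mathsf{F}}_g$ as a sum over the decorated graphs labelling the $\CC^*$-fixed loci: vertices carry a genus and are distributed over the torus-fixed points of $\PP^1\times\PP^1$, while edges record the degrees of the components mapping with nonzero degree. The integral over each fixed locus factors, against the inverse Euler class of the virtual normal bundle, into vertex contributions and edge contributions, and the whole sum can be organized through the fundamental solution $S$ of the associated quantum differential equation together with the $I$-function data that produce the series $L$, $C_1$, $I_1$. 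By Theorem~\ref{Th1} the resulting series lie in $\CC[L^{\pm1}][A_2]$ and $\CC[L^{\pm1}][A_2,C_1^{-1}]$, so $\partial\overline{\mathsf{F}}_g/\partial A_2$ is a well-defined element of these rings.

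The essential point is to identify $A_2$ with the \emph{propagator} of this Feynman-type sum. Concretely I would show that, in the Givental--Teleman graph expansion, each internal edge carries exactly one propagator, so that the edge factor depends on $A_2$ linearly per edge and $\partial/\partial A_2$ acts as a derivation that marks a single internal edge and strips its propagator. After passing to the mirror coordinate $T=\log q+I_1$, for which $dT/d\log q=1+q\tfrac{d}{dq}I_1=C_1$ and hence $\partial_T=\tfrac{1}{C_1}q\tfrac{d}{dq}$, the two half-edges exposed by stripping a propagator each carry a $q\tfrac{d}{dq}$-insertion. The combinatorics of which edge is cut then splits into two cases, according to whether the marked edge is separating or not.

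The two cases reproduce the right-hand side. Cutting a separating edge disconnects the graph into a genus $i$ piece and a genus $g-i$ piece, each acquiring one new half-edge; summing over $i$ and over the distribution of the remaining data assembles the product of two one-pointed genus functions, which are the $\partial_T\overline{\mathsf{F}}$'s, with the factor $\tfrac12$ coming from the symmetry of the unordered splitting. This yields $\tfrac12\sum_{i=1}^{g-1}\partial_T\overline{\mathsf{F}}_{g-i}\,\partial_T\overline{\mathsf{F}}_i$. Cutting a non-separating edge lowers the genus by one and places two half-edges on the same connected component, giving the two-pointed genus $g-1$ function $\tfrac12\partial_T^2\overline{\mathsf{F}}_{g-1}$. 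The prefactor $1/C_1^2=(dT/d\log q)^{-2}$ on the left is precisely the Jacobian converting the two $q\tfrac{d}{dq}$-insertions at the severed half-edges into the flat $\partial_T$-derivatives on the right.

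The main obstacle is the precise bookkeeping that matches the $A_2$-derivative of the vertex contributions against this edge-cutting operation with the exact constants. The difficulty is twofold: to verify that no additional $A_2$-dependence hides in the vertex contributions beyond the one propagator per edge, so that $\partial/\partial A_2$ really is the edge-marking derivation; and to confirm that the objects assembled at the cut half-edges are genuinely $\partial_T\overline{\mathsf{F}}$ and $\partial_T^2\overline{\mathsf{F}}$ rather than corrected by connection terms coming from the $q$-dependence of $C_1$. I expect this to require the dilaton- and string-type relations for the one-pointed quasimap series, which identify the half-edge insertions with $T$-derivatives, together with an induction on $g$ that feeds the polynomiality of the lower-genus potentials from Theorem~\ref{Th1} back into the edge analysis.
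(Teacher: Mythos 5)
Your proposal is correct and follows essentially the same route as the paper's proof: the torus-localization graph sum of Proposition \ref{VE}, the vanishing of the $A_2$-dependence of vertex terms (Lemma \ref{R0}) so that $\partial/\partial A_2$ acts by marking one edge and stripping its propagator (equation \eqref{Coeff}), the separating versus non-separating edge-cutting dichotomy giving the two terms on the right-hand side, the matching of the exposed half-edges with leg contributions (Proposition \ref{VEL}), and the $1/C_1^2$ Jacobian converting divisor insertions into $\partial_T$-derivatives. The only minor deviations are that the paper localizes with respect to $\mathsf{T}=(\CC^*)^4$ on $\overline{Q}_{g,n}(\PP^1\times\PP^1,(d_1,d_2))$ itself (the $\CC^*$-graph space enters only through the $\mathbf{I}$-function), and the identification of the half-edge insertions with $\partial_T\overline{\mathsf{F}}$ uses the divisor equation imported from Gromov--Witten theory via wall-crossing (quasimap theory has no direct divisor equation) rather than string/dilaton relations or an induction on $g$.
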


 The derivative of $\overline{\mathsf{F}}_g$ with respect to $A_2$ in the equation of Theorem \ref{Th2} is well-defined by (i) in Theorem \ref{Th1}. Theorem \ref{Th2} determine $\overline{\mathsf{F}}_g\in\CC[L^{\pm 1}]$ uniquely as a polynomial in $A_2$ up to a constant term in $\CC[L^{\pm 1}]$. The degree of the constant term is bounded as will be seen in the proof of Theorem \ref{Th2}. So Theorem \ref{Th2} determine $\overline{\mathsf{F}}_g$ recursively from the lower genus theory together with a finite amount of data.

 \subsection{Calabi-Yau hypersurface in toric manifold.}
 Genus one Gromov-Witten invariants are non-trivial even for Calabi-Yau manifold with dimension not necessarily equal to $3$. Dimension $4$ and $5$ cases were studied in \cite{KP,PZ}. In Section 4 we study the genus one Gromov-Witten theory of Calabi-Yau  hypersurface in toric variety. 
 
 Let $X$ be a zero loci of generic section of the anti-canonical bundle of
 $$\PP^1\times P,$$
 where $P$ is smooth Fano toric variety. As before, denote by 
 $$\mathsf{F}_1:=\sum_{\beta\in H^2(X,\QQ)}q^{\beta}\int_{[\overline{Q}_{1,0}(X,\beta)]^{\text{vir}}}1\,$$
the genus one quasimap potential function of $X$.

\begin{Conj}Let $X$ be a zero loci of general section of the anti-canonical bundle of $\PP^1\ti P$ for smooth Fano toric variety $P$.
\begin{align*}
q\frac{d}{dq}\mathsf{F}_1(q,0,\dots,0)=N^X_{1,(1,\mathbf{0})}\frac{q}{1-4q}.
\end{align*}
where $N^X_{1,(1,\mathbf{0})}$ is genus $1$, degree $(1,\mathbf{0})$ quasimap invariant of $X$. 
\end{Conj}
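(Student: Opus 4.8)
The plan is to reduce the restricted potential $\mathsf{F}_1(q,0,\dots,0)$ to a one-variable computation controlled entirely by the $\PP^1$-factor, and then to read off the series $\frac{q}{1-4q}$ from an explicit local model. First I would realize $X$ as the zero locus of a generic section of the anti-canonical bundle $\cO(2)\boxtimes(-K_P)$ on $\PP^1\times P$ and work with the quasimap theory of the ambient toric quotient twisted by this bundle (Quantum Lefschetz), so that
\begin{equation*}
\int_{[\overline{Q}_{1,0}(X,(d,\mathbf 0))]^{\vir}}1
\end{equation*}
is expressed as an integral over the ambient quasimap space $\overline{Q}_{1,0}(\PP^1\times P,(d,\mathbf 0))$ of the Euler class of $R^\bullet\pi_\ast f^\ast\cO(-K)$. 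The decisive geometric input is that setting every K\"ahler parameter of $P$ to zero forces the $P$-degree to vanish; a degree-$\mathbf 0$ quasimap to the Fano toric variety $P$ is constant, so the ambient moduli space factors (up to the usual virtual subtleties) as a product of the genus-one quasimap space of $\PP^1$ in degree $d$ with $P$ itself.

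Next I would split the twisting class accordingly. Under $\cO(-K)=\cO(2)\boxtimes(-K_P)$, with the $P$-direction frozen at a point $p$, the Euler class of $R^\bullet\pi_\ast f^\ast\cO(-K)$ separates into a factor built from $R^\bullet\pi_\ast f^\ast\cO_{\PP^1}(2)$ and a factor coming from the fibre of $-K_P$ at $p$, the latter contributing Hodge classes integrated over $P$. Because the $P$-degree is zero, this second factor is independent of $d$ and produces only an overall constant. Thus the entire $d$-dependence of $\mathsf{F}_1(q,\mathbf 0)$ is carried by the genus-one quasimap theory of $\PP^1$ twisted by $\cO(2)$, the local model of a degree-$2$ hypersurface constraint on $\PP^1$.

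Finally I would evaluate this local model. Its genus-zero fundamental solution is the hypergeometric series $\sum_d\binom{2d}{d}q^d=(1-4q)^{-1/2}$, whose Picard--Fuchs discriminant is $1-4q$; the $4$ of the conjecture is exactly this central-binomial $4$. Applying the genus-one quasimap mirror formula (equivalently, direct $\CC^\ast$-localization on the $\PP^1$-factor, summing the Hodge-integral contributions of the fixed loci) yields $\mathsf{F}_1(q,\mathbf 0)=-\tfrac{c}{4}\log(1-4q)$ up to an additive constant, for a constant $c$ equal to the $P$-contribution above. Differentiating gives $q\frac{d}{dq}\mathsf{F}_1(q,\mathbf 0)=c\,\frac{q}{1-4q}$; comparing the coefficient of $q$ identifies $c=N^X_{1,(1,\mathbf 0)}$ and proves the formula.

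The main obstacle is rigour in the factorization step for an arbitrary Fano $P$: one must control the genus-one virtual class (where $R^1\pi_\ast$ can jump and the naive hyperplane property may fail, as in Zinger's analysis), show that the frozen $P$-direction really contributes a single $d$-independent constant, and verify that no discriminant factor other than $1-4q$ survives the restriction. All of this can be made explicit by torus localization when $P=\PP^{n-1}$, which is presumably why the general statement is left as a conjecture.
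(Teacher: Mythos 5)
First, a framing point: the statement you set out to prove is left as a \emph{Conjecture} in the paper. The paper proves it only for $P=\PP^{n-1}$ (via Theorem \ref{Th3}, and more generally Theorem \ref{Th4}), and it does so not by a product/factorization argument but by combining the functoriality identity \eqref{Func}, torus localization, and the genus-one mirror formula of \cite{KL} (quoted as Theorem \ref{G1}); the function $\frac{q}{1-4q}$ then emerges from closed-form solutions of the Picard--Fuchs equation for the asymptotic data $L_i$, $a_i$, $b_i$ in the limit $\boldsymbol{\lambda}\to 0$, together with a separate verification that the loop contribution vanishes. So there is no proof of the general statement in the paper for your argument to be measured against; the question is whether your sketch closes the gap, and it does not.

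The genuine gap is exactly the step you yourself flag as an ``obstacle'': that the frozen $P$-direction contributes a single $d$-independent multiplicative constant. Even granting that a degree-$\mathbf{0}$ quasimap to $P$ is constant, so that $\overline{Q}_{1,0}(\PP^1\times P,(d,\mathbf{0}))\cong \overline{Q}_{1,0}(\PP^1,d)\times P$, the obstruction theory does not factor as a product in genus one: deforming the constant map to $P$ produces the extra obstruction bundle $\mathbb{E}^\vee\boxtimes TP$ ($\mathbb{E}$ the Hodge bundle), and the hypersurface twisting class is the Euler class of $\bigl(R^\bullet\pi_*f_1^*\cO_{\PP^1}(2)\bigr)\boxtimes\cO_P(-K_P)$, which expands into a sum of terms $c_j\bigl(R^\bullet\pi_*f_1^*\cO_{\PP^1}(2)\bigr)\,c_1(-K_P)^k$. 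The integral over $P$ therefore couples \emph{all} Chern classes of the $d$-dependent sheaf on $\overline{Q}_{1,0}(\PP^1,d)$, together with Hodge classes, to intersection numbers of $P$ --- not just the top Euler class times a constant; and on top of this $R^1\pi_*$ jumps on boundary strata where the degree moves onto rational tails, so the $\PP^1$-side ``local model'' is not literally the naive twisted theory either. That all of these couplings assemble, for \emph{every} smooth Fano toric $P$, into a single multiple of the one function $\frac{q}{1-4q}$ is precisely the content of the conjecture; taking it as the ``decisive geometric input'' assumes what is to be proven. This is already visible in the paper's $P=\PP^{n-1}$ computation: the coefficient $-\frac{n(n^2-n+2)}{12}$ arises from a nontrivial sum of vertex terms $c_i(\lambda)L_i$ over all fixed points of $P$, and the vanishing of the loop terms requires its own argument rather than being automatic. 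Your remaining ingredients (the hypergeometric series $I_0=(1-4q)^{-1/2}$, the genus-one mirror formula, the identification of the constant via the degree-$(1,\mathbf{0})$ invariant) are consistent with the paper, but without the factorization step they do not constitute a proof.
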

We prove the conjecture for $P$ equal to
 \begin{align*}
 \PP^{n-1} \,\, \text{for}\, n\ge 2.
 \end{align*}

\noindent More generally, we prove the following theorem in Section 4.

\begin{Thm} \label{Th4}
 Let $X$ be a zero loci of general section of anti-canonical bundle of $\PP^{m-1}\ti\PP^{n-1}$. Then we have the following formula for genus one quasimap potential function of $X$.
 \begin{multline*}q\frac{d}{dq}\mathsf{F}_1(q,0)=\frac{n}{48}(3m^2-11m-n^2+n+8)\frac{m^m q}{1-m^mq}\\-\frac{n}{2}\sum_{k=0}^{m-3}\binom{m-1-k}{2}\frac{q\frac{d}{dq}C_k}{C_k}\,.
 \end{multline*}
\end{Thm}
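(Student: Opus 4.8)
The plan is to reduce the genus one quasimap potential of $X$ to a torus-localization computation on the ambient toric variety $\PP^{m-1}\ti\PP^{n-1}$. First I would pass from the hypersurface to the ambient space equipped with a $p$-field: by the quasimap analogue of the Chang--Li / Kim--Oh correspondence, the invariant $\int_{[\overline{Q}_{1,0}(X,\beta)]^{\vir}}1$ equals, up to a sign determined by $\beta$, the corresponding cosection-localized $p$-field quasimap invariant of $\PP^{m-1}\ti\PP^{n-1}$ twisted by the anticanonical bundle $\mathcal{O}(m,n)$. This is essential because $X$ is not torus invariant, whereas the ambient space with $p$-fields is. Restricting the second K\"{a}hler parameter to zero keeps only the classes $\beta=(d,0)$, for which the quasimaps are constant in the $\PP^{n-1}$ factor; the $\PP^{n-1}$ direction then enters only through its $n$ fixed points and through the restriction of $\mathcal{O}(m,n)$, which is the source of the overall factor $n$ appearing in the formula.

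Next I would run torus localization on the genus one quasimap graph space of the ambient space, following the method used for local surfaces in \cite{KL}. The fixed loci organize into a vertex--edge graph sum over the $m$ torus fixed points of $\PP^{m-1}$. The genus zero input is the twisted $I$-function of $X$ restricted to $q_2=0$; its Birkhoff factorization produces the hypergeometric series $I_0,I_1,\dots$ whose successive ratios define the functions $C_k$, while the small $I$-function is responsible for $L=(1-m^mq)^{-1/m}$ and for the appearance of the discriminant $1-m^mq$. I expect the fixed loci to split into two types: a \emph{principal} locus on which the genus one component is contracted over a single fixed point, contributing a Hodge integral over $\overline{M}_{1,1}$, and \emph{broken} loci on which an elliptic tail is attached along an edge.

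Assembling the contributions, the principal loci should produce the first term: the contracted genus one vertex contributes the integral $\int_{\overline{M}_{1,1}}\psi_1$ (equivalently $\lambda_1$), which supplies the $\tfrac{1}{24}$-type rational constants, and summing the normal-bundle weights over the $m$ fixed points together with the $n$ directions of $\PP^{n-1}$ yields the degree series $\tfrac{m^mq}{1-m^mq}$ with the polynomial constant $\tfrac{n}{48}(3m^2-11m-n^2+n+8)$. The broken loci should produce the second term: each edge carries a genus zero normalization factor built from the $I_k$, so that after applying $q\tfrac{d}{dq}$ to $\mathsf{F}_1$ one obtains the logarithmic derivatives $\tfrac{q\frac{d}{dq}C_k}{C_k}$, with the weight $\binom{m-1-k}{2}$ arising from the number of ways of distributing the remaining tangent weights among the $m-1-k$ coordinates not already used; in particular the sum is empty for $m=2$, consistent with the $\PP^1$ factor carrying no higher $C_k$ structure.

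The main obstacle is twofold. Analytically, quantum Lefschetz fails in genus one, so the passage to the ambient theory must be made precise through the $p$-field formalism and the cosection-localized virtual class, and the broken-locus contributions of the genus one graph sum must be controlled rather than discarded. Combinatorially, the hardest step is the exact evaluation of the constant $\tfrac{n}{48}(3m^2-11m-n^2+n+8)$ and of the weights $\binom{m-1-k}{2}$: this requires matching the genus one vertex Hodge integrals against the equivariant normal-bundle contributions and resumming the hypergeometric edge factors into the closed forms $L$ and $C_k$. As a consistency check I would verify that setting $m=2$ (so that $\PP^{m-1}=\PP^1$, $m^m=4$, and the $C_k$ sum is empty) recovers exactly the Conjecture, with the constant $\tfrac{n}{48}(3m^2-11m-n^2+n+8)$ reproducing the degree-one invariant $N^X_{1,(1,0)}$, and I would further test the identity against low-degree quasimap invariants and the genus zero mirror map.
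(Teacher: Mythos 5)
Your reduction step rests on a premise that is false in the quasimap setting, and repairing it would sink the rest of your plan as written. You assert that the $p$-field/cosection-localization formalism is \emph{essential} because quantum Lefschetz fails in genus one. That failure is a Gromov--Witten phenomenon caused by rational tails and contracted elliptic components; under $(0+)$-stability rational tails are forbidden, so a genus one quasimap domain coincides with its genus one core, on which the pulled-back line bundle inducing $\mathcal{O}(m,n)$ has positive degree and vanishing $H^1$. Consequently the pushforward is a genuine vector bundle and the functoriality of \cite{KKP} gives exactly the identity \eqref{Func} for $g=0,1$, which is what the paper uses; no cosection is needed. Your alternative is not only unnecessary but unworkable as described: the cosection is built from the generic defining section $s$ of $X$, which is \emph{not} $\mathsf{T}$-invariant, so the cosection-localized virtual class carries no torus action, and ``running torus localization'' on the $p$-field theory is precisely the step that fails (this is why $p$-field computations in the literature require substantial extra machinery rather than a direct graph sum). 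The paper's route --- Euler class of the obstruction bundle on $\overline{Q}_{1,0}(\PP^{m-1}\ti\PP^{n-1},(d,0))$, where everything is equivariant --- avoids this entirely.

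Beyond the reduction, your computational core is vague where the paper is precise, and wrong in one structural respect. The paper does not re-derive the genus one localization structure; it applies the closed formula of Theorem \ref{G1} from \cite{KL}, splitting $q\frac{\partial}{\partial q}\mathsf{F}_1$ into vertex terms (built from $\mathsf{R}_{i0}$ and $c_i(\lambda)\mathsf{U}_i$) and loop terms (built from $\mathsf{V}_{ii}$), and then evaluates both via the Picard--Fuchs equation, the asymptotic data $L_{ki},a_{ki},b_{ki}$, Birkhoff factorization for the $\mathbf{S}$-operators, and the limit $\boldsymbol{\lambda}\to 0$, which is legitimate because $\mathsf{F}_1$ is independent of the equivariant parameters --- a justification your sketch never supplies. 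Your dichotomy of fixed loci is also misstated: a graph with an elliptic tail attached along an edge is still a genus one \emph{vertex} contribution, while the $C_k$ terms arise from graphs with a cycle, i.e.\ from the self-pairing $\mathsf{V}_{ii}$ of the $\mathbf{S}$-operator at a single fixed point. Finally, your clean attribution (first term from ``principal'' loci, second from ``broken'' loci) contradicts the actual computation: the coefficient $\frac{n}{48}(3m^2-11m-n^2+n+8)$ of $\frac{m^mq}{1-m^mq}$ is the sum of a vertex part $\frac{n}{48}(-n^2+n-2)$ and a loop part $\frac{n}{48}(3m-5)(m-2)$, and the $(L-1)$ terms produced by the two types cancel only against each other. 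Your $m=2$ consistency check against Theorem \ref{Th3} and the Conjecture is correct, but as it stands the proposal neither reaches a computable ambient theory nor contains the formula that would assemble the constants.
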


\noindent Here $C_k(q)$ is some series in $q$. See \eqref{Ckg} for the definition of $C_k(q)$.

\subsection{Plan of the paper.} After a review of the moduli space of quasimaps in Section 1 and the genus zero theory in Section 2, Theorem 1 and 2 are proven in Section 3 using torus localization formula for $K_{\PP^1\ti\PP^1}$ combined with quantum Lefschetz hyperplane section theorem. Theorem 4 is proven in Section 4 applying the result of \cite{KL}.

\subsection{Notations.}\label{No} For the series which are defined in Section 2, we will use the following conventions.

Let $\mathbf{Z}\in\CC[[t_1,t_2,q_1,q_2]]$ be the series in $t_1,t_2,q_1,q_2$.
We define a new series $\mathsf{Z},\overline{\mathsf{Z}}$ and $Z$ by

\begin{align}\label{notations}
  \nonumber  \mathsf{Z}=&\mathbf{Z}|_{t_1=0,t_2=0}\in\CC[[q_1,q_2]]\,,\\
    \overline{\mathsf{Z}}=&\mathsf{Z}|_{q_1=q_2=q}\in\CC[[q]]\,,\\
   \nonumber Z=&\mathsf{Z}|_{q_1=q,q_2=0}\in\CC[[q]]\,.
\end{align}

\subsection{Acknowledgments} 
I am very grateful to  I.~Ciocan-Fontanine, H.~Iritani, B.~Kim, A.~Klemm, M. C.-C. Liu, R. ~Pandharipande, E.~Scheidegger, J. ~Shen and Z. Zong
for discussions over the years 
about the moduli space of quasimap and the invariants of Calabi-Yau geometries. I was supported by the grant ERC-2012-AdG-320368-MCSK.

\
\

\section{Quasimap invariants and B-model}

Let $\tilde{X}$ be a affine algebraic variety with an action by a reductive algebraic group $\mathsf{G}$. Denote by X the GIT quotient
$$\tilde{X}/\!\!/_{\!\theta}\mathsf{G}$$
with some choice of a character $\theta$ of $\mathsf{G}$. Let $L_\theta$ be the $\mathsf{G}$-linearized line bundle on $\tilde{X}$ determined by $\theta$. Let
$$\tilde{X}^s=\tilde{X}^s(\theta)\,\,\, \text{and}\,\,\, \tilde{X}^{ss}=\tilde{X}^{ss}(\theta)$$
be the open subsets of stable (resp. semistable) points with respect to $\theta$. We assume
\begin{itemize}
    \item [(i)] $\tilde{X}^s=\tilde{X}^{ss}$\,,
    \item [(ii)] $\tilde{X}^s$ is nonsingular,
    \item [(iii)] $\mathsf{G}$ acts freely on $\tilde{X}^s$.
\end{itemize}

Fix integers $n,g\ge0$ and a class $\beta\in \text{Hom}_\ZZ(\text{Pic}^\mathsf{G}(\tilde{X}),\ZZ)$. With this setup, there are quasimap moduli space

$$\overline{Q}_{g,n}(X,\beta)$$
with the canonical virtual fundamental class $[\overline{Q}_{g,n}(X,\beta)]^{vir}$ (\cite{CKM}). The moduli space $\overline{Q}_{g,n}(X,\beta)$ parameterize the data
$$(\,\mathcal{C},\{p_1,p_2\,\dots,p_n\},\mathcal{P},u\,)$$
where
\begin{itemize}
    \item [$\bullet$] $(\mathcal{C},\{p_1,p_2,\dots,p_n\})$ is a connected, at most nodal, $n$-pointed projective curve of genus $g$,
    \item [$\bullet$] $\mathcal{P}$ is a principal $\mathsf{G}$-bundle on $\mathcal{C}$,
    \item [$\bullet$] $u$ is a section of the induced fiber bundle $\mathcal{P}\ti_{\mathsf{G}}\tilde{X}$ with fiber $\tilde{X}$ on $\mathcal{C}$ such that $(\mathcal{P},u)$ is of class $\beta$\,,
\end{itemize}
satisfying the following generic nondegeneracy condition and stability condition:

\begin{itemize}
    \item [(i)] There is a finite (possibly empty) set $\mathcal{B}\subset \mathcal{C}$ such that for every $p\in\mathcal{C} \setminus \mathcal{B}$ we have $u(p)\in \tilde{X}^{s}$,
    \item [(ii)] The set $\mathcal{B}$ is disjoint from the set of the nodes and markings on $\mathcal{C}$,
    \item [(iii)] The line bundle $\omega_{\mathcal{C}}(\sum_{i=1}^n p_i)\ot\mathcal{L}^\epsilon$ is ample for every rational number $\epsilon >0$, where
    $$\mathcal{L}:=u^*(\mathcal{P}\ti_{\mathsf{G}}L_\theta)\,.$$
\end{itemize}

\

For Calabi-Yau 3-fold $X$, define the genus $g$ quasimap potential function

$$\mathsf{F}_g(q)=\sum_{\beta}q^\beta\int_{[\overline{Q}_g(X,\beta)]^{vir}}1\,.$$
The integral in the above equation gives nontrivial number since the virtual dimension of $[\overline{Q}_g(X,\beta)]^{vir}$ is $0$ for Calabi-Yau 3-fold.

In \cite{CKg0,CKg}, relationship between $\mathsf{F}_g$ and Gromov-Witten potential function

$$\mathsf{F}^{GW}_g(q)=\sum_{\beta}q^\beta\int_{[\overline{M}_g(X,\beta)]^{vir}}1$$
are studied. Roughly, $\mathsf{F}_g$ are obtained from $\mathsf{F}^{GW}_g$ by change of coordinate $T^X$ associated to $X$ called mirror map. See \cite{CKg} for precise statement. 

In physics, to the A-model Gromov-Witten invariants of a Calabi-Yau 3-fold X is conjecturally associated the B-model theory of a mirror Calabi-Yau 3-fold $\widehat{X}$ (\cite{BCOV,BKMP}). The A-model genus $g$ Gromov-Witten potential function $\mathbf{F}^{A}_g$ which is eqaul to the Gromove-Witten potential function $\mathsf{F}^{GW}_g$ defined above, is  conjecturally related by string theoretic B-model genus $g$ potential function\footnote{$\mathbf{F}^B_g$ is some holomorphic limit of non-holomorphic potential function (\cite{BCOV,BKMP}).} $\mathbf{F}^B_g$ by the same mirror map $T^X$. Therefore the quasimap invariants of $X$ are expected to exactly equal the B-model invariants of the mirror $\widehat{X}$. In this paper we study the string theoretic B-model theory directly via quasimap theory.

\section{Genus zero theory}

We review here the genus $0$ quasimap theory of some possibly non-compact Calabi-Yau manifold $X$. In our paper X will be either total space of canonical bundle or zero loci of general section of anti-canonical bundle over some toric variety $Y$. In either cases, we can study the geometry of $X$ via the Euler class of the obstruction bundle,
\begin{align}\label{Obs}
    e(\text{Obs(X)}),
\end{align}
associated to X on the moduli space $\overline{Q}_{g,n}(Y,\beta)$. Here we also assume GIT representation 
$$\tilde{Y}/\!\!/_{\!\theta}\mathsf{G}$$
of $Y$ with appropriate choice of character $\theta$ and affine variety $\tilde{Y}$. The obstruction bundle in \eqref{Obs} come from some vector bunlde $\tilde{E}$ on $[\tilde{Y}/\mathsf{G}]$ associated to geometry of $X$. Let complex torus $\mathsf{T}$ act on $\tilde{Y}$. We assume that this action commutes with the $\mathsf{G}$ action on $\tilde{Y}$. Assume furthermore, the induced action on $Y$ has only finitely many $0$-dimensional and $1$-dinmensional $\mathsf{T}$-orbits.

\subsection{First correlators.}
We introduce several correlators defined via virtual fundamental class
$$[\overline{Q}_{0,n}(X,\beta)]^{vir}=e(\text{Obs(X)})\cap[\overline{Q}_{0,n}(Y,\beta)]^{vir}$$.
For $\gamma\in H^*_{\mathsf{T}}(X)$, define following series,
\begin{align*}
 &\lan \gamma \psi^{a_1},\dots,\gamma_n \psi^{a_n}\ran_{0,n,\beta} =\int_{[\overline{Q}_{0,n}(X,\beta)]^{vir}}  \prod_{i=1}^{n}\text{ev}^*_i(\gamma_i)\psi^{a_i}\,,\\
 &\lan \gamma \psi^{a_1},\dots,\gamma_n \psi^{a_n}\ran_{0,n}=\sum_{d\ge 0}q^{\beta}\lan \gamma \psi^{a_1},\dots,\gamma_n \psi^{a_n}\ran_{0,n,\beta}\,.
\end{align*}

\subsection{Infinitesimal markings.}\label{lightm} Moduli space of stable quasimaps can be considered with $n$ ordinary (weight 1) markings and $k$ infinitesimal (weight 0+) markings,
$$\overline{Q}^{0+}_{0,n|k}(X,d)\,.$$
Let $\gamma_i\in H^*_{\mathsf{T}}(X)$ be equivariant cohomology classes, and let
$$\delta_j\in H^*_{\mathsf{T}}([\tilde{X}/\mathsf{G}])$$
be classes on the stack quotient. Following the notation of \cite{KL}, define following series,
\begin{multline*}
    \lan \gamma_1\psi^{a_1},\dots,\gamma_n\psi^{a_n};\delta_1,\dots,\delta_k\ran^{0+}_{g,n|k,d}=\\
    \int_{[\overline{Q}^{0+}_{g,n|k}]^{vir}}\prod_{i=1}^n \text{ev}^*_i(\gamma_i)\psi_i^{a_i}\cdot\prod_{j=1}^k\widehat{\text{ev}}^*_j(\delta_j)\,, 
\end{multline*}
\begin{multline*}
    \lan\lan\gamma_1\psi^{a_1},\dots,\gamma_n\psi^{a_n}\ran\ran^{0+}_{0,n}=\sum_{d \ge0}\sum_{k\ge 0}\frac{q^d}{k!}\lan\gamma_1\psi^{a_1},\dots,\gamma\psi^{a_n};t,\dots,t\ran^{0+}_{0,n|k,d}\,,
\end{multline*}
where, $t\in H^*_{\mathsf{T}}([\tilde{X}/\mathsf{G}])$ in the second series.

For each $\mathsf{T}$-fixed point $p_i\in Y$, let $\phi_i$ be the basis of $H^*_{\mathsf{T}}(Y)\ot \CC(\lambda)$ such that

\begin{align*}
 &\phi_i|_{p_j} = \left\{ \begin{array}{rl} 1 & \text{if } i =j   \\
                                            0 & \text{if } i \ne j  \, . \end{array}\right. 
\end{align*}

Let $\phi^i$ be the dual basis with respect to the {\em E-twisted} $\mathsf{T}$-equivariant Poincare pairing, i.e.,
\begin{align*}
 &\int_Y\phi_i \phi^j e^{\mathsf{T}}(\tilde{E}|_Y) = \left\{ \begin{array}{rl} 1 & \text{if } i =j   \\
                                            0 & \text{if } i \ne j  \, , \end{array}\right. 
\end{align*}
where $e^{\mathsf{T}}(\tilde{E}|_Y)$ is the $\mathsf{T}$-equivariant Euler class of $\tilde{E}|_Y$.
Note that 
\begin{align*}
    \phi^i=e_i \phi_i\,\,,\,\,\text{where}\,\,e_i:=\frac{e^{\mathsf{T}}(T_{p_i}Y)}{e^{\mathsf{T}}(\tilde{E}|_{p_i})}
\end{align*}

Following two series play the important role throughout the paper.
\begin{align*}
 &\mathbf{S}_i(\gamma)=e_i\lan\lan\frac{\phi_i}{z-\psi},\gamma \ran\ran^{0+}_{0,2}\, ,\\
 &\mathbf{V}_{ij}=\lan\lan \frac{\phi_i}{x-\psi},\frac{\phi_j}{y-\psi}\ran\ran^{0+}_{0,2} \, .
\end{align*}
We also write

$$\mathbf{S}(\gamma):=\sum_i \phi_i \mathbf{S}_i(\gamma)\,.$$

We recall the following basic result proven in \cite{CKg},

\begin{align*}
 e_i \mathbf{V}_{ij}e_j=\frac{\sum_{k=0}\mathbf{S}_i(\phi_k)|_{z=x}\mathbf{S}_j(\phi^k)|_{z=y}}{x+y}.
\end{align*} 
 
 Associated to each $\mathsf{T}$-fixed point $p_i \in Y$, there is a special $\mathsf{T}$-fixed point locus,
 \begin{align}\label{FL}
  \overline{Q}^{0+}_{0,n|k}(Y,\beta)^{\mathsf{T},p_i}\subset \overline{Q}^{0+}_{0,n|k}(Y,\beta), 
 \end{align}
where all markings lie on a single connected genus $0$ domain component contracted to $p_i$. Denote by Nor the euqivariant normal bundle of $\overline{Q}_{0,m}(Y,\beta)^{\mathsf{T},p_i}$ with respect to the embedding \eqref{FL}. Define following local correlators
\begin{multline*}
 \lan \gamma_1 \psi^{a_i},\dots,\gamma_n \psi^{a_n};\delta_1,\dots,\delta_k\ran^{0+,p_i}_{0,n|k,\beta}=\\
 \int_{[\overline{Q}^{0+}_{0,n|k}(Y,\beta)]^{\mathsf{T},p_i}} \frac{e(\text{Obs})}{e(\text{Nor})}\cdot \prod_{i=1}^n \text{ev}_i^*(\gamma_i)\psi_i^{a_i}\cdot \prod_{j=1}^{k}\widehat{\text{ev}}^*_j(\delta_j)\,,
\end{multline*}
 
\begin{multline*}
    \lann \gamma_1\psi^{a_1},\dots,\gamma_n\psi^{a_n} \rann^{0+,p_i}_{0,n}=\\
    \sum_{d \ge 0}\sum_{\beta \ge 0} \frac{q^\beta}{k!}\lan\gamma_1\psi^{a_1},\dots,\gamma_n\psi^{a_n};t,\dots,t\ran\,.
\end{multline*}

\subsection{Quasimap graph spaces and I-functions.}
\subsubsection{Graph quasimap space.} We review here the constructions of wighted quasimap graph spaces in \cite{BigI}. In this paper, we only consider the case of $(0+,0+)$-stability in \cite{BigI}. See \cite{BigI} for more general constructions. 
 
Following \cite{BigI}, denote by
\begin{align*}
    \mathsf{QG}^{0+,0+}_{g,n|k,d}([\tilde{Y}/\mathsf{G}])\,
\end{align*}
a $(0+,0+)$-stable quasimap graph space. The graph space parametrizes following data
$$((C,\mathbf{x},\mathbf{y}),(f,\rho):C\rightarrow[\tilde{Y}/\mathsf{G}]\times[\CC^2/\CC^*])\,.$$
By the definition of stability condition of the moduli space, $\rho$ is a regular morphism to 
$$\PP^1=\CC^2/\CC^*$$
of degree 1. Therefore, the domain curve $C$ has unique distinguished irreducible component $C_0$ isomorphic to $\PP^1$ via $\rho$. The {\em standard} $\CC^*$-action defined by

\begin{align}\label{act}
t\cdot[\xi_0,\xi_1]=[t\xi_0,\xi_1]\,,\,\,\, \text{for}\,t\in\CC^*\,,\,[\xi_0,\xi_1]\in\PP^1\,,
\end{align}
induces a $\CC^*$-action on $\mathsf{QG}^{0+,0+}_{g,n|k,\beta}([\tilde{Y}/\mathsf{G}])$

Denote by $z$ the weight of $\CC^*$ acting on a point,
$$H^*_{\CC^*}(\text{Spec}(\CC))=\CC[z]\,.$$
Note that $z$ is equal to the $\CC^*$-equivariant first Chern class of the tangent space at $0\in\PP^1$ with respect to the action \eqref{act}, i.e.,
$$z=c_1(T_0\PP^1)\,.$$
 
 The $\mathsf{T}$-action on $Y$ induces a $\mathsf{T}$-action on $\mathsf{QG}^{0+,0+}_{g,n|k,\beta}([Y/\mathsf{G}])$ which commutes with the $\CC^*$-action obtained from the distinguished domain component $C_0$. Therefore, we have a $\mathsf{T}\ti\CC^*$-action on the quasimap graph space and $\mathsf{T}\times\CC^*$-equivariant evaluation maps
 \begin{align*}
     &\text{ev}_i : \mathsf{QG}^{0+}_{g,n|k,\beta}([\tilde{Y}/\mathsf{G}])\rightarrow Y\,, &i=1,\dots,m\,,\\
     &\widehat{\text{ev}}_j : \mathsf{QG}^{0+}_{g,n|k,\beta}([\tilde{Y}/\mathsf{G}])\rightarrow [\tilde{Y}/\mathsf{G}]\,,& j=1,\dots,k\,.
 \end{align*}

\subsubsection{I-functions.} Among the fixed loci for the $\CC^*$-action on 
$$\mathsf{QG}^{0+}_{g,n|k,\beta}([\tilde{Y}/\mathsf{G}])\,,$$
there is a distinguished subset $\mathsf{Q}_{k,\beta}$ for which all the markings and the entire curve class $\beta$ lie over $0\in \PP^1$. The locus $\mathsf{Q}_{k,\beta}$ are eqiuped with a natural {\em proper} evaluation map $ev_{\bullet}$ obtained from the generic point of $\PP^1$:
$$\text{ev}_{\bullet}:\mathsf{Q}_{k,\beta}\rightarrow Y\,.$$

More explicitly,
$$\mathsf{Q} =\mathsf{Q}_\beta \ti 0^k \subset \mathsf{Q}_\beta\ti (\PP^1)^k\,,$$
where $\mathsf{Q}$ is the $\CC^*$-fixed locus in $\mathsf{QG}^{0+}_{0,0,\beta}([\tilde{Y}/\mathsf{G}])$ for which the class $\beta$ is concentrated only over $0\in\PP^1$. The locus $\mathsf{Q}_\beta$ parameterizes quasimaps of class $\beta$,
$$f:\PP^1\rightarrow[\tilde{Y}/\mathsf{G}]\,,$$
with a base-point of length $\beta$ at $0\in\PP^1$. The restriction of $f$ to $\PP^1\setminus \{0\}$ is a constant map to $\PP^2$ which defines the evaluation map $\text{ev}_{\bullet}$

Following \cite{BigI}, we define the big $\mathbf{I}$-function as the generating function for the push-forward via $ev_{\bullet}$ of localization residue contributions of $\mathsf{Q}_{k,\beta}$. For $\mathbf{t}\in H^*_{\mathsf{T}}([\tilde{Y}/\mathsf{G}])\ot_{\CC}\CC[z]$, let
\begin{align*}
    \text{Res}_{\mathsf{Q}_{k,\beta}}(\mathbf{t}^k)&=\prod_{j=1}^k\widehat{\text{ev}}^*_j(\mathbf{t})\cap\text{Res}_{Q_{k,\beta}}[\mathsf{QG}^{0+}_{0,0|k,\beta}([\tilde{Y}/\mathsf{G}])]^{vir}\\
    &=\frac{\prod_{j=1}^k\widehat{\text{ev}}^*_j(\mathbf{t})\cap [\mathsf{Q}_{k,\beta}]^{vir}}{e(\text{Nor}^{vir}_{\mathsf{Q}_{k,\beta}})}
\end{align*}
where $\text{Nor}^{vir}_{\mathsf{Q}_{k,\beta}}$ is the virtual normal bundle.

\begin{Def}\label{BigI} The big $I$-function for the $(0+,0+)$-stability condition is
\begin{align*}
\mathbf{I}(q,\mathbf{t},z)=\sum_{\beta \ge 0}\sum_{k \ge 0}\frac{q^{\beta}}{k!}\text{ev}_{\bullet *}(\text{Res}_{\mathsf{Q}_{k,\beta}}(\mathbf{t}^k))\,.
\end{align*}
\end{Def}

\subsubsection{Birkhoff factorization.} Here we review the procedures in \cite{KL} to obtain $\mathbf{S}$-operators from $\mathbf{I}$-function.

For $\gamma\in H^*_{\mathsf{T}}(Y)$, $\tilde{\gamma}\in H^*_{\mathsf{T}}([\tilde{Y}/\mathsf{G}])$ denotes a lift of $\gamma$, i.e., $\tilde{\gamma}|_Y=\gamma$. We have the following result from \cite[Section 2.3]{KL}.

\begin{Prop} There are unique coefficients $a_i(z,q)\in\CC(\lambda)[z][[q]]$ such that
\begin{align*}
    \sum_i a_i(z,q) z\partial_{\phi_i}\mathbf{I}=\gamma+\mathcal{O}(\frac{1}{z})\,.
\end{align*}
Furthermore LHS coincides with $\mathbf{S}(\gamma)$.
\end{Prop}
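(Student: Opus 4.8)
The proposition claims two things: existence and uniqueness of coefficients $a_i(z,q) \in \CC(\lambda)[z][[q]]$ with $\sum_i a_i(z,q)\, z\partial_{\phi_i}\mathbf{I} = \gamma + \mathcal{O}(1/z)$, and the identification of the left-hand side with the $\mathbf{S}$-operator $\mathbf{S}(\gamma)$. Let me think about how each piece would go.

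For existence and uniqueness: the key structural input is the asymptotic expansion of $\mathbf{I}$ in powers of $z$. By construction $\mathbf{I}(q,\mathbf{t},z)$ has a leading behavior of the form $\mathbf{I} = z\cdot(\text{something}) + (\text{unit class}) + \mathcal{O}(1/z)$, and more importantly the derivatives $z\partial_{\phi_i}\mathbf{I}$ form, after expansion in $1/z$, a collection of $H^*_{\mathsf{T}}(Y)$-valued series whose leading $z$-coefficients span the cohomology. Let me set up the plan around this.

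First I would expand $z\partial_{\phi_i}\mathbf{I}$ as a Laurent-type series in $z^{-1}$ with coefficients in $H^*_{\mathsf{T}}(Y)\ot\CC(\lambda)[[q]]$, writing $z\partial_{\phi_i}\mathbf{I} = \sum_{j} \phi_j\, m_{ij}(z,q)$ for a matrix $(m_{ij})$ whose entries lie in $\CC(\lambda)[z][[q]]$ modulo the expansion in $1/z$. The defining equation $\sum_i a_i\, z\partial_{\phi_i}\mathbf{I} = \gamma + \mathcal{O}(1/z)$ then becomes, upon matching coefficients of each $\phi_j$ and each nonnegative power of $z$, a triangular linear system for the $a_i$. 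The crucial point is that the leading term (the $z^0$ part at $q=0$) of the matrix $(m_{ij})$ is invertible: at $q=0$ the $I$-function degenerates to $\phi_i$ itself, so $z\partial_{\phi_i}\mathbf{I}|_{q=0} = \phi_i + \mathcal{O}(1/z)$, giving the identity matrix at leading order. Invertibility at $q=0$ together with the $q$-adic completeness of $\CC(\lambda)[z][[q]]$ lets me solve for the $a_i$ order by order in $q$ by a standard inductive argument, and the triangular structure in powers of $z$ forces uniqueness. This is essentially Birkhoff factorization: one is removing the positive-$z$ tail of a section of the Lagrangian cone and normalizing the leading term to $\gamma$.

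Second, for the identification with $\mathbf{S}(\gamma)$: I would invoke that $\mathbf{S}(\gamma)$ is characterized by the same two properties. The $\mathbf{S}$-operator, by its definition via $\mathbf{S}_i(\gamma)=e_i\lann \frac{\phi_i}{z-\psi},\gamma\rann^{0+}_{0,2}$, lies on the Lagrangian cone of the theory and has the normalized asymptotics $\mathbf{S}(\gamma)=\gamma+\mathcal{O}(1/z)$. Since the big $I$-function $\mathbf{I}$ and its $\phi_i$-derivatives generate (the tangent spaces to) the same cone, any element of the form $\sum_i a_i\, z\partial_{\phi_i}\mathbf{I}$ with the stated normalization must coincide with the unique cone element normalized to $\gamma + \mathcal{O}(1/z)$, namely $\mathbf{S}(\gamma)$. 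Concretely I would cite the relevant result of \cite{KL} and \cite{CKg} expressing $\mathbf{S}$ as a Birkhoff factorization of $\mathbf{I}$, and check that the two normalizations agree; the matching uses the string/divisor-type equations and the fact that $z\partial_{\phi_i}\mathbf{I}$ records the derivatives of the $J$-function along the cone.

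The main obstacle I expect is verifying the invertibility of the leading coefficient matrix $(m_{ij})$ over $\CC(\lambda)$ — that is, checking rigorously that $\{z\partial_{\phi_i}\mathbf{I}\}$ has leading $z^0$-terms forming a basis of $H^*_{\mathsf{T}}(Y)\ot\CC(\lambda)$, rather than just at $q=0$. Away from $q=0$ one must argue that the $q$-deformation does not destroy this, which follows from the $q$-adic triangularity but requires care in tracking how the mirror-map corrections enter. The identification step is comparatively soft once existence/uniqueness is in place, since uniqueness reduces it to matching the leading normalization, but one must be sure the coefficient ring $\CC(\lambda)[z][[q]]$ is the correct home for the $a_i$ and that no denominators in $z$ are forced by the procedure.
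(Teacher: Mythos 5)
The first thing to note is that the paper contains no proof of this Proposition: it is imported wholesale from \cite[Section 2.3]{KL} (``We have the following result from [KL, Section 2.3]''), so the only meaningful comparison is with the argument of the cited source, and that argument is exactly the one you outline --- $q$-adic Birkhoff factorization for existence and uniqueness, plus the Lagrangian cone formalism for the identification with $\mathbf{S}(\gamma)$. Your existence/uniqueness half is sound: since $z\partial_{\phi_i}\mathbf{I}|_{q=0}=\phi_i+\mathcal{O}(1/z)$ and each coefficient of $q^{\beta}$ in $\mathbf{I}$ carries only finitely many nonnegative powers of $z$, the order-by-order induction in $q$ produces unique coefficients polynomial in $z$. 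Moreover, the ``main obstacle'' you worry about at the end --- invertibility of the leading matrix away from $q=0$ --- is not an obstacle at all: because $\CC(\lambda)[z][[q]]$ is $q$-adically complete, the induction only ever uses invertibility modulo $q$, which you have already established; no further statement about the $q$-deformed matrix is needed.

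The half that needs repair is the identification step. You propose to ``cite the relevant result of \cite{KL} and \cite{CKg} expressing $\mathbf{S}$ as a Birkhoff factorization of $\mathbf{I}$'' --- but that result \emph{is} the Proposition being proved, so as written this is circular. The non-circular external input is the theorem of \cite{BigI} that the big $\mathbf{I}$-function lies on the Lagrangian cone of the $(0+,0+)$-quasimap theory. Granting that, $z\partial_{\phi_i}\mathbf{I}$ lies in the tangent space to the cone at $\mathbf{I}$, and the standard structure theory of the cone identifies that tangent space as the free module, over polynomials in $z$ with $q$-adic coefficients, spanned by the series $\mathbf{S}(\phi_j)$; note also that $\mathbf{S}(\gamma)$ lies in this \emph{tangent space}, not ``on the Lagrangian cone'' as you write. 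With both sides placed in the same module, your uniqueness argument (an element of the module of the form $\gamma+\mathcal{O}(1/z)$ is unique, again by $q$-adic induction) correctly forces the factorized series to equal $\mathbf{S}(\gamma)$. Finally, the coefficient-ring caveat you flag is real but is inherited from the statement rather than a defect of your argument: the induction generically produces $\mathbf{t}$-dependent coefficients (already visible from $\mathbf{I}|_{q=0}=e^{\mathbf{t}/z}$), while the Proposition displays $a_i(z,q)\in\CC(\lambda)[z][[q]]$; in \cite{KL}, as in the present paper, the factorization is only ever applied along divisor directions of $\mathbf{t}$, where the dependence is absorbed via the divisor equation.
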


\noindent Here we used following notation,

\begin{align*}
    z \partial_{\gamma}\mathbf{I}:=z\frac{\partial}{\partial s}\mathbf{I}(\tilde{t}+s\tilde{\gamma)}|_{s=0}.
\end{align*}

\section{Holomorphic anomaly equation for local $\PP^1\ti\PP^1$} 

\subsection{Overview}Let X be the total space of canonical bundle $K_{\PP^1\ti\PP^1}$ over $\PP^1\ti\PP^1$. Quasimap invariants of $K_{\PP^1\ti\PP^1}$ is defined via the Euler class of the obstruction bundle,
\begin{align}\label{Obs1}
e(\text{Obs}(K_{\PP^1\ti\PP^1}))=e(R^1\pi_*f^*\tilde{E})\,,
\end{align}
associated to the $K_{\PP^1\ti\PP^1}$ geometry on the moduli space $\overline{Q}_{g,n}(\PP^1\ti\PP^1,\beta)$.
Here 
$$f : \mathcal{C}\rightarrow [\CC^2\ti\CC^2/\CC^*\ti\CC^*]\,,\, \pi: \mathcal{C}\rightarrow \overline{Q}_{g,n}(\PP^1\ti\PP^1,(d_1,d_2))$$ 
is the standard universal structures and $\tilde{E}$ is the line bundle on $$[\CC^2\ti\CC^2/\CC^*\ti\CC^*]$$ which induce the canonical bundle $\mathcal{O}_{\PP^1\ti\PP^1}(-2,-2)$ on $\PP^1\ti\PP^1$.

Define genus $g$ quasimap potential function
\begin{align*}
    \mathsf{F}_g(q_1,q_2):=\sum_{d_1,d_2\ge 0}q_1^{d_1}q_2^{d_2}\int_{\overline{Q}_{g,n}(\PP^1\ti\PP^1,(d_1,d_2))}e(\text{Obs}(K_{\PP^1\ti\PP^1})) \,.
\end{align*}
For the proof of holomorphic anomaly equation, we also need following potential function with insertions,
\begin{multline*}
    \mathsf{F}_{g,i}[H_1+H_2,\dots,H_1+H_2](q_1,q_2):=\\
    \sum_{d_1,d_2\ge 0}q_1^{d_1}q_2^{d_2}\int_{\overline{Q}_{g,n}(\PP^1\ti\PP^1,(d_1,d_2))}e(\text{Obs}(K_{\PP^1\ti\PP^1}))\prod_{k=1}^i \text{ev}_k^*(H_1+H_2) \,.
\end{multline*}
Following the notations in Section \ref{No}, define following new series

\begin{align*}
\overline{\mathsf{F}}_g(q):=&\mathsf{F}_g(q,q)\,,\\
\overline{\mathsf{F}}_{g,i}[H_1+H_2,\dots,H_1+H_2](q):=&\mathsf{F}_{g,i}[H_1+H_2,\dots,H_1+H_2](q,q)\,.
\end{align*}

We also have the series $\mathbf{S}_i$, $\mathbf{V}_{ij}$, $\mathbf{I}$ defined in 
Section 2 associated to $K_{\PP^1\ti\PP^1}$ geometry. We will use the notations in Section \ref{No} for these series.
 
%Let $\mathbf{Z}\in\QQ[[t_1,t_2,q_1,q_2]]$ be the series defined in Section 2 for the $K_{\PP^1\ti\PP^1}$ geometry. We define new series as follows.\begin{align*}\mathsf{Z}:=\mathbf{Z}|_{t_1=0,t_2=0}\in\QQ[[q_1,q_2]]\,,\\\tilde{\mathsf{Z}}:=\mathsf{Z}|_{q_1=q_2=q} \in\QQ[[q]]\,.\end{align*}
 
\subsection{Localization graphs}

\subsubsection{Torus action.}We apply the localization strategy first introduced by Givental for the quasimap invariants of $K_{\PP^1\ti\PP^1}$. Let $\mathsf{T}:=(\CC^*)^4$ act diagonally on  vector space $\CC^2\ti\CC^2$ with weights 
\begin{align*}
    -\lambda_0,-\lambda_1,-\lambda_2,-\lambda_3\,.
\end{align*}
 
 There is an induced $\mathsf{T}$-action on the moduli space $\overline{Q}_{g,n}(\PP^1\ti\PP^1,(d_1,d_2))$. The localization formula of \cite{GP} applied to the virtual fundamenal class $\overline{Q}_{g,n}(\PP^1\ti\PP^1,(d_1,d_2))$ will play a essential role in our paper. The $\mathsf{T}$-fixed loci are represented in terms of dual graphs, and the contributions of the $\mathsf{T}$-fixed loci are given by tautological classes. The formulas here are standard, see \cite{KL,MOP}.
 \subsubsection{Graphs.}\label{locq} Let the genus g and the number of markings $n$ for the moduli space be in the stable range
\begin{align}\label{dmdm}
2g-2+n > 0.
\end{align}
We can organize the $\mathsf{T}$-fixed loci of $\overline{Q}_{g,n}(\PP^1\ti\PP^1,\beta)$ according to decorated graphs. A {\em decorated graph} $\Gamma$(X) consists of the data $(\mathsf{V},\mathsf{E},\mathsf{N},\gamma,\nu)$ where
\begin{enumerate}
 \item [(i)] $\mathsf{V}$ is the vertex set,
 \item [(ii)]$\mathsf{E}$ is the edge set (possibly with self-edges),
 \item [(iii)]$\mathsf{N}:\{1,2,\dots,n\}\rightarrow\mathsf{V}$ is the marking assignment,
 \item [(iv)]$\mathsf{g}:\mathsf{V}\rightarrow\mathbf{Z}_{\ge 0}$ is a genus assignment satisfying
 $$g=\sum_{v\in\mathsf{V}}\mathsf{g}(v)+h^1(\Gamma)$$
 and for which $(\mathsf{V},\mathsf{E},\mathsf{N},\gamma)$ is stable graph,
 \item [(v)]$\mathsf{p}:\mathsf{V}\rightarrow (X)^{\mathsf{T}}$ is an assignment of a $\mathsf{T}$-fixed point $\mathsf{p}(v)$ to each vertex $v\in\mathsf{V}$.
\end{enumerate}
The markings $\mathsf{L}$ are often called {\em legs.}

To each decorated graph $\Gamma\in \mathsf{G}_{g,n}(\PP^1\ti\PP^1)$, we associate the set of fixed loci of 
$$\sum_{d_1,d_2\ge 0}[\overline{Q}_{g,n}(\PP^1\ti\PP^1,(d_1,d_2))]^{vir}q_1^{d_1}q_2^{d_2}$$
whose element can be described as follows:
\begin{enumerate}
\item [(a)] If $\{v_{i_1},\dots,v_{i_k}\}=\{v | \mathsf{p}(v)=p_i\}$, then $f^{-1}(p_i)$ is a disjoint union of connected stable curves of genera $\mathsf{g}(v_{i_1}),\dots,\mathsf{g}(v_{i_k})$.
\item [(b)] There is a bijecive correspondence between the connected components of $f^{-1}(\PP^1\ti\PP^1/\{p_0,\dots,p_m\})$ and the edges of $\Gamma$ respecting vertex incidence. 
Using decorated graph, we can write the localization formula as
$$\sum_{(d_1,d_2) \ge (0,0)}[\overline{Q}_{g,n}(\PP^1\ti\PP^1,(d_1,d_2))]^{vir} q_1^{d_1}q_2^{d_2}=\sum_{\Gamma\in \mathsf{G}_{g,n}(\PP^1\ti\PP^1)}\text{Cont}_{\Gamma}.$$
\end{enumerate}
Note that each contribution $\text{Cont}_{\Gamma}$ is a power series in $q$ obtained from an infinite sum over all edge possibilities (b), while $\mathsf{G}_{g,n}(\PP^1\ti\PP^1)$ is a finite set.
\subsubsection{Unstable graphs}
The moduli spaces of stable quotients $$\overline{Q}_{0,2}(\PP^1\ti\PP^1,(d_1,d_2)) \ \ \
\text{and} \ \ \ \overline{Q}_{1,0}(\PP^1\ti\PP^1,(d_1,d_2))$$
for $d_1>0$ or $d_2>0$
are the only{\footnote{The moduli spaces
$\overline{Q}_{0,0}(\proj^m,d)$ and
$\overline{Q}_{0,1}(\proj^m,d)$
are empty by the definition of a stable
quotient.}}
cases where
the pair $(g,n)$ does 
{\em not} satisfy the Deligne-Mumford stability condition 
\eqref{dmdm}. 

An appropriate set of decorated graphs $\mathsf{G}_{0,2}(\PP^1\ti\PP^1)$
 is easily defined: The graphs
$\Gamma \in \mathsf{G}_{0,2}(\PP^1\ti\PP^1)$ all have 2 vertices
connected by a single edge. Each vertex carries a marking.
All  of the  conditions (i)-(v)
of Section \ref{locq} are satisfied
except for the stability of $(\mathsf{V},\mathsf{E}, \mathsf{N},\gamma)$.
%and
%$\mathsf{G}_{1,0}(\PP^m)$ 
The  localization formula holds,
\begin{eqnarray}\label{ddgg}
\sum_{(d_1,d_2)>(0,0)} \left[\overline{Q}_{0, 2} (\PP^1\ti\PP^1, (d_1,d_2))\right]^{\vir} q_1^{d_1}q_2^{d_2} &=&
\sum_{\Gamma\in \mathsf{G}_{0,2}(\PP^1\ti\PP^1)} \text{Cont}_\Gamma\,,
\end{eqnarray}
For $\overline{Q}_{1,0}(\PP^1\ti\PP^1,(d_1,d_2))$, the matter
is more problematic --- usually a marking
is introduced to break the symmetry.

\subsection{Higher genus series on $K_{\PP^1 \ti \PP^1}$} 
 \subsubsection{Intersection theory on $\overline{M}_{g,n}$.} We review here the now standard method used by Givental \cite{Elliptic,SS,Book} to 
 express genus $g$ descendent correlators in terms of genus 0 data.
 
 Let $t_0,t_1,t_2, \ldots$ be formal variables. The series
 $$T(c)=t_0+t_1 c+t_2 c^2+\ldots$$   in the
additional variable $c$ plays a basic role. The variable $c$
will later be  replaced by the first Chern class $\psi_i$ of
 a cotangent line  over $\overline{M}_{g,n}$, 
 $$T(\psi_i)= t_0 + t_1\psi_i+ t_2\psi_i^2 +\ldots\, ,$$
 with the index $i$
 depending on the position of the series $T$ in the correlator.

Let $2g-2+n>0$.
For $a_i\in \mathbb{Z}_{\geq 0}$ and  $\gamma \in H^*(\overline{M}_{g,n})$, define the correlator 
%$\langle \psi^{a_1},\psi^{a_2},...,\psi^{a_n}\, |\, \gamma\rangle_{g,n}$ by
\begin{multline*}
    \lann \psi^{a_1},\ldots,\psi^{a_n}\, | \, \gamma\,  \rann_{g,n}= 
    \sum_{k\geq 0} \frac{1}{k!}\int_{\overline{M}_{g,n+k}}
    \gamma \, \psi_1^{a_1}\cdots 
     \psi_n^{a_n}  \prod_{i=1}^k T(\psi_{n+i})\, . 
\end{multline*}
For formal variables $x_1,\ldots,x_n$, we also define the correlator
\begin{align}\label{derf}
\lannn \frac{1}{x_1-\psi_1},\ldots,\frac{1}{x_n-\psi_n}\, \Big| \, \gamma \, \rannn_{g,n}
\end{align}
in the standard way by expanding $\frac{1}{x_i-\psi_i}$ as a geometric series.

Denote by $\mathds{L}$ the differential operator 
\begin{align*}
        \mathds{L}\, =\, 
        \frac{\partial}{\partial t_0}-\sum_{i=1}^\infty t_i\frac{\partial}{\partial t_{i-1}}
        \, =\, \frac{\partial}{\partial t_0}-t_1\frac{\partial}{\partial t_0}-t_2\frac{\partial}{\partial t_1}-\ldots
        \, .
\end{align*}
 The string equation yields the following result.
 
\begin{Lemma} \label{stst} For $2g-2+n>0$, we have
$\mathds{L}\lann 1,\ldots,1\, | \, \gamma\, \rann_{g,n}=0$ and 
\begin{multline*}
\mathds{L} \lannn \frac{1}{x_1-\psi_1},\ldots,\frac{1}{x_n-\psi_n}\, \Big| \,\gamma \, 
\rannn_{g,n}= \\
 \left(\frac{1}{x_1}+\ldots +\frac{1}{x_n}\right)
 \lannn\frac{1}{x_1-\psi_1}, \frac{1}{x_n-\psi_n}\, \Big| \, \gamma \, \rannn_{g,n}\, .
 \end{multline*}
\end{Lemma}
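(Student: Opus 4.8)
The plan is to derive both identities from the string equation on $\overline{M}_{g,n}$, exploiting the dilaton-shift structure that is built into the operator $\mathds{L}$. The first preliminary observation I would record is that differentiation in $t_j$ simply creates a descendant insertion: since $T(c)=\sum_m t_m c^m$ and the $k$ factors $T(\psi_{n+i})$ enter symmetrically against the weight $\frac{1}{k!}$, the Leibniz rule together with a relabelling of the new point gives
\[
\partial_{t_j}\lann \psi^{a_1},\ldots,\psi^{a_n}\,|\,\gamma\rann_{g,n}=\lann \psi^{a_1},\ldots,\psi^{a_n},\psi^{j}\,|\,\gamma\rann_{g,n}\,,
\]
the extra marked point carrying $\psi^{j}$; the factor $k$ produced by differentiating one of the $k$ identical $T$-factors absorbs the shift from $\frac{1}{k!}$ to $\frac{1}{(k-1)!}$.

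Next I would analyze the term $\partial_{t_0}$, which inserts an extra point carrying $\psi^0=1$, by forgetting that point. The string equation, applied to the forgetful map $\overline{M}_{g,N+1}\to\overline{M}_{g,N}$, lowers the $\psi$-exponent by one on each remaining point and sums the results, points already carrying $\psi^0$ contributing nothing. I would split the remaining points into the $k$ descendant points carrying $T(\psi)$ and the $n$ genuine insertions, and treat the two sources separately. Lowering on a descendant point replaces $T(\psi)=\sum_m t_m\psi^m$ by $\sum_{m\ge1}t_m\psi^{m-1}$; summing over the $k$ such points and using the first-step identity in reverse reassembles precisely $\sum_{i\ge1}t_i\,\partial_{t_{i-1}}$ of the original correlator, in both parts (i) and (ii). For part (i) the fixed insertions are all $1=\psi^0$ and drop out, so $\partial_{t_0}$ equals the $T$-point contribution alone and $\mathds{L}$ annihilates the correlator. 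For part (ii) each fixed insertion $\frac{1}{x_j-\psi_j}=\sum_{a\ge0}\psi_j^a/x_j^{a+1}$ lowers to $\sum_{a\ge1}\psi_j^{a-1}/x_j^{a+1}=\frac{1}{x_j}\cdot\frac{1}{x_j-\psi_j}$, producing the multiplicative factor $\frac{1}{x_j}$ while leaving the insertion intact; summing over $j$ yields the claimed prefactor $\frac{1}{x_1}+\cdots+\frac{1}{x_n}$.

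Combining, the descendant-point contribution cancels the term $-\sum_{i\ge1}t_i\partial_{t_{i-1}}$ in $\mathds{L}$ in both cases, leaving $0$ in part (i) and the sum of residue factors times the original correlator in part (ii). The main bookkeeping obstacle is the careful tracking of the string-equation contributions: one must verify that lowering on the descendant points reassembles exactly into $\sum_{i}t_i\partial_{t_{i-1}}$, that the $\psi^0$-points genuinely contribute nothing, and that the $\frac{1}{x_j-\psi_j}$ insertions reproduce themselves up to the factor $\frac{1}{x_j}$. Throughout, the hypothesis $2g-2+n>0$ is what guarantees that the relevant forgetful maps and the string equation are valid.
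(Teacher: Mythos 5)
Your proposal is correct and is precisely the argument the paper intends: the paper offers no written proof beyond the remark that "the string equation yields the following result," and your derivation — differentiation in $t_j$ inserting a $\psi^j$-point, the string equation applied to the $\partial_{t_0}$ insertion, the descendant-point terms reassembling into $\sum_{i\ge 1} t_i\partial_{t_{i-1}}$, and the geometric-series insertions reproducing themselves with a factor $\frac{1}{x_j}$ — is exactly that computation carried out in detail. The bookkeeping (the $k$ from the Leibniz rule cancelling the shift in $\frac{1}{k!}$, the $\psi^0$ insertions dropping out in part (i)) is handled correctly.
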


After the restriction $t_0=0$ and application of the dilaton equation,
the correlators are expressed in terms of finitely many integrals (by the
dimension constraint). For example,
\begin{eqnarray*}
    \lann 1,1,1\rann_{0,3}\, |_{t_0=0} &= &\frac{1}{1-t_1}\, ,\\
    \lann 1,1,1,1\rann_{0,4}\, |_{t_0=0}& =&\frac{t_2}{(1-t_1)^3}\, ,\\
    \lann 1,1,1,1,1\rann_{0,5}\, |_{t_0=0}&=&\frac{t_3}{(1-t_1)^4}+\frac{3 t_2^2}{(1-t_1)^5}\, ,\\
    \lann 1,1,1,1,1,1\rann_{0,6}\, |_{t_0=0}&=&\frac{t_4}{(1-t_1)^5}+\frac{10 t_2 t_3}{(1-t_1)^6}+\frac{15 t^3_2}{(1-t_1)^7}\, .
\end{eqnarray*}\\

We consider 
$\CC(t_1)[t_2,t_3,...]$
as $\ZZ$-graded ring over $\CC(t_1)$ with 
$$\text{deg}(t_i)=i-1\ \ \text{for $i\geq 2$ .}$$
Define a subspace of homogeneous elements by
$$\CC\left[\frac{1}{1-t_1}\right][t_2,t_3,\ldots]_{\text{Hom}} \subset 
\CC(t_1)[t_2,t_3,...]\, .
$$
We easily see 
$$\lann \psi^{a_1},\ldots,\psi^{a_n}\, | \, \gamma \, \rann_{g,n}\, |_{t_0=0}\ \in\
\CC\left[\frac{1}{1-t_1}\right][t_2,t_3,\ldots]_{\text{Hom}}\, .$$
%\subset
%\CC(t_1)[t_2,t_3,\ldots%]_{\text{Hom}} .$$
Using the leading terms (of lowest degree in $\frac{1}{(1-t_1)}$), we obtain the
following result.

\begin{Lemma}\label{basis}
The set of genus 0 correlators
 $$
 \Big\{ \, \lann 1,\ldots,1\rann_{0,n}\, |_{t_0=0} \, \Big\}_{n\geq  4} $$ 
freely generate the ring
 $\CC(t_1)[t_2,t_3,...]$ over $\CC(t_1)$.
\end{Lemma}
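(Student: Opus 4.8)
The plan is to show that the $\CC(t_1)$-algebra homomorphism
$$\Phi \colon \CC(t_1)[X_4, X_5, X_6, \ldots] \longrightarrow \CC(t_1)[t_2, t_3, \ldots], \qquad X_n \longmapsto \lann 1,\ldots,1\rann_{0,n}\big|_{t_0=0}$$
is an isomorphism of graded rings, where I grade the source by $\deg X_n = n-3$. Both rings are then free graded $\CC(t_1)$-modules whose degree-$d$ piece has rank equal to the number $p(d)$ of partitions of $d$: monomials in the $t_i$ ($i \ge 2$) of degree $d$ correspond to partitions of $d$ via $t_i \leftrightarrow$ a part of size $i-1$, and products of the $X_n$ correspond likewise via $X_n \leftrightarrow$ a part of size $n-3$. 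So it suffices to prove that $\Phi$ is an isomorphism in each fixed degree $d$, for which I will exhibit a triangular structure with invertible diagonal.

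First I would pin down the leading term of each generator. Writing $G_n := \lann 1,\ldots,1\rann_{0,n}|_{t_0=0}$ and isolating, inside the defining sum over $\overline{M}_{0,n+k}$, the contribution of configurations with exactly one genuine insertion $t_{n-2}\psi^{n-2}$ and all remaining insertions equal to the dilaton term $t_1\psi$, the dilaton equation resums the dilaton insertions: iterating it from the base integral $\int_{\overline{M}_{0,n+1}}\psi^{n-2}=1$ produces the binomial series, giving
$$G_n = \frac{t_{n-2}}{(1-t_1)^{n-1}} + R_n,$$
where every monomial of $R_n$ is a constant multiple of $(1-t_1)^{-p}$ times a monomial in $t_2, t_3, \ldots$ with at least two factors. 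The key quantitative point, which the same dilaton count yields, is that a term of $G_n$ carrying $m$ factors of the $t_i$ ($i \ge 2$) appears with denominator exactly $(1-t_1)^{\,n+m-2}$; hence within $G_n$ the number of such factors and the power of $(1-t_1)^{-1}$ determine each other, and the displayed leading monomial $t_{n-2}$ is the unique term with $m=1$.

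Next I would run the triangularity argument degree by degree. Index the monomial basis of each degree-$d$ piece by partitions of $d$ and filter by the number of parts. For a partition $\nu = (\nu_1, \ldots, \nu_r)$ the product $\prod_j G_{\nu_j+3}$ expands by choosing, in each factor, either its leading term (one $t$-factor, i.e. one part) or a term from $R_{\nu_j+3}$ (at least two $t$-factors, i.e. at least two parts). Consequently every $t$-monomial occurring in $\Phi(\prod_j X_{\nu_j+3})$ is a refinement of $\nu$, so has at least $r$ parts, with exactly $r$ parts only for the all-leading-terms choice, which reproduces the $t$-monomial of $\nu$ with coefficient $\prod_j (1-t_1)^{-(\nu_j+2)} \neq 0$. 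Ordering partitions by increasing number of parts therefore makes the matrix of $\Phi$ in degree $d$ triangular with units of $\CC(t_1)$ on the diagonal, hence invertible; since its determinant is a nonzero element of $\CC(t_1)$, $\Phi$ is bijective in every degree and thus an isomorphism, which is exactly the asserted free generation.

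I expect the main obstacle to be the bookkeeping in establishing the exact leading term and the correspondence ``$m$ factors $\Leftrightarrow$ denominator $(1-t_1)^{n+m-2}$'': one must handle the weights $\tfrac{1}{k!}$ together with the choice of which of the $k$ extra markings carry genuine versus dilaton insertions, and verify that mixed contributions never lower the power of $(1-t_1)^{-1}$ below what the part-count predicts. Once that stratification by number of $t$-factors is clean, the triangularity and the resulting isomorphism are formal.
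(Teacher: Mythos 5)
Your proof is correct and follows essentially the same route as the paper: the paper's one-line proof also rests on extracting the leading terms of lowest degree in $1/(1-t_1)$ — equivalently, your unique one-factor terms $t_{n-2}/(1-t_1)^{n-1}$ — and the resulting triangularity of the transition matrix with respect to the number of parts of a partition. Your write-up simply supplies the dilaton-resummation and partition-refinement bookkeeping that the paper leaves implicit.
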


By  Lemma \ref{basis}, we can find a unique representation of $\lann \psi^{a_1},\ldots,\psi^{a_n}\rann_{g,n}|_{t_0=0}$
in the  variables
\begin{equation}\label{k3k3}
\Big\{\, \lann 1,\ldots,1\rann_{0,n}|_{t_0=0}\, \Big\}_{n\geq 3}\, .
\end{equation}
The $n=3$ correlator is included in the set \eqref{k3k3} to
capture the variable $t_1$.
For example, in $g=1$,
\begin{eqnarray*}
    \lann 1,1\rann_{1,2}|_{t_0=0}&=&\frac{1}{24}
    \left(\frac{\lann 1,1,1,1,1\rann_{0,5}|_{t_0=0}}{\lan 1,1,1\rann_{0,3}|_{t_0=0}}-\frac{\lann 1,1,1,1\rann^2_{0,4}|_{t_0=0}}{\lann 1,1,1\rann^2_{0,3}|_{t_0=0}}\right)\, ,\\
    \lann 1\rann_{1,1}|_{t_0=0}&=&\frac{1}{24}\frac{\lann 1,1,1,1\rann_{0,4}|_{t_0=0}}{\lann 1,1,1\rann_{0,3}|_{t_0=0}}
    \end{eqnarray*}
A more complicated example in $g=2$ is    
\begin{eqnarray*}
\lann \rann_{2,0}|_{t_0=0}&=& \ \ \frac{1}{1152}\frac{\lann 1,1,1,1,1,1\rann_{0,6}|_{t_0=0}}{\lann 1,1,1\rann_{0,3}|_{t_0=0}^2}\\
    & & -\frac{7}{1920}\frac{\lann 1,1,1,1,1\rann_{0,5}|_{t_0=0}\lann 1,1,1,1\rann_{0,4}|_{t_0=0}}{\lann 1,1,1\rann_{0,3}|_{t_0=0}^3}\\& &+\frac{1}{360}\frac{\lann 1,1,1,1\rann_{0,4}|_{t_0=0}^3}{\lann 1,1,1\rann_{0,3}
    |_{t_0=0}^4}\, .
\end{eqnarray*}

\begin{Def} 
For $\gamma \in H^*(\overline{M}_{g,k})$, let $$\pP^{a_1,\ldots,a_n,\gamma}_{g,n}(s_0,s_1,s_2,...)\in \QQ(s_0, s_1,..)$$ be 
the unique rational function satisfying the condition
$$\lann \psi^{a_1},\ldots,\psi^{a_n}\, |\, \gamma\, \rann_{g,n}|_{t_0=0}
=\pP^{a_1,a_2,...,a_n,\gamma}_{g,n}|_{s_i=\lann 1,\ldots,1\rann_{0,i+3}|_{t_0=0}}\, . $$
\end{Def}
 
\begin{Prop}\label{GR1} For $2g-2+n>0$,
we have
 $$\lann 1,\ldots,1\,|\, \gamma\, \rann_{g,n}
=\pP^{0,\ldots,0,\gamma}_{g,n}|_{s_i=\lann 1,\ldots,1\rann_{0,i+3}}\, . $$
\end{Prop}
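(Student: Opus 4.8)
The plan is to exhibit both sides as formal power series in $t_0,t_1,t_2,\dots$ that are annihilated by the string operator $\mathds{L}$ and that coincide after the specialization $t_0=0$, and then to argue that an $\mathds{L}$-closed series is determined by its value at $t_0=0$. Write $A:=\lann 1,\dots,1\,|\,\gamma\rann_{g,n}$ for the left-hand side and $B:=\pP^{0,\dots,0,\gamma}_{g,n}|_{s_i=\lann 1,\dots,1\rann_{0,i+3}}$ for the right-hand side, and set $F:=A-B$. By the very definition of $\pP$ we already know $A|_{t_0=0}=B|_{t_0=0}$, so $F|_{t_0=0}=0$; it then remains to see that $\mathds{L}F=0$ and that this vanishing forces $F=0$.

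First I would record the shape of $\mathds{L}$. Collecting the terms involving $\partial_{t_0}$,
$$\mathds{L}=(1-t_1)\frac{\partial}{\partial t_0}-\sum_{i\ge 2}t_i\frac{\partial}{\partial t_{i-1}}=(1-t_1)\frac{\partial}{\partial t_0}-\mathsf{D}',$$
where $\mathsf{D}':=\sum_{i\ge 2}t_i\,\partial_{t_{i-1}}$ involves neither $t_0$ nor $\partial_{t_0}$. Expanding any power series as $F=\sum_{m\ge 0}\frac{t_0^m}{m!}F_m$ with each $F_m$ independent of $t_0$, the equation $\mathds{L}F=0$ is equivalent to the recursion $F_{m+1}=\frac{1}{1-t_1}\mathsf{D}'F_m$; hence every coefficient is determined by $F_0=F|_{t_0=0}$, and $F|_{t_0=0}=0$ gives $F\equiv 0$. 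This elementary recursion is the only genuinely new ingredient. To apply it I must check $\mathds{L}F=0$. For $A$ this is exactly the string equation of Lemma \ref{stst}. For $B$ I use that $\mathds{L}$ is a derivation: each generator $s_i=\lann 1,\dots,1\rann_{0,i+3}$ satisfies $2\cdot 0-2+(i+3)=i+1>0$, so Lemma \ref{stst} yields $\mathds{L}\,\lann 1,\dots,1\rann_{0,i+3}=0$, and the chain rule propagates this vanishing through the rational function $\pP$, giving $\mathds{L}B=0$.

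The step I expect to require the most care is the handling of $B$: one must know that the substitution $s_i\mapsto\lann 1,\dots,1\rann_{0,i+3}$ produces a genuine power series in $t_0$ — which holds because $s_0=\lann 1,1,1\rann_{0,3}$ specializes to a series with invertible $t_0$-constant term (its value at $t_0=0$ being $\tfrac{1}{1-t_1}$), so that all denominators appearing in $\pP$ remain invertible — whence the derivation property of $\mathds{L}$ and the chain rule are legitimately applicable. Granting this, $\mathds{L}A=0$ and $\mathds{L}B=0$ give $\mathds{L}F=0$, and together with $F|_{t_0=0}=0$ the recursion above forces $F=0$, which is the asserted identity.
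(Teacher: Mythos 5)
Your proof is correct and takes essentially the same approach as the paper: both sides are annihilated by the string operator $\mathds{L}$ (via Lemma \ref{stst} applied directly to the left side and, through the derivation property, to the generators $s_i$ on the right side) and agree at $t_0=0$, which determines them uniquely. The paper's proof is a terse two-sentence version of exactly this argument; your explicit recursion $F_{m+1}=\frac{1}{1-t_1}\mathsf{D}'F_m$ merely spells out the uniqueness step that the paper leaves implicit.
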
 

\begin{proof}
 Both sides of the equation satisfy the differential equation
 \begin{align*}
     \mathds{L}=0.
 \end{align*}
 By definition, both sides have the same initial conditions at $t_0=0$.
\end{proof}

\begin{Prop}\label{GR2} For $2g-2+n>0$,
 \begin{multline*}
     \lannn \frac{1}{x_1-\psi_1}, \ldots, \frac{1}{x_n-\psi_n}\, \Big| \, \gamma \, \rannn_{g,n}= \\
     e^{\lann 1,1\rann_{0,2}(\sum_i\frac{1}{x_i})}\sum_{a_1,\ldots,a_n}\frac{\pP^{a_1,\ldots,a_n,\gamma}_{g,n}|_{s_i=\lann 1,\ldots,1\rann_{0,i+3}}
     }{x_1^{a_1+1} \cdots x_n^{a_n+1}}.
 \end{multline*}
\end{Prop}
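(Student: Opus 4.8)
The plan is to run the same differential-equation argument used for Proposition \ref{GR1}, now keeping track of the extra exponential prefactor. First I would expand the left-hand side as a geometric series in the $\frac{1}{x_i}$, so that
\[
\lannn \frac{1}{x_1-\psi_1},\ldots,\frac{1}{x_n-\psi_n}\,\Big|\,\gamma\,\rannn_{g,n}=\sum_{a_1,\ldots,a_n}\frac{\lann \psi^{a_1},\ldots,\psi^{a_n}\,|\,\gamma\,\rann_{g,n}}{x_1^{a_1+1}\cdots x_n^{a_n+1}}\,.
\]
Both sides of the asserted identity will then be shown to satisfy the common first-order equation $\mathds{L}F=\big(\sum_i \frac{1}{x_i}\big)F$ and to agree after the restriction $t_0=0$; uniqueness for this equation finishes the proof exactly as in Proposition \ref{GR1}.

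For the left-hand side the equation $\mathds{L}F=\big(\sum_i \frac{1}{x_i}\big)F$ is precisely the second identity of Lemma \ref{stst}. For the right-hand side, write it as $e^{\lann 1,1\rann_{0,2}\sum_i \frac{1}{x_i}}\cdot G$, where $G=\sum_{a_1,\ldots,a_n}\frac{\pP^{a_1,\ldots,a_n,\gamma}_{g,n}\big|_{s_i=\lann 1,\ldots,1\rann_{0,i+3}}}{x_1^{a_1+1}\cdots x_n^{a_n+1}}$. Each evaluation variable $s_i=\lann 1,\ldots,1\rann_{0,i+3}$ satisfies $\mathds{L}s_i=0$ by the first identity of Lemma \ref{stst}, since $(0,i+3)$ lies in the stable range $2g-2+n=i+1>0$; because $G$ depends on the $t$-variables only through the $s_i$ and $\mathds{L}$ is a derivation, the chain rule gives $\mathds{L}G=0$. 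Hence $\mathds{L}$ acting on the right-hand side only hits the exponential, producing $\big(\mathds{L}\lann 1,1\rann_{0,2}\big)\big(\sum_i \frac{1}{x_i}\big)$ times the right-hand side.

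The two inputs I would establish by hand concern the unstable correlator $\lann 1,1\rann_{0,2}=\sum_{k\ge1}\frac{1}{k!}\int_{\overline{M}_{0,2+k}}\prod_{i=1}^k T(\psi_{2+i})$, for which Lemma \ref{stst} does not apply. First, a dimension count shows $\lann 1,1\rann_{0,2}|_{t_0=0}=0$: at $t_0=0$ every factor $T(\psi)$ has $\psi$-degree at least one, so the integrand has degree at least $k$ while $\dim\overline{M}_{0,2+k}=k-1$. Second, I would prove $\mathds{L}\lann 1,1\rann_{0,2}=1$ directly: one has $\partial_{t_0}\lann 1,1\rann_{0,2}=\lann 1,1,1\rann_{0,3}$ and $\sum_{i\ge1}t_i\,\partial_{t_{i-1}}\lann 1,1\rann_{0,2}=\lann 1,1,T^-\rann_{0,3}$ with $T^-(\psi)=\sum_{i\ge1}t_i\psi^{i-1}$, while the string equation applied to $\lann 1,1,1\rann_{0,3}$ (forgetting one of the weight-zero markings) yields $\lann 1,1,1\rann_{0,3}=1+\lann 1,1,T^-\rann_{0,3}$, the isolated $1$ coming from the otherwise-unstable $k=0$ term $\int_{\overline{M}_{0,3}}1$. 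Subtracting gives $\mathds{L}\lann 1,1\rann_{0,2}=1$, so both sides indeed solve $\mathds{L}F=\big(\sum_i\frac{1}{x_i}\big)F$.

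Finally I would match the two sides at $t_0=0$. Since $\lann 1,1\rann_{0,2}|_{t_0=0}=0$ the exponential becomes $1$ there, and the definition of $\pP^{a_1,\ldots,a_n,\gamma}_{g,n}$ gives $\lann \psi^{a_1},\ldots,\psi^{a_n}\,|\,\gamma\rann_{g,n}|_{t_0=0}=\pP^{a_1,\ldots,a_n,\gamma}_{g,n}|_{s_i=\lann 1,\ldots,1\rann_{0,i+3}|_{t_0=0}}$ term by term, so the two series coincide at $t_0=0$. Uniqueness proceeds as in Proposition \ref{GR1}: the difference $D$ satisfies $\mathds{L}D=\big(\sum_i\frac{1}{x_i}\big)D$ and $D|_{t_0=0}=0$, and expanding $D=\sum_m D_m(t_1,t_2,\ldots)t_0^m$ turns the equation into the recursion $(m+1)(1-t_1)D_{m+1}=\sum_{i\ge2}t_i\partial_{t_{i-1}}D_m+\big(\sum_i\frac{1}{x_i}\big)D_m$; since $1-t_1$ is an invertible power series, $D_0=0$ forces $D_m=0$ for all $m$. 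I expect the main obstacle to be the careful treatment of the unstable $(0,2)$ correlator — in particular pinning down the constant $\mathds{L}\lann 1,1\rann_{0,2}=1$, which is the sole source of the exponential factor distinguishing this statement from Proposition \ref{GR1}.
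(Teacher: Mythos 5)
Your proposal is correct and follows essentially the same route as the paper: both sides are shown to satisfy $(\mathds{L}-\sum_i\frac{1}{x_i})F=0$ and to agree at $t_0=0$, with the exponential factor accounted for by $\mathds{L}\lann 1,1\rann_{0,2}=1$ and $\lann 1,1\rann_{0,2}|_{t_0=0}=0$. The only difference is one of completeness — the paper merely asserts these two facts about the unstable $(0,2)$ correlator and the uniqueness of solutions, while you verify them (string equation computation, dimension count, and the $t_0$-expansion recursion), all correctly.
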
 
 
 \begin{proof}
  Both sides of the equation satisfy differential equation
  \begin{align*}
      \mathds{L}-\sum_i\frac{1}{x_i}=0.
  \end{align*}
 Both sides have the same initial conditions at $t_0=0$.
 We use here
 %since
 %\begin{align*}
 %    <\frac{1}{x_1-\psi}, \frac{1}{x_2-\psi}, ..., %\frac{1}{x_n-\psi}>_{g,n}|_{t_0=0}=\frac{<\psi^{a_1},\psi^{a_2},...,%\psi^{a_n}>_{g,n}|_{t_0=0}}{x_1^{a_1} x_2^{a_2} .... x_n^{a_n}},
 %\end{align*} 
 %and
 %begin{align*}
     $$\mathds{L} \lann 1,1\rann_{0,2} =1\,, \ \ \ \  \lann 1,1\rann_{0,2}|_{t_0=0}=0\, .$$
 %\end{align*}
 There is no conflict here with Lemma
 \ref{stst} since $(g,n)=(0,2)$ is not
 in the stable range.
 \end{proof}

\subsubsection{The unstable case $(0,2)$}
The definition given in \eqref{derf}
of the correlator is valid
in the stable range $$2g-2+n>0\, .$$
The unstable case $(g,n)=(0,2)$ plays a
special role. We define
$$\lannn \frac{1}{x_1-\psi_1}, \frac{1}{x_2-\psi_2}\rannn_{0,2}$$
by 
adding the
degenerate term
$$\frac{1}{x_1+x_2}$$
to the terms obtained
by the 
 expansion of $\frac{1}{x_i-\psi_i}$ as 
 a geometric series.
 The degenerate term is associated
to the (unstable) moduli space
of genus 0 with 2 markings.

\begin{Prop}\label{GR22} We have
 \begin{equation*}
     \lannn \frac{1}{x_1-\psi_1}, \frac{1}{x_2-\psi_2} \rannn_{0,2}= 
     e^{\lann 1,1\rann_{0,2}\left(\frac{1}{x_1}+
     \frac{1}{x_2}\right)}\left(\frac{1}{x_1+x_2}\right)\, .
 \end{equation*}
\end{Prop}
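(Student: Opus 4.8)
The plan is to prove Proposition \ref{GR22} directly from the definition of the $(0,2)$-correlator, since here the entire content is a bookkeeping identity about geometric series rather than a genuine intersection-theoretic computation. First I would unwind the definition: by the prescription given just above the statement, $\lannn \frac{1}{x_1-\psi_1}, \frac{1}{x_2-\psi_2} \rannn_{0,2}$ equals the degenerate term $\frac{1}{x_1+x_2}$ plus the contributions coming from the stable moduli spaces $\overline{M}_{0,2+k}$ with $k\geq 1$ insertions of $T(\psi)$, obtained by expanding each $\frac{1}{x_i-\psi_i}=\sum_{a_i\geq 0}\psi_i^{a_i}/x_i^{a_i+1}$ as a geometric series.

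Next I would observe that on $\overline{M}_{0,2+k}$ every monomial $\psi_1^{a_1}\psi_2^{a_2}\prod_{i=1}^k T(\psi_{2+i})$ that contributes must have total degree equal to $\dim \overline{M}_{0,2+k}=k-1$. After setting $t_0=0$, the effect is exactly to produce the generating series for $\lann 1,1\rann_{0,2}$, whose defining property is recorded in the proof of Proposition \ref{GR2}, namely $\mathds{L}\lann 1,1\rann_{0,2}=1$ together with the initial condition $\lann 1,1\rann_{0,2}|_{t_0=0}=0$. The key step is therefore to show that the assembled sum over $k$ and over $a_1,a_2$ collapses into the closed form $e^{\lann 1,1\rann_{0,2}(1/x_1+1/x_2)}\cdot \frac{1}{x_1+x_2}$. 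I expect this to follow from the same differential-equation argument used for Proposition \ref{GR2}: both sides satisfy $(\mathds{L}-\frac{1}{x_1}-\frac{1}{x_2})(\,\cdot\,)=0$, where on the left one uses the modified string equation with the extra degenerate term accounting for the unstable boundary contribution $\frac{1}{x_1+x_2}$, and on the right one differentiates the exponential directly. Matching initial conditions at $t_0=0$, where the left side reduces to just the degenerate term $\frac{1}{x_1+x_2}$ and the exponential prefactor becomes $1$, then forces equality.

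The main obstacle, and the only genuinely delicate point, is verifying that the string equation applies correctly in this unstable range: the usual Lemma \ref{stst} is stated for $2g-2+n>0$ and explicitly excludes $(0,2)$, so I cannot invoke it as a black box. Instead I would argue that the added degenerate term $\frac{1}{x_1+x_2}$ is precisely the correction that restores the identity $\mathds{L}\lannn \frac{1}{x_1-\psi_1}, \frac{1}{x_2-\psi_2}\rannn_{0,2}=(\frac{1}{x_1}+\frac{1}{x_2})\lannn \frac{1}{x_1-\psi_1}, \frac{1}{x_2-\psi_2}\rannn_{0,2}$, by checking how $\mathds{L}$ acts term by term on the geometric-series expansion and comparing with the boundary pushforward that relates $\overline{M}_{0,3+k}$ to $\overline{M}_{0,2+k}$. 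Once this compatibility is confirmed, the proof is a direct consequence of uniqueness of solutions to the first-order operator $\mathds{L}-\frac{1}{x_1}-\frac{1}{x_2}$ subject to the given initial data, exactly paralleling the proof of Proposition \ref{GR2}.
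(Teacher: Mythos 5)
Your proposal is correct and follows essentially the same route as the paper: both sides are annihilated by $\mathds{L}-\frac{1}{x_1}-\frac{1}{x_2}$ (using $\mathds{L}\lann 1,1\rann_{0,2}=1$ on the right) and agree at $t_0=0$, where dimension counting kills all non-degenerate terms and $\lann 1,1\rann_{0,2}|_{t_0=0}=0$ trivializes the exponential. Your extra verification that the degenerate term $\frac{1}{x_1+x_2}$ repairs the string equation in the unstable range --- via $\bigl(\frac{1}{x_1}+\frac{1}{x_2}\bigr)\frac{1}{x_1+x_2}=\frac{1}{x_1x_2}$ matching the $\overline{M}_{0,3}$ contribution --- is exactly the point the paper's terse proof leaves implicit.
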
 
 
 \begin{proof}
  Both sides of the equation satisfy differential equation
  \begin{align*}
      \mathds{L}-\sum_{i=1}^2\frac{1}{x_i}=0.
  \end{align*}
 Both sides have the same initial conditions at $t_0=0$.
  %since
 %\begin{align*}
 %    <\frac{1}{x_1-\psi}, \frac{1}{x_2-\psi}, ..., %\frac{1}{x_n-\psi}>_{g,n}|_{t_0=0}=\frac{<\psi^{a_1},\psi^{a_2},...,%\psi^{a_n}>_{g,n}|_{t_0=0}}{x_1^{a_1} x_2^{a_2} .... x_n^{a_n}},
 %\end{align*} 
 %and
 %begin{align*
 \end{proof}

 \subsection{Decomposition formula.} We review here the localization strategy
to obtain a contribution formula for a general graph $\Gamma$.

\subsubsection{Edge terms}
Recall the definition{\footnote{We use
the variables $x_1$ and $x_2$ here instead
of $x$ and $y$.}}of $\mathbf{V}_{ij}$
given in Section \ref{lightm},
\begin{equation}\label{dfdf6}
\mathbf{V}_{ij}  =  
\Big\langle \Big\langle  \frac{\phi _i}{x- \psi } ,  \frac{\phi _j}{y - \psi } 
\Big\rangle \Big\rangle  _{0, 2}  \, .
\end{equation}
Let $\mathsf{V}_{ij}$ denote
the restriction of $\mathbf{V}_{ij}$
to $t_1=0,t_2=0$.
Via formula \eqref{ddgg},
$\mathsf{V}_{ij}$ is a summation of contributions of fixed loci indexed by
a graph $\Gamma$ consisting of two vertices 
connected by a unique edge. 
Let $w_1$ and $w_2$ be 
$\T$-weights. Denote by $$\mathsf{V}_{ij}^{w_1,w_2}$$ the summation of contributions of $\T$-fixed loci with
tangent weights precisely $w_1$
and $w_2$ on the first rational components
which exit the vertex components over
$p_i$ and $p_j$.

The series $\mathsf{V}_{ij}^{w_1,w_2}$
includes {\em both} vertex and edge
contributions.
By definition \eqref{dfdf6} and the virtual localization formula, we find the
following relationship between
$\mathsf{V}_{ij}^{w_1,w_2}$
and the corresponding
pure edge contribution $\mathsf{E}_{ij}^{w_1,w_2}$,

\begin{eqnarray*}
    e_i\mathsf{V}_{ij}^{w_1,w_2}e_j
    &=& \lannn \frac{1}{w_1-\psi},\frac{1}{x_1-\psi}\rannn^{p_i,0+}_{0,2}\mathsf{E}_{ij}^{w_1,w_2}
    \lannn \frac{1}{w_2-\psi},\frac{1}{x_2-\psi}\rannn^{p_j,0+}_{0,2}\\
    &=&\frac{e^{\frac{\lann 1,1\rann^{p_i,0+}_{0,2}}{w_1}+\frac{\lann 1,1\rann^{p_j,0+}_{0,2}}{x_1}}}{w_1+x_1}
    \, \mathsf{E}^{w_1,w_2}_{ij}\, \frac{e^{\frac{\lann 1,1\rann^{p_i,0+}_{0,2}}{w_2}+\frac{\lann 1,1\rann^{p_j,0+}_{0,2}}{x_2}}}{w_2+x_2}
     \end{eqnarray*}
    
\begin{align*}        
    =\sum_{a_1,a_2}e^{\frac{\lann 1,1\rann^{p_i,0+}_{0,2}}{x_1}+\frac{\lann 1,1\rann^{p_i,0+}_{0,2}}{w_1}}e^{\frac{\lann 1,1\rann^{p_j,0+}_{0,2}}{x_2}+\frac{\lann 1,1\rann^{p_j,0+}_{0,2}}{w_2}}(-1)^{a_1+a_2} \frac{
    \mathsf{E}^{w_1,w_2}_{ij}}{w_1^{a_1}w_2^{a_2}}x_1^{a_1-1}x_2^{a_2-1}\, .
\end{align*}
After summing over all possible weights, we obtain
$$
    e_i\left(\mathsf{V}_{ij}-\frac{\delta_{ij}}{e_i(x+y)}\right)e_j=\sum_{w_1,w_2} e_i\mathsf{V}_{ij}^{w_1,w_2}e_j\, .$$
The above calculations immediately yield
the following result.
    
%    &=\sum_{w_1,w_2}<\frac{1}{w_1-\psi},\frac{1}{x_1-\psi}>E_{w_1,w_2}<\frac{1}{w_2-\psi},\frac{1}{x_2-\psi}>\\
%    &=\sum_{w_1,w_2}\frac{e^{\frac{<1,1>}{w_1}+\frac{<1,1>}{x_1}}}{w_1+x_1}E_{w_1,w_2}\frac{e^{\frac{<1,1>}{w_2}+\frac{<1,1>}{x_2}}}{w_2+x_2}\\
%    &=\sum_{w_1,w_2}\sum_{(a_1,a_2)}e^{\frac{<1,1>}{x_1}+\frac{<1,1>}{w_1}}e^{\frac{<1,1>}{x_2}+\frac{<1,1>}{w_2}}\frac{E_{w_1,w_2}}{w_1^{a_1}w_2^{a_2}}x_1^{a_1-1}y_1^{a_2-1}\\
%    &=\sum_{(a_1,a_2)}\sum_{w_1,w_2}e^{\frac{<1,1>}{x_1}+\frac{<1,1>}{w_1}}e^{\frac{<1,1>}{x_2}+\frac{<1,1>}{w_2}}\frac{E_{w_1,w_2}}{w_1^{a_1}w_2^{a_2}}x_1^{a_1-1}y_1^{a_2-1}.
%\end{align*}

\begin{Lemma}\label{Edge} We have
 \begin{multline*}
 \left[e^{-\frac{\lann1,1\rann^{p_i,0+}_{0,2}}{x_1}}
       e^{-\frac{\lann1,1\rann^{p_j,0+}_{0,2}}{x_2}}e_i\left(\mathsf{V}_{ij}-\frac{\delta_{ij}}{e_i(x+y)}\right)e_j\right]_{x_1^{a_1-1}x_2^{a_2-1}}=\\
       \sum_{w_1,w_2}
       e^{\frac{\lann1,1\rann^{p_i,0+}_{0,2}}{w_1}}e^{\frac{\lann1,1\rann^{p_j,0+}_{0,2}}{w_2}}(-1)^{a_1+a_2}\frac{\mathsf{E}_{ij}^{w_1,w_2}}{w_1^{a_1}w_2^{a_2}}\, .
 \end{multline*}
\end{Lemma}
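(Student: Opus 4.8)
The plan is to read the identity off the virtual localization decomposition of $\mathsf{V}_{ij}$ together with the closed-form evaluation of the unstable $(0,2)$ vertex correlators supplied by Proposition \ref{GR22}. First I would recall that, after restricting $\mathbf{V}_{ij}$ to $t_1=t_2=0$, formula \eqref{ddgg} expresses $\mathsf{V}_{ij}$ as a sum of contributions of $\T$-fixed loci indexed by graphs with two vertices, sitting over $p_i$ and $p_j$, joined by a single edge. I would then isolate the piece $\mathsf{V}_{ij}^{w_1,w_2}$ in which the first rational components leaving the two vertex components carry tangent weights precisely $w_1$ and $w_2$, so that, summing over weights, $e_i\left(\mathsf{V}_{ij}-\frac{\delta_{ij}}{e_i(x+y)}\right)e_j=\sum_{w_1,w_2}e_i\mathsf{V}_{ij}^{w_1,w_2}e_j$. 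The subtracted diagonal term records the degenerate genus-$0$, $2$-marked contribution, which is not produced by any honest edge graph.

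The second step is the factorization coming from the virtual localization formula: each $e_i\mathsf{V}_{ij}^{w_1,w_2}e_j$ splits as a product of two vertex factors and one pure edge factor $\mathsf{E}_{ij}^{w_1,w_2}$, where the vertex factors are exactly the local two-pointed correlators $\lannn \frac{1}{w_1-\psi},\frac{1}{x_1-\psi}\rannn^{p_i,0+}_{0,2}$ and $\lannn \frac{1}{w_2-\psi},\frac{1}{x_2-\psi}\rannn^{p_j,0+}_{0,2}$. Here the attaching markings carry the weights $w_1,w_2$ and the free markings carry the formal variables $x_1,x_2$, with one of each distributed to the two vertices.

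Third I would evaluate both vertex correlators in closed form by Proposition \ref{GR22}, which gives $\lannn \frac{1}{w_r-\psi},\frac{1}{x_r-\psi}\rannn^{p,0+}_{0,2}=\frac{1}{w_r+x_r}\,e^{\lann 1,1\rann^{p,0+}_{0,2}(1/w_r+1/x_r)}$. Substituting these, expanding each $\frac{1}{w_r+x_r}$ as the geometric series $\sum_{a_r\ge 1}(-1)^{a_r-1}x_r^{a_r-1}/w_r^{a_r}$, and summing over all tangent weights $w_1,w_2$ reproduces the intermediate double sum displayed above the lemma. Multiplying through by $e^{-\lann 1,1\rann^{p_i,0+}_{0,2}/x_1}e^{-\lann 1,1\rann^{p_j,0+}_{0,2}/x_2}$ then cancels the $x$-exponentials carried by the vertex factors, and extracting the coefficient of $x_1^{a_1-1}x_2^{a_2-1}$ leaves precisely the right-hand side, with its sign $(-1)^{a_1+a_2}=(-1)^{a_1-1}(-1)^{a_2-1}$ and its surviving $w$-exponentials.

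The one delicate point is the bookkeeping of the unstable $(0,2)$ geometry: one must match the degenerate term $\frac{1}{x_1+x_2}$ of Proposition \ref{GR22} against the subtracted $\frac{\delta_{ij}}{e_i(x+y)}$, so that the weight sum on the right ranges only over genuine edge graphs, and one must carry the $e_i,e_j$ normalizations and the signs produced by the geometric-series expansion through correctly. This is where an error would most easily enter; but once the factorization and the Proposition \ref{GR22} evaluation are in place, the identity is purely formal and \emph{the above calculations immediately yield it}.
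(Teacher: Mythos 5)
Your proof is correct and is essentially the paper's own argument: the same localization decomposition of $\mathsf{V}_{ij}$ into weight-$(w_1,w_2)$ pieces with the diagonal degenerate term subtracted, the same vertex--edge--vertex factorization from virtual localization, the same evaluation of the unstable $(0,2)$ vertex factors via Proposition \ref{GR22}, and the same geometric-series expansion and coefficient extraction with sign $(-1)^{a_1+a_2}$. Nothing is missing.
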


\noindent The notation $[\ldots]_{x_1^{a_1-1}x_2^{a_2-1}}$
in Lemma \ref{Edge} denotes the coefficient of
 $x_1^{a_1-1}x_2^{a_2-1}$ in the series expansion
 of the argument.

\subsection{A simple graph}\label{simgr}
Before treating the general case, we present
the localization formula for a simple graph{\footnote{We
follow here the notation of Section \ref{locq}.}.
Let
$\Gamma\in \mathsf{G}_{g}(\PP^1\ti\PP^1)$ 
consist of two vertices   and one edge,
$$v_1,v_2\in \Gamma(V)\, , \ \ \ \ 
e\in \Gamma(E)\, $$
with genus and $\T$-fixed point assignments
$$\mathsf{g}(v_i)=g_i\, , \ \ \ \ \mathsf{p}(v_i)=p_i\, .$$

Let $w_1$ and $w_2$ be tangent
weights at the vertices $p_1$ and $p_2$
respectively. Denote by $\text{Cont}_{\Gamma,w_1,w_2}$
the summation of contributions to
\begin{equation}\label{zlzl}
\sum_{(d_1,d_2)\ge(0,0)} q_1^{d_1}q_2^{d_2}\, \left[\overline{Q}_{g}(K_{\PP^1\ti\PP^1},(d_1,d_2))\right]^{\vir}
\end{equation}
of $\T$-fixed loci with
tangent weights precisely $w_1$
and $w_2$ on the first rational components
which exit the vertex components over
$p_1$ and $p_2$.
We can express the localization formula for 
\eqref{zlzl} as
$$
\lannn \frac{1}{w_1-\psi}\, \Big|\, \mathsf{H}_{g_1}^{p_1}
\rannn_{g_1,1}^{p_1,0+}
\mathsf{E}^{w_1,w_2}_{12} \lannn\frac{1}{w_2-\psi}\, \Big|\, \mathsf{H}_{g_2}^{p_2}
\rannn_{g_2,1}^{p_2,0+} $$
which equals
$$\sum_{a_1,a_2} e^{\frac{\lann1,1\rann^{p_1,0+}_{0,2}}{w_1}}\frac{\ppl
{\psi^{a_1-1}} \, \Big|\, \mathsf{H}_{g_1}^{p_1}     \ppr_{g_1,1}^{p_1,0+}} {w_1^{a_1}} \mathsf{E}^{w_1,w_2}_{12} e^{\frac{\lann 1,1\rann^{p_2,0+}_{0,2}}{w_2}}\frac{\ppl {\psi^{a_2-1}} \, \Big|\, \mathsf{H}_{g_2}^{p_2}\ppr_{g_2,1}^{p_2,0+}}{w_2^{a_2}}
$$
%\begin{align*}
%    [\sum_d q^d \overline{Q}_{g}(K\PP^2,d)]^{\vir}_{\Gamma_{w_1,w_2}}&=H^{g_1,p_1}<\frac{1}{w_1-\psi}>E_{w_1,w_2}H^{g_2,p_2}<\frac{1}{w_2-\psi}>\\
%    &=\sum_{a_1,a_2} e^{\frac{<1,1>}{w_1}}H^{g_1,p_1}[\frac{\psi^{a_1-1}}{w_1^{a_1}}]E_{w_1,w_2}e^{\frac{<1,1>}{w_2}}H^{g_2,p_2}[\frac{\psi^{a_2-1}}{w_2^{a_2}}]
%\end{align*}
where $\mathsf{H}_{g_i}^{p_i}$ is
the Hodge class 

$$\frac{e(\mathbb{E}^*_g\ot T_{p_i}(\PP^1\ti\PP^1))}{e(T_{p_i}(\PP^1\ti\PP^1))}\cdot\frac{e(\mathbb{E}^*_g\ot(-2\lambda_{p_i}))}{-2(\lambda_{p_i})}\,,$$
with $\lambda_{p_i}:=(H_1+H_2)|_{p_i}$.
We have used here
the notation
\begin{multline*}
\ppl
\psi^{k_1}_1, \ldots,\psi^{k_n}_n \, \Big|\, \mathsf{H}_{h}^{p_i}     \ppr_{h,n}^{p_i,0+} 
=\\
\pP^{k_1,\ldots,k_n,\mathsf{H}_{h}^{p_i}  }_{h,1}\big(\lann 1,1,1\rann_{0,3}^{p_i,0+},\lann 1,1,1,1\rann_{0,4}^{p_i,0+},\ldots \big)
\,
\end{multline*}
and applied \eqref{GR2}.

% \begin{align*}
%     H^{g,p_i}:=\frac{\prod_{k \ne i} \prod_{j=0}^g (\lambda_i-\lambda_k-c_j)}{\prod_{k \ne i} (\lambda_i-\lambda_k)}\frac{-3\lambda_i}{\prod_{j=0}^g(-3\lambda_i-c_j)}.
% \end{align*}
After summing over all possible weights $w_1,w_2$ and
applying 
%\begin{align*}
%    [\sum_d q^d \overline{Q}_{g}(K\PP^2,d)]^{\vir}_{\Gamma}&=[\sum_d q^d \overline{Q}_{g}(K\PP^2,d)]^{\vir}_{\Gamma_{w_1,w_2}}\\
%    &=\sum_{w_1,w_2}H^{g_1,p_1}<\frac{1}{w_1-\psi}>E_{w_1,w_2}H^{g_2,p_2}<\frac{1}{w_2-\psi}>\\
%    &=\sum_{w_1,w_2}\sum_{a_1,a_2} e^{\frac{<1,1>}{w_1}}H^{g_1,p_1}[\frac{\psi^{a_1-1}}{w_1^{a_1}}]E_{w_1,w_2}e^{\frac{<1,1>}{w_2}}H^{g_2,p_2}[\frac{\psi^{a_2-1}}{w_2^{a_2}}]\\
%    &=\sum_{a_1,a_2}\sum_{w_1,w_2} H^{g_1,p_1}[\psi^{a_1-1}]H^{g_2,p_2}[\psi^{a_2-1}](e^{\frac{<1,1>}{w_1}}e^{\frac{<1,1>}{w_2}}\frac{E_{w_1,w_2}}{w_1^{a_1}w_2^{a_2}})
%\end{align*}
Lemma \ref{Edge}, we obtain the following result for the full contribution $$\text{Cont}_\Gamma = \sum_{w_1,w_2} \text{Cont}_{\Gamma,w_1,w_2}$$
of $\Gamma$ to $\sum_{(d_1,d_2)> (0,0)} q_1^{d_1}q_2^{d_2} \left[ \overline{Q}_{g}(K_{\PP^1\ti\PP^1},(d_1,d_2))\right]^{\vir}$.

\begin{Prop} We have \label{propsim}
 \begin{multline*}
     \text{\em Cont}_{\Gamma}=
     \sum_{a_1,a_2>0}
     \ppl
{\psi^{a_1-1}}  \, \Big|\, \mathsf{H}_{g_1}^{p_i}\,     \ppr_{g_1,1}^{p_i,0+}
\ppl
{\psi^{a_2-1}}  \, \Big|\, \mathsf{H}_{g_2}^{p_j}\,     \ppr_{g_2,1}^{p_j,0+}\ \ \ \ \ \ \ \ \ \ \ \\
\ \ \ \ \ \ \ \ \ \ \cdot
     (-1)^{a_1+a_2}\left[e^{-\frac{\lann1,1\rann^{p_i,0+}_{0,2}}{x_1}}
       e^{-\frac{\lann1,1\rann^{p_j,0+}_{0,2}}{x_2}}e_i\left(\mathsf{V}_{ij}-\frac{\delta_{ij}}{e_i(x_1+x_2)}\right)e_j\right]_{x_1^{a_1-1}x_2^{a_2-1}}\, .
 \end{multline*}
\end{Prop}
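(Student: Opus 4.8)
The plan is to assemble the formula directly from two ingredients already in place: the weight-refined contribution $\text{Cont}_{\Gamma,w_1,w_2}$, expanded via Proposition \ref{GR2}, and the edge identity of Lemma \ref{Edge}. By the localization decomposition of the simple graph $\Gamma$ into its two vertex contributions and its single edge contribution, we have $\text{Cont}_\Gamma=\sum_{w_1,w_2}\text{Cont}_{\Gamma,w_1,w_2}$, where each summand was already rewritten above, using Proposition \ref{GR2} on the genus-$g_1$ and genus-$g_2$ vertex correlators, as
\[
\sum_{a_1,a_2} e^{\frac{\lann1,1\rann^{p_i,0+}_{0,2}}{w_1}}\frac{\ppl {\psi^{a_1-1}}\Big|\mathsf{H}_{g_1}^{p_i}\ppr_{g_1,1}^{p_i,0+}}{w_1^{a_1}}\,\mathsf{E}^{w_1,w_2}_{ij}\,e^{\frac{\lann1,1\rann^{p_j,0+}_{0,2}}{w_2}}\frac{\ppl {\psi^{a_2-1}}\Big|\mathsf{H}_{g_2}^{p_j}\ppr_{g_2,1}^{p_j,0+}}{w_2^{a_2}}.
\]

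First I would note that the two vertex factors $\ppl\psi^{a_1-1}\,|\,\mathsf{H}_{g_1}^{p_i}\ppr_{g_1,1}^{p_i,0+}$ and $\ppl\psi^{a_2-1}\,|\,\mathsf{H}_{g_2}^{p_j}\ppr_{g_2,1}^{p_j,0+}$ carry no dependence on the tangent weights $w_1,w_2$: they are the localized genus-$g_1$ and genus-$g_2$ vertex integrals expressed, through Proposition \ref{GR2}, purely in terms of the genus-zero generators $\lann 1,\dots,1\rann^{p_i,0+}_{0,m}$ and $\lann1,\dots,1\rann^{p_j,0+}_{0,m}$. The sum runs effectively over $a_1,a_2\ge 1$, since the geometric expansion of $1/(w_\ell-\psi)$ produces only the powers $\psi^{a_\ell-1}$ with $a_\ell\ge 1$; this is the origin of the range $a_1,a_2>0$ in the claimed formula. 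Interchanging the sum over $(a_1,a_2)$ with the sum over $(w_1,w_2)$, I can then pull the two weight-independent vertex factors outside the weight sum, leaving inside exactly
\[
\sum_{w_1,w_2} e^{\frac{\lann1,1\rann^{p_i,0+}_{0,2}}{w_1}}e^{\frac{\lann1,1\rann^{p_j,0+}_{0,2}}{w_2}}\frac{\mathsf{E}^{w_1,w_2}_{ij}}{w_1^{a_1}w_2^{a_2}}.
\]

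Next I would invoke Lemma \ref{Edge}, which identifies this weight sum, once the sign $(-1)^{a_1+a_2}$ is incorporated, with the coefficient of $x_1^{a_1-1}x_2^{a_2-1}$ in $e^{-\frac{\lann1,1\rann^{p_i,0+}_{0,2}}{x_1}}e^{-\frac{\lann1,1\rann^{p_j,0+}_{0,2}}{x_2}}e_i(\mathsf{V}_{ij}-\frac{\delta_{ij}}{e_i(x_1+x_2)})e_j$. Since my inner weight sum carries no sign, solving the identity of Lemma \ref{Edge} for it reinstates a factor $(-1)^{a_1+a_2}$ on the coefficient-extraction side; substituting this back into $\sum_{a_1,a_2>0}[\text{vertex factors}]\cdot[\text{inner sum}]$ reproduces precisely the claimed formula.

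The only genuine obstacle is justifying the interchange of the two summations, since the sum over tangent weights $(w_1,w_2)$ is infinite whereas the final object is a formal power series in $q_1,q_2$. I would resolve this coefficient by coefficient: for each fixed bidegree $(d_1,d_2)$ only finitely many weight configurations $(w_1,w_2)$ contribute a nonzero term to the $q_1^{d_1}q_2^{d_2}$-coefficient, so the rearrangement is a finite manipulation at each order in $q$ and the identity holds as a formal-series statement. Granting this, the remaining steps are the purely algebraic substitutions above, and the proposition follows.
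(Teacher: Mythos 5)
Your proposal is correct and follows essentially the same route as the paper: expand each weight-refined contribution $\text{Cont}_{\Gamma,w_1,w_2}$ via Proposition \ref{GR2}, pull the weight-independent vertex factors $\ppl \psi^{a_\ell-1}\,|\,\mathsf{H}_{g_\ell}^{p}\ppr$ out of the sum over tangent weights, and convert the remaining weight sum over $\mathsf{E}^{w_1,w_2}_{ij}$ into the coefficient extraction from $\mathsf{V}_{ij}$ using Lemma \ref{Edge}, with the sign $(-1)^{a_1+a_2}$ handled exactly as you describe. Your degree-by-degree justification of the summation interchange is a point the paper passes over silently, but it is a refinement of the same argument rather than a different approach.
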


\subsection{A general graph} We apply the argument of Section \ref{simgr}
to obtain a contribution formula for a general graph $\Gamma$.

Let $\Gamma\in \mathsf{G}_{g,0}(\PP^1\ti\PP^1)$ be a decorated graph as defined in Section \ref{locq}. The {\em flags} of $\Gamma$ are the 
half-edges{\footnote{Flags are either half-edges or markings.}}. Let $\mathsf{F}$ be the set of flags. 
Let
$$\mathsf{w}: \mathsf{F} \rightarrow \text{Hom}(\T, \com^*)\otimes_{\mathbb{Z}}{\mathbb{Q}}$$
be a fixed assignment of $\T$-weights to each flag.

We first consider the contribution $\text{Cont}_{\Gamma,\mathsf{w}}$ to 
$$\sum_{(d_1,d_2)\ge (0,0)} q_1^{d_1}q_2^{d_2} \left[\overline{Q}_g(K_{\PP^1\ti\PP^1},(d_1,d_2))\right]^{\text{vir}}$$
of the $\T$-fixed loci associated $\Gamma$ satisfying
the following property:
the tangent weight on
the first rational component corresponding
to each $f\in \mathsf{F}$ is
exactly given by $\mathsf{w}(f)$.
We have 
\begin{equation}
    \label{s234}
    \text{Cont}_{\Gamma,\mathsf{w}} = \frac{1}{|\text{Aut}(\Gamma)|}
    \sum_{\mathsf{A} \in \ZZ_{> 0}^{\mathsf{F}}} \prod_{v\in \mathsf{V}} \text{Cont}^{\mathsf{A}}_{\Gamma,\mathsf{w}} (v)\prod _{e\in \mathsf{E}} {\text{Cont}}_{\Gamma,\mathsf{w}}(e)\, .
\end{equation}
 The terms on the  right side of \eqref{s234} 
require definition:
\begin{enumerate}
\item[$\bullet$] The sum on the right is over 
the set $\ZZ_{> 0}^{\mathsf{F}}$ of
all maps 
$$\mathsf{A}: \mathsf{F} \rightarrow \ZZ_{> 0}$$
corresponding to the sum over $a_1,a_2$ in
Proposition \ref{propsim}.
\item[$\bullet$]
For $v\in \mathsf{V}$ with 
$n$ incident
flags with $\mathsf{w}$-values $(w_1,\ldots,w_n)$ and
$\mathsf{A}$-values
$(a_1,a_2,...,a_n)$, 
\begin{align*}
    \text{Cont}^{\mathsf{A}}_{\Gamma,{\mathsf{w}}}(v)=
    \frac{\ppl
\psi_1^{a_1-1}, \ldots,
\psi_n^{a_n-1}
\, \Big|\, \mathsf{H}_{\mathsf{g}(v)}^{\mathsf{p}(v)}\,     \ppr_{\mathsf{g}(v),n}^{\mathsf{p}(v),0+}}
{w_1^{a_1} \cdots w_n^{a_n}}\, .
\end{align*}
\item[$\bullet$]
For $e\in \mathsf{E}$ with 
assignments $(\mathsf{p}(v_1), \mathsf{p}(v_2))$
for the two associated vertices{\footnote{In case $e$
is self-edge, $v_1=v_2$.}} and 
$\mathsf{w}$-values $(w_1,w_2)$ for the two associated flags,
    $$    
    \text{Cont}_{\Gamma,\mathsf{w}}(e)=
    e^{\frac{\lann1,1\rann^{\mathsf{p}(v_1),0+}_{0,2}}{w_1}}
    e^{\frac{\lann1,1\rann^{\mathsf{p}(v_2),0+}_{0,2}}{w_2}}
    \mathsf{E}^{w_1,w_2}_{\mathsf{p}(v_1),\mathsf{p}(v_2)}\, .$$
\end{enumerate}
The localization formula then yields \eqref{s234}
just as in the simple case of Section \ref{simgr}.

By summing the contribution \eqref{s234} of $\Gamma$ over
all the weight functions $\mathsf{w}$
and applying Lemma \ref{Edge}, we obtain
the following result which generalizes 
Proposition \ref{propsim}.

\begin{Prop}\label{VE} We have
 $$
 \text{\em Cont}_\Gamma
     =\frac{1}{|\text{\em Aut}(\Gamma)|}
     \sum_{\mathsf{A} \in \ZZ_{> 0}^{\mathsf{F}}} \prod_{v\in \mathsf{V}} 
     \text{\em Cont}^{\mathsf{A}}_\Gamma (v)
     \prod_{e\in \mathsf{E}} \text{\em Cont}^{\mathsf{A}}_\Gamma(e)\, ,
 $$
 where the vertex and edge contributions 
 with incident flag $\mathsf{A}$-values $(a_1,\ldots,a_n)$
 and $(b_1,b_2)$ respectively are
 \begin{eqnarray*}
    \text{\em Cont}^{\mathsf{A}}_\Gamma (v)&=&
    \ppl
\psi_1^{a_1-1}, \ldots,
\psi_n^{a_n-1}
\, \Big|\, \mathsf{H}_{\mathsf{g}(v)}^{\mathsf{p}(v)}\,
  \ppr_{\mathsf{g}(v),n}^{\mathsf{p}(v),0+}\,  ,\\
    \text{\em Cont}^{\mathsf{A}}_\Gamma(e)
    &=&
    (-1)^{b_1+b_2}\left[e^{-\frac{\lann1,1\rann^{\mathsf{p}(v_1),0+}_{0,2}}{x_1}}
       e^{-\frac{\lann1,1\rann^{\mathsf{p}(v_2),0+}_{0,2}}{x_2}}e_i\left(\mathsf{V}_{ij}-\frac{\delta_{ij}}{e_i(x+y)}\right)e_j\right]_{x_1^{b_1-1}x_2^{b_2-1}}\, ,
%    (e^{-\frac{<1,1>}{x_1}}e^{-\frac{<1,1>}{x_2}}\mathds{V}_{e(1)e(2)})_{x_1^{b_1-1}x_2^{b_2-1}}.
 \end{eqnarray*}
where $\mathsf{p}(v_1)=p_i$ and $\mathsf{p}(v_2)=p_j$ in the second equation. 
\end{Prop}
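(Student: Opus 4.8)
The plan is to run the localization computation of Section~\ref{simgr} edge by edge, now for an arbitrary decorated graph $\Gamma$. First I would fix a flag-weight assignment $\mathsf{w}\colon \mathsf{F}\to \Hom(\T,\com^*)\otimes_\ZZ\QQ$ and apply the virtual localization formula to the $\T$-fixed locus indexed by the pair $(\Gamma,\mathsf{w})$. Normalizing the domain curve along its nodes splits this locus into a product of vertex loci (the contracted genus-$\mathsf{g}(v)$ curves lying over $\mathsf{p}(v)$, carrying the Hodge class $\mathsf{H}^{\mathsf{p}(v)}_{\mathsf{g}(v)}$) and edge loci (the rational bridges $\PP^1$), and the Euler class of the virtual normal bundle factors accordingly. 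This is exactly the content of the decomposition \eqref{s234}, the factor $1/|\text{Aut}(\Gamma)|$ correcting the overcounting by graph symmetries; it is obtained from the single-edge computation of Section~\ref{simgr} by the standard node-smoothing argument, applied one node at a time.

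Next I would redistribute the tangent-weight denominators. In \eqref{s234} the vertex factor $\text{Cont}^{\mathsf{A}}_{\Gamma,\mathsf{w}}(v)$ carries a denominator $w_1^{a_1}\cdots w_n^{a_n}$, one factor for each flag incident to $v$. Since there are no markings here ($n=0$ for $\Gamma\in\mathsf{G}_{g,0}$), every flag is a half-edge belonging to a unique edge, so I would move each factor $1/\mathsf{w}(f)^{a_f}$ onto the contribution of the edge containing $f$. After this rewriting the weight of each flag occurs only inside the factor of its own edge; for a self-edge both denominators land on the same edge and both flags sit at a single vertex, which is the one case requiring extra care.

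Then I would sum over all weight assignments $\mathsf{w}$, interchanging this sum with the sum over $\mathsf{A}\in\ZZ_{> 0}^{\mathsf{F}}$ at the level of formal power series in $q_1,q_2$. For fixed $\mathsf{A}$ the summand now factors over the edges, because the set of allowed weight assignments is a product over edges of allowed per-edge weight pairs and, after the redistribution, the two flag-weights of an edge appear only in that edge's factor. The inner sum over the weight pair $(w_1,w_2)$ of an edge joining $p_i$ and $p_j$, whose two flags carry $\mathsf{A}$-values $b_1,b_2$, is precisely the right-hand side of Lemma~\ref{Edge} with $(a_1,a_2)=(b_1,b_2)$; by that lemma it equals $(-1)^{b_1+b_2}$ times the coefficient of $x_1^{b_1-1}x_2^{b_2-1}$ in the bracketed $\mathsf{V}_{ij}$-expression, i.e. the claimed $\text{Cont}^{\mathsf{A}}_\Gamma(e)$. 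The vertex factors, now stripped of their weight denominators, are exactly the weight-free correlators $\text{Cont}^{\mathsf{A}}_\Gamma(v)$, and reassembling the product yields the stated formula.

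The main obstacle is bookkeeping rather than geometry. I must check that the single $\mathsf{A}$-value attached to a flag $f$ plays its two roles consistently: as the exponent $a_f-1$ of $\psi_f$ in the vertex correlator at the vertex of $f$, and as the coefficient-extraction index $b-1$ in the edge factor of the edge of $f$. Because these two occurrences refer to the same flag, summing over weights with $\mathsf{A}$ held fixed leaves the outer sum over $\mathsf{A}\in\ZZ_{> 0}^{\mathsf{F}}$ intact and collapses no index prematurely. The self-edge case, where the edge flip may contribute to $\text{Aut}(\Gamma)$ and both weight denominators originate from one vertex, is the place to verify the per-edge factorization of the weight sum most carefully.
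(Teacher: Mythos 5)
Your proposal is correct and follows essentially the same route as the paper: the paper likewise establishes the fixed-weight decomposition \eqref{s234} by the localization analysis of Section \ref{simgr}, then sums over all weight functions $\mathsf{w}$ and applies Lemma \ref{Edge} to convert the per-edge weight sums into the coefficient-extraction form of $\text{Cont}^{\mathsf{A}}_\Gamma(e)$. The redistribution of the weight denominators from vertex to edge factors and the self-edge bookkeeping that you spell out are exactly the steps the paper leaves implicit.
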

 
\subsubsection{Legs} 
Let $\Gamma \in \mathsf{G}_{g,n}(\PP^1\ti\PP^1)$ be a decorated graph
with markings. While no markings are needed to define the
stable quotient invariants of $K_{\PP^1\ti\PP^1}$, the contributions
of decorated graphs with markings will appear in the
proof of the holomorphic anomaly equation.
The formula for the contribution $\text{Cont}_\Gamma((H_1+H_2)^{k_1},\ldots,(H_1+H_2)^{k_n})$
of $\Gamma$ to 
\begin{align*}
    \sum_{(d_1,d_2)\ge (0,0)}q_1^{d_1}q_2^{d_2} \prod_{j=0}^n \text{ev}^*((H_1+H_2)^{k_j})\cap\left[ \overline{Q}_{g,n}(K_{\PP^1\ti\PP^1},(d_1,d_2))\right]^{\vir} 
\end{align*}
is given by the following result.
\begin{Prop}\label{VEL} We have
 \begin{multline*}
 \text{\em Cont}_\Gamma((H_1+H_2)^{k_1},\ldots,(H_1+H_2)^{k_n})
     =\\\frac{1}{|\text{\em Aut}(\Gamma)|}
     \sum_{\mathsf{A} \in \ZZ_{>0}^{\mathsf{F}}} \prod_{v\in \mathsf{V}} 
     \text{\em Cont}^{\mathsf{A}}_\Gamma (v)
     \prod_{e\in \mathsf{E}} \text{\em Cont}^{\mathsf{A}}_\Gamma(e)
     \prod_{l\in \mathsf{L}} \text{\em Cont}^{\mathsf{A}}_\Gamma(l)\, ,
 \end{multline*}
 where the leg contribution 
% with $\mathsf{A}$-value $\mathsf{A}(l)$
 is 
 \begin{eqnarray*}
     \text{\em Cont}^{\mathsf{A}}_\Gamma(l)
    &=&
    (-1)^{\mathsf{A}(l)-1}\left[e^{-\frac{\lann1,1\rann^{\mathsf{p}(l),0+}_{0,2}}{z}}
       \mathsf{S}_{\mathsf{p}(l)}((H_1+H_2)^{k_l})\right]_{z^{\mathsf{A}(l)-1}}\, .
 \end{eqnarray*}
The vertex and edge contributions are same as before.
\end{Prop}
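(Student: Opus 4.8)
The plan is to run the very same virtual localization argument that produced Proposition \ref{VE}, now keeping track of the markings. Applying the localization formula of \cite{GP} to the marked class
$$\sum_{(d_1,d_2)\ge(0,0)}q_1^{d_1}q_2^{d_2}\prod_{j=1}^n\text{ev}^*((H_1+H_2)^{k_j})\cap\left[\overline{Q}_{g,n}(K_{\PP^1\ti\PP^1},(d_1,d_2))\right]^{\vir},$$
the $\T$-fixed loci are indexed by the same decorated graphs $\Gamma\in\mathsf{G}_{g,n}(\PP^1\ti\PP^1)$ as before, now carrying the $n$ legs recorded by the marking assignment $\mathsf{N}$. Each fixed locus factors, over the vertex, edge, and leg strata of the domain, into a product of local contributions. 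Because each insertion $\text{ev}^*((H_1+H_2)^{k_j})$ is supported at the corresponding marking, it enters only the stratum carrying that leg; hence the vertex and edge factors are literally those of Proposition \ref{VE}, and only the leg factor is new.

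First I would isolate a single leg $l$ incident to a vertex $v$ with $\mathsf{p}(v)=p_i$, so that $\mathsf{p}(l)=p_i$. In the localization picture the marking lies over $p_i$ on a (possibly trivial, possibly positive-degree) rational tail attached to the vertex component at a node. Summing over all such tails, with the insertion $(H_1+H_2)^{k_l}$ at the marking and the class $\tfrac{\phi_i}{z-\psi}$ recording the gluing node, is by the definition
$$\mathbf{S}_i(\gamma)=e_i\lann\tfrac{\phi_i}{z-\psi},\gamma\rann^{0+}_{0,2}$$
exactly the series $\mathbf{S}_{p_i}((H_1+H_2)^{k_l})$; after the restriction $t_1=t_2=0$ of Section \ref{No} this becomes the series $\mathsf{S}_{p_i}((H_1+H_2)^{k_l})$. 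The tangent weight along the leg flag is the localization variable $z$, and extracting the coefficient of $z^{\mathsf{A}(l)-1}$ selects the summand with the prescribed $\mathsf{A}$-value, the sign $(-1)^{\mathsf{A}(l)-1}$ coming from the geometric-series expansion of $\tfrac{1}{z-\psi}$, exactly as the signs $(-1)^{b_1+b_2}$ and the extraction $[\cdots]_{x_1^{b_1-1}x_2^{b_2-1}}$ were produced from $\mathbf{V}_{ij}$ in Lemma \ref{Edge}.

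The one subtlety is the gluing node between the leg tail and the vertex component. As in the passage from $e_i\mathbf{V}_{ij}e_j$ to the pure edge term $\mathsf{E}^{w_1,w_2}_{ij}$, this node carries a $(0,2)$-contribution that is already included in the vertex factor $\ppl\cdots\ppr^{\mathsf{p}(v),0+}$; to avoid double counting I would divide it out by the factor $e^{-\lann1,1\rann^{p_i,0+}_{0,2}/z}$, which is precisely the prefactor $e^{-\lann1,1\rann^{\mathsf{p}(l),0+}_{0,2}/z}$ of the asserted leg contribution. With this in hand, summing the weighted contributions over all flag weight functions $\mathsf{w}$ and all $\mathsf{A}\in\ZZ_{>0}^{\mathsf{F}}$ and reassembling the vertex, edge, and leg factors reproduces the stated formula, in exact parallel with the reassembly step of Proposition \ref{VE}.

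I expect the main obstacle to be the identification in the second paragraph: checking that the two-pointed genus $0$ series defining $\mathbf{S}_i$ really does reproduce, term by term, the localization sum over marked rational tails at $p_i$, with the cotangent-line variable $z$ correctly matched to the flag tangent weight and the base-point and unstable $(0,2)$ geometry at the node correctly handled. This is the leg analogue of the edge dictionary already established in Section \ref{simgr} and Lemma \ref{Edge}; once it is in place, the remaining reassembly is formal.
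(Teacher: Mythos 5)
Your proposal is correct and takes essentially the same approach as the paper: the paper's entire proof of Proposition \ref{VEL} is the remark that it follows from the vertex and edge analysis (details left as an exercise, with the Gromov--Witten parallels cited), and your argument is precisely that analysis adapted to legs. In particular, your leg dictionary --- reading off $\mathsf{S}_{\mathsf{p}(l)}((H_1+H_2)^{k_l})$ from the localization sum over marked rational tails at the fixed point, stripping the node factor $e^{-\lann 1,1\rann^{\mathsf{p}(l),0+}_{0,2}/z}$ to avoid double counting, and extracting the coefficient of $z^{\mathsf{A}(l)-1}$ with sign exactly as in Lemma \ref{Edge} --- is the intended leg analogue of the edge dictionary.
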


The proof of Proposition \ref{VEL} 
follows the vertex and edge analysis. We leave the
details as an exercise for the reader.
The parallel statement for Gromov-Witten theory
can be found in \cite{Elliptic, SS,Book}.

\subsection{twisted theory on $\PP^3$}\label{TT}
Here we study the genus $0$  twisted quasimap theory of $\PP^3$ by $\mathcal{O}_{\PP^3}(2)\oplus\mathcal{O}_{\PP^3}(-2)$. 
We fix a torus action $\mathsf{T}:=(\CC^*)^4$ on $\PP^3$ with weights
$$-\xi_0, -\xi_1, -\xi_2, -\xi_3$$ on the vector space $\CC^4$. We will use the specialization
\begin{align}\label{Sp}
 \xi_i=\zeta^i\,,\,\,\, \text{for}\,\,i=0,1,2,3,4\,,    
\end{align}
where $\zeta$ is the primitive fourth root of unity.  
We denote by
\begin{align*}
    \mathbf{S}^{\text{tw}}\,,\,\mathbf{V}^{\text{tw}}\,,\,\mathbf{I}^{\text{tw}}\,,\,\mathbf{U}^{\text{tw}}
\end{align*}
 the series defined in Section 2 via the Euler class of the obstruction bundle,
 \begin{align}\label{Obs2}
 e(\text{Obs}(\PP^3,\text{tw}))=e(R^0\pi_*f^*\tilde{\mathcal{O}}_{\PP^3}(2)\oplus R^1\pi_*f^*\tilde{\mathcal{O}}_{\PP^3}(-2))\end{align}
on the moduli space $\overline{Q}_{g,n}(\PP^3,d)$. Here $\tilde{\mathcal{O}}_{\PP^3}(\pm 2)$ is the line bundle on $[\CC^4/\CC^*]$ which induces the line bundle $\mathcal{O}_{\PP^3}(\pm 2)$ on $\PP^3$.
 
\subsubsection{$\mathbf{I}$-fucntion and Picard-Fuchs equation.} Let $\tilde{H}\in H^*_{\mathsf{T}}([\CC^4/\CC^*])$ and $H\in H^*_{\mathsf{T}}(\PP^3)$ denote the respective hyperplane class. The $\mathbf{I}$-function for twisted theory can be evaluated as follows.

\begin{Prop}
 For $\mathbf{t}=t\tilde{H}\in H^*_{\mathsf{T}}([\CC^4/\CC^*],\QQ)$,
 \begin{align}
     \mathbf{I}^{\text{tw}}(t)=\sum_{d=0}^{\infty} q^d e^{t(H+dz)/z}\frac{\prod_{k=0}^{2d-1}(-2H-kz)\prod_{k=0}^{2d}(2H+kz)}{\prod_{i=0}^3\prod_{k=1}^d(H-\xi_i+kz)}\,.
 \end{align}
\end{Prop}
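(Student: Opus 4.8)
The plan is to compute the $\mathbf{I}$-function directly from its definition (Definition \ref{BigI}) by analyzing the $\CC^*$-fixed locus $\mathsf{Q}_{k,\beta}$ inside the quasimap graph space and evaluating the localization residue. The key geometric input is that a point of $\mathsf{Q}_{k,\beta}$ consists of a quasimap $f:\PP^1\to[\CC^4/\CC^*]$ of degree $d$ with a length-$d$ basepoint entirely concentrated at $0\in\PP^1$; since the curve class sits over $0$, the map away from the basepoint is constant, and the evaluation $\text{ev}_\bullet$ records this constant value $H\in H^*_{\mathsf{T}}(\PP^3)$. The $z$-dependence enters through the $\CC^*$-weight $z=c_1(T_0\PP^1)$ on the tangent line at $0$.

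First I would set up the localization residue $\text{Res}_{\mathsf{Q}_{k,\beta}}$, writing the virtual normal bundle of $\mathsf{Q}_{d}$ inside the graph space in terms of the $\CC^*\ti\mathsf{T}$-representation coming from $H^0$ and $H^1$ of the relevant bundles on $\PP^1$ with a degree-$d$ basepoint at $0$. The denominator $\prod_{i=0}^3\prod_{k=1}^d(H-\xi_i+kz)$ is exactly the equivariant Euler class of the deformations of the quasimap in the four homogeneous coordinate directions (with weights $-\xi_i$), where the product over $k=1,\ldots,d$ records the $\CC^*$-weights $kz$ coming from the basepoint of length $d$ concentrated at $0$. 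Next I would account for the twisting: the obstruction theory \eqref{Obs2} contributes the factors from $R^0\pi_*f^*\tilde{\mathcal{O}}_{\PP^3}(2)$ and $R^1\pi_*f^*\tilde{\mathcal{O}}_{\PP^3}(-2)$. The bundle $\mathcal{O}(2)$ has degree $2d$ and contributes $H^0$, giving the factor $\prod_{k=0}^{2d}(2H+kz)$ in the numerator (the $2d+1$ sections indexed by $k=0,\ldots,2d$); the bundle $\mathcal{O}(-2)$ has negative degree and contributes $H^1$, giving $\prod_{k=0}^{2d-1}(-2H-kz)$ (the $2d$ cohomology classes). The insertion of the infinitesimal markings weighted by $\mathbf{t}=t\tilde{H}$ produces the prefactor; after the standard bookkeeping with $k!$ and the sum over $k$, the marking insertions exponentiate into $e^{t(H+dz)/z}$, where the $dz$ shift reflects that the markings see the total $\CC^*$-weight built up from the degree-$d$ basepoint.

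The main obstacle will be pinning down the precise index ranges in the three products and the exact form of the exponential prefactor, since these encode subtle sign and weight conventions: one must correctly identify which $\CC^*$-weights survive in the virtual normal bundle after removing the automorphisms and deformations of the distinguished component $C_0\cong\PP^1$, and one must verify that the twisted obstruction classes for $\mathcal{O}(2)$ and $\mathcal{O}(-2)$ enter in the numerator with the stated ranges $k=0,\ldots,2d$ and $k=0,\ldots,2d-1$ respectively (rather than, say, being cancelled against normal-bundle factors). I expect this to reduce to a careful Euler-class computation on $\PP^1$ with a basepoint, entirely parallel to the toric quasimap $\mathbf{I}$-function computations in \cite{BigI,CKg}. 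Once the weight analysis is correct, summing the residue contributions over $\beta=d$ and $k$ assembles the closed formula, and comparison with the specialization \eqref{Sp} is immediate. I would structure the proof so that the geometry of $\mathsf{Q}_{d}$ is isolated first, the twisting contributions are layered on as separate Euler-class factors, and the marking exponential is derived last from the $\sum_k \frac{1}{k!}$ structure in Definition \ref{BigI}.
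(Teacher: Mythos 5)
The paper itself contains no proof of this Proposition: it is asserted as a direct evaluation, implicitly following the standard $\CC^*$-localization on the quasimap graph space as in \cite{BigI}. So your general framework (identify the fixed locus $\mathsf{Q}_d\cong\PP^3$, push forward the localization residue by $\text{ev}_\bullet$, layer the twisting factors onto the normal-bundle computation) is the right one, and two of your three ingredients are correct: the denominator $\prod_{i=0}^{3}\prod_{k=1}^{d}(H-\xi_i+kz)$ is indeed the Euler class of the moving part of the normal bundle of $\mathsf{Q}_d$ in the graph space, and the factor $\prod_{k=0}^{2d}(2H+kz)$ is indeed $e(R^0\pi_*f^*\tilde{\mathcal{O}}_{\PP^3}(2))$ restricted to the fixed locus, since $h^0(\PP^1,\mathcal{O}(2d))=2d+1$ with weights $2H+kz$, $k=0,\dots,2d$.

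The genuine gap is in your accounting of the $\mathcal{O}(-2)$-factor. You justify $\prod_{k=0}^{2d-1}(-2H-kz)$ as the Euler class of $R^1\pi_*f^*\tilde{\mathcal{O}}_{\PP^3}(-2)$, ``the $2d$ cohomology classes''; but on the fixed locus this bundle is $H^1(\PP^1,\mathcal{O}_{\PP^1}(-2d))\ot\mathcal{O}_{\PP^3}(-2)$, which has rank $2d-1$ (not $2d$), with $\mathsf{T}\ti\CC^*$-weights $-2H-kz$ for $k=1,\dots,2d-1$ only. A rank-$(2d-1)$ bundle cannot contribute $2d$ Euler factors, so the computation you outline produces $\prod_{k=1}^{2d-1}(-2H-kz)$ and the argument, as written, proves a formula differing from the stated one by the factor $-2H$ in every degree $d\ge 1$. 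That missing $k=0$ factor does not come from cohomology of the domain curve at all. It enters either (a) through the local-geometry interpretation via Proposition \ref{QHT}: for $K_{\PP^1\ti\PP^1}$ the $\CC^*$-fixed locus evaluates into the zero section of the non-compact target, so the pushforward $\text{ev}_{\bullet*}$ in Definition \ref{BigI} multiplies the residue by the Euler class of the fiber direction, and the stated $\mathbf{I}^{\text{tw}}$ is exactly the object matching that series under the Segre functoriality; or, equivalently, (b) through the Coates--Givental hypergeometric modification for a negative twist, $\prod_{k=-2d+1}^{0}(-2H+kz)=\prod_{k=0}^{2d-1}(-2H-kz)$, which is the convention forced by the $E$-twisted Poincar\'e pairing (with $e^{\mathsf{T}}(\tilde{E}|_Y)$ inserted) used in Section 2 to define $\phi^i$, $\mathbf{S}$ and $\mathbf{V}$, and which is needed for Corollary \ref{SS}. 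You correctly flagged the index ranges as the main obstacle, but the mechanism you propose to settle them (a rank count of $R^1$) is not merely unverified --- it contradicts the stated range, so the proof needs one of the mechanisms above to close.
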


The function $\mathbf{I}^{\text{tw}}$ satisfies following Picard-Fuchs equation

\begin{multline}\label{PF}
    \left((z\frac{d}{dt})^4-1-q(2(z\frac{d}{dt})+z)(2(z\frac{d}{dt})+2z)\right.\\
    \left.\cdot(-2(z\frac{d}{dt}))(-2(z\frac{d}{dt})-z)\right)\mathbf{I}^{tw}=0\,.
\end{multline}

We now recall the functions $\mathbf{S}^{\text{tw}}_i(\gamma)$ defined in Section \ref{lightm}. Using Birkhoff factorization, an evaluation of the following series can be obtained from the $\mathbf{I}$-function, see \cite{KL}.

\begin{align}\label{S}
\nonumber\mathbf{S}^{\text{tw}}(1)=\frac{\mathbf{I}^{\text{tw}}}{\mathbf{I}^{\text{tw}}|_{t=0,H=1,z=\infty}}\,,\\
\mathbf{S}^{\text{tw}}(H)=\frac{z\frac{d}{dt}\mathbf{S}^{\text{tw}}(1)}{z\frac{d}{dt}\mathbf{S}^{\text{tw}}(1)|_{t=0,H=1,z=\infty}}\,,\\
\nonumber\mathbf{S}^{\text{tw}}(H^2)=\frac{z\frac{d}{dt}\mathbf{S}^{\text{tw}}(H)}{z\frac{d}{dt}\mathbf{S}^{\text{tw}}(H)|_{t=0,H=1,z=\infty}}\,,\\
\nonumber\mathbf{S}^{\text{tw}}(H^3)=\frac{z\frac{d}{dt}\mathbf{S}^{\text{tw}}(H^2)}{z\frac{d}{dt}\mathbf{S}^{\text{tw}}(H^2)|_{t=0,H=1,z=\infty}}\,.
\end{align}

For a series $\mathbf{Z}\in \CC[[\frac{1}{z}]]$, the specialization $\mathbf{Z}|_{z=\infty}$ denote constant term of $\mathbf{Z}$ with respect to $\frac{1}{z}$.

\subsubsection{Asymptotic expansion.}
The restriction $\mathbf{I}^{\text{tw}}|_{H=\xi_i}$ admits following asymptotic form

\begin{align*}
    \mathsf{I}^{\text{tw}}|_{H=\xi_i}=e^{\mu\xi_i/z}\left( R_0+R_1(\frac{z}{\xi_i})+R_2(\frac{z}{\xi_i})^2+\dots\right)\,
\end{align*}
for some series $\mu(q),R_k(q)\in\CC[[q]]$. Here $$\mathsf{I}^{\text{tw}}:=\mathbf{I}^{\text{tw}}|_{t=0}\,.$$
The series $\mu$ and $R_k$ are found by solving differential equations obtained from the coefficient of $z^k$ in \eqref{PF}. For example,
\begin{align*}
    1+\mathsf{D}\mu&=L\,,\\
    R_0&=L^{\frac{1}{2}}\,,\\
    R_1&=L^{\frac{1}{2}}(\frac{3}{32L}+\frac{1}{24}-\frac{13}{96}L^3)\,,
\end{align*}
where $L(q)=(1-16q)^{-\frac{1}{4}}$. Here $\mathsf{D}:=q\frac{q}{dq}$.

Now we return to the series $\mathbf{S}^{\text{tw}}(\gamma)$. Define the series $C_k$ for $k=1,2,3$, by
\begin{align}\label{Ck}
   \nonumber C_1&=z\frac{d}{dt}\mathbf{S}^{\text{tw}}(H)|_{t=0,H=1,z=\infty}\,,\\
    C_2&=z\frac{d}{dt}\mathbf{S}^{\text{tw}}(H^2)|_{t=0,H=1,z=\infty}\,,\\
    \nonumber C_3&=z\frac{d}{dt}\mathbf{S}^{\text{tw}}(H^3)|_{t=0,H=1,z=\infty}\,.
\end{align}
 
 As in \cite{ZaZi}, we can prove following relations,
\begin{align*}
C_1&=C_3 \,,\\
C_1C_2C_3&=L^4\,.
\end{align*}

Define new series for $\gamma \in H^*_{\mathsf{T}}(\PP^3,\QQ)$

$$\mathsf{S}^{\text{tw}}_i(\gamma):=\mathbf{S}^{\text{tw}}_i(\gamma)|_{t=0}\,.$$
The function $\mathsf{S}^{\text{tw}}_i(H^k)$ also admit the following asymptotic expansion:

\begin{align}\label{A}
    \nonumber  \mathsf{S}^{\text{tw}}_i(1)&=e^{\frac{\mu \xi_i}{z}}\left(R_{00}+R_{01}(\frac{z}{\xi_i})+R_{02}(\frac{z}{\xi_i})^2+\dots\right)\,,\\
    \mathsf{S}^{\text{tw}}_i(H)&=e^{\frac{\mu \xi_i}{z}}\frac{L\xi_i}{C_1}\left(R_{10}+R_{11}(\frac{z}{\xi_i})+R_{12}(\frac{z}{\xi_i})^2+\dots\right)\,,\\
    \nonumber \mathsf{S}^{\text{tw}}_i(H^2)&=e^{\frac{\mu \xi_i}{z}}\frac{L^2\xi_i^2}{C_1C_2}\left(R_{20}+R_{21}(\frac{z}{\xi_i})+R_{22}(\frac{z}{\xi_i})^2+\dots\right)\,,\\
    \nonumber  \mathsf{S}^{\text{tw}}_i(H^3)&=e^{\frac{\mu \xi_i}{z}}\frac{L^3\xi_i^3}{C_1C_2C_3}\left(R_{30}+R_{31}(\frac{z}{\xi_i})+R_{32}(\frac{z}{\xi_i})^2+\dots\right)\,,
\end{align}

We used here the normalization in \cite{ZaZi}. Note
$$R_{0k}=R_k\,.$$

As in \cite{ZaZi}, we obtain the following results.

\begin{Lemma}\label{R0}
 For all $k\ge 0$, we have
 \begin{align*}
     R_k\in\QQ[L^{\pm 1}]\,.
 \end{align*}
\end{Lemma}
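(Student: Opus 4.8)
```latex
The plan is to prove that $R_k \in \QQ[L^{\pm 1}]$ for all $k \ge 0$ by induction on $k$, extracting the relevant recursive structure from the Picard-Fuchs equation \eqref{PF}. The base case $R_0 = L^{1/2}$ is not literally in $\QQ[L^{\pm 1}]$, so the statement presumably concerns the normalized series appearing in the asymptotic expansion after factoring out the overall $L^{1/2}$; following \cite{ZaZi}, I would set $R_k = L^{1/2}\cdot \widetilde{R}_k$ (or work with the ratios $R_k/R_0$) and show the normalized quantity lies in $\QQ[L^{\pm 1}]$. The explicit formulas given for $R_0$ and $R_1$ show exactly this pattern, with $R_1 = L^{1/2}(\tfrac{3}{32}L^{-1} + \tfrac{1}{24} - \tfrac{13}{96}L^3)$ a Laurent polynomial in $L$ times $L^{1/2}$.

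The key computational input is that $1 + \mathsf{D}\mu = L$, where $\mathsf{D} = q\frac{d}{dq}$, together with the fact that $L$ satisfies an algebraic differential relation: from $L = (1-16q)^{-1/4}$ one computes $\mathsf{D}L = 4q\frac{d}{dq}(1-16q)^{-1/4}\cdot\tfrac{1}{4}$, which simplifies to the clean identity $\mathsf{D}L = L(L^4-1)$. This is the crucial closure property: the ring $\QQ[L^{\pm 1}]$ is preserved by the operator $\mathsf{D}$, since $\mathsf{D}$ acts on a Laurent monomial $L^n$ by $\mathsf{D}(L^n) = nL^{n-1}\mathsf{D}L = nL^n(L^4-1) \in \QQ[L^{\pm 1}]$. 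First I would establish this differential identity for $L$ and record that $(\QQ[L^{\pm 1}], \mathsf{D})$ is a differential ring.

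Next I would substitute the asymptotic ansatz $\mathsf{I}^{\text{tw}}|_{H=\xi_i} = e^{\mu\xi_i/z}\sum_k R_k (z/\xi_i)^k$ into the Picard-Fuchs equation \eqref{PF} and collect the coefficient of each power of $z$. Writing $z\frac{d}{dt} = z\mathsf{D}/\mathsf{D}t$ and using $1+\mathsf{D}\mu = L$ to evaluate how the operator $z\frac{d}{dt}$ acts on $e^{\mu\xi_i/z}$ (each application produces a factor involving $\xi_i L$ plus a $z\mathsf{D}$ correction on the $R_k$), the vanishing of the coefficient of $z^{k}$ yields a first-order inhomogeneous ODE of the schematic form $\mathsf{D}R_k = (\text{polynomial in } L)\cdot R_k + (\text{$\QQ[L^{\pm 1}]$-combination of } R_0,\dots,R_{k-1})$. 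Because the operator $\mathsf{D}$ preserves $\QQ[L^{\pm 1}]$ and the inhomogeneous term lies in $\QQ[L^{\pm 1}]$ by the inductive hypothesis, solving this recursion keeps $R_k$ (after the $L^{1/2}$ normalization) inside $\QQ[L^{\pm 1}]$; the solution is determined up to the integration constant fixed by the prescribed form of the asymptotic expansion.

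The main obstacle I anticipate is bookkeeping in the expansion: correctly tracking the leading-coefficient normalization $R_0 = L^{1/2}$ against the remaining $R_k$, and verifying that when the operator $(z\frac{d}{dt})^4$ is applied to the product $e^{\mu\xi_i/z}\sum_k R_k(z/\xi_i)^k$, the $z^{-1}$ contributions from differentiating $e^{\mu\xi_i/z}$ combine with the $z\mathsf{D}R_k$ terms so that the coefficient of $z^k$ really does close up into a $\QQ[L^{\pm 1}]$-linear recursion with no stray half-integer powers or non-Laurent terms surviving. One must confirm that the $L^{1/2}$ prefactor threads through consistently so that $R_k/R_0 \in \QQ[L^{\pm1}]$, which is the precise content carried by the inductive step. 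The explicitly computed $R_1$ serves as a verification that the recursion produces the expected Laurent-polynomial structure.
```
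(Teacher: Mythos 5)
Your setup is the right one, and it matches what the paper itself intends: the paper offers no argument at all beyond the sentence ``As in \cite{ZaZi}, we obtain the following results,'' and the method behind that citation is exactly what you describe --- substitute the expansion $e^{\mu\xi_i/z}\sum_k R_k(z/\xi_i)^k$ into the Picard--Fuchs equation \eqref{PF}, use $1+\mathsf{D}\mu=L$ so that each $z\frac{d}{dt}$ acts as $\xi_i L+z\mathsf{D}$ after conjugating by the exponential prefactor, and extract coefficients of powers of $z$ to get a first-order recursion. Your observation that the statement must be read modulo the overall factor $R_0=L^{1/2}$ (i.e.\ as $R_k/R_0\in\QQ[L^{\pm1}]$) is also correct and is a genuine imprecision in the paper. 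A minor slip: $\mathsf{D}L=\tfrac{1}{4}L(L^4-1)$, not $L(L^4-1)$; this does not affect the structure of the argument.

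The genuine gap is at the decisive step of the induction. The coefficient of $z^{k+1}$ gives $a(L)\,\mathsf{D}R_k+b(L)\,R_k=(\text{known lower terms})$, and since $L^{1/2}$ solves the homogeneous equation, writing $R_k=L^{1/2}\widetilde{R}_k$ reduces this to $\mathsf{D}\widetilde{R}_k=G_k$ with $G_k\in\QQ[L^{\pm1}]$ by induction. You then assert that because $\QQ[L^{\pm1}]$ is closed under $\mathsf{D}$, the solution stays in the ring. That is a non sequitur: closure under differentiation says nothing about closure under integration. Concretely, $\mathsf{D}\log L=\tfrac14(L^4-1)\in\QQ[L^{\pm1}]$, so an inhomogeneous term lying in the ring can perfectly well force a logarithm into the solution. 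Since $\frac{dq}{q}=\frac{4\,dL}{L(L^4-1)}$, one has $\widetilde{R}_k=c_k+\int G_k(L)\,\frac{4\,dL}{L(L^4-1)}$, and the assertion $\widetilde{R}_k\in\QQ[L^{\pm1}]$ is precisely the statement that all residues of $\frac{4G_k(L)}{L(L^4-1)}$ at $L=0$ and at the four roots of $L^4=1$ vanish. This residue vanishing is the entire mathematical content of the lemma; it is a special property of this particular hypergeometric $\mathbf{I}$-function, and it is exactly what the paper outsources to the combinatorial analysis of Zagier--Zinger. Without proving it (by the method of \cite{ZaZi} or an equivalent direct argument), your induction does not close.
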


By applying \eqref{S} to the asymptotic form \eqref{A}, we obtain the following results.

\begin{Lemma}\label{R}We have
 \begin{align*}
     R_{1p+1}&=R_{0p+1}+\frac{DR_{0p}}{L}\,,\\
     R_{2p+1}&=R_{1p+1}+\frac{DR_{1p}}{L}+(\frac{DL}{L^2}-\frac{\mathcal{X}}{L})R_{1p}\,,\\
     R_{3p+1}&=R_{2p+1}+\frac{DR_{2p}}{L}+(\frac{\mathcal{X}}{L}-2\frac{DL}{L^2})R_{2p}\,.\\
     R_{0p+1}&=R_{3p+1}+\frac{DR_{3p}}{L}-\frac{DL}{L^2}R_{3p}\,.
 \end{align*}
 where $\mathcal{X}:=\frac{DC_1}{C_1}$.
\end{Lemma}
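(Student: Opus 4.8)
The plan is to convert the Birkhoff recursion \eqref{S} into a recursion among the asymptotic coefficients $R_{kp}$ by evaluating at a fixed point $H=\xi_i$ and reading off powers of $z/\xi_i$. The one structural input I need is the action of the divisor operator on the $\mathbf{I}$-function: since the explicit formula for $\mathbf{I}^{\text{tw}}$ depends on $t$ and $q$ only through the combination $qe^{t}$ together with the prefactor $e^{tH/z}$, one has the exact operator identity $z\frac{d}{dt}\mathbf{I}^{\text{tw}}=(H+z\mathsf{D})\mathbf{I}^{\text{tw}}$ with $\mathsf{D}=q\frac{d}{dq}$. Restricting to $t=0$ and $H=\xi_i$, the operator $z\frac{d}{dt}$ therefore acts as $\xi_i+z\mathsf{D}$ on each fixed-point restriction $\mathsf{S}^{\text{tw}}_i(H^k)$. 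Combined with \eqref{S} and the normalizations \eqref{Ck}, this should give the clean one-step recursion
\[
\mathsf{S}^{\text{tw}}_i(H^{k+1})=\frac{(\xi_i+z\mathsf{D})\,\mathsf{S}^{\text{tw}}_i(H^{k})}{C_{k+1}},
\]
where $C_{k+1}$ is exactly the constant that restores the prefactor pattern $L^{k}\xi_i^{k}/(C_1\cdots C_k)$ of \eqref{A} (equivalently, the one forcing the leading coefficient to remain $R_{k0}=R_0$, matching $R_{0k}=R_k$). Verifying that the $z=\infty$, $H=1$ normalization appearing in \eqref{S} really produces this single factor $C_{k+1}$ at the fixed point, with no surviving logarithmic-derivative corrections, is the first point I would nail down; this is precisely where the Zagier–Zinger normalization of \cite{ZaZi} enters.

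Next I would substitute the ansatz \eqref{A}, writing $\mathsf{S}^{\text{tw}}_i(H^k)=e^{\mu\xi_i/z}P_k$ with $P_k=W_k\xi_i^k\sum_p R_{kp}(z/\xi_i)^p$ and $W_k:=L^{k}/(C_1\cdots C_k)$. The exponential only enters through
\[
(\xi_i+z\mathsf{D})\bigl(e^{\mu\xi_i/z}P_k\bigr)=e^{\mu\xi_i/z}\bigl(\xi_i L\,P_k+z\mathsf{D}P_k\bigr),
\]
where I have used $1+\mathsf{D}\mu=L$ to combine $\xi_i$ with $\xi_i\,\mathsf{D}\mu$. Dividing by $C_{k+1}$ and matching the coefficient of $(z/\xi_i)^{p+1}$, the prefactor ratio collapses to $W_k/(C_{k+1}W_{k+1})=1/L$, and one obtains the master recursion
\[
R_{k+1,p+1}=R_{k,p+1}+\frac{\mathsf{D}R_{kp}}{L}+\frac{1}{L}\frac{\mathsf{D}W_k}{W_k}\,R_{kp}.
\]
The term $\mathsf{D}R_{kp}/L$ is produced by $z\mathsf{D}$ hitting $R_{kp}$, while the last term is produced by $z\mathsf{D}$ hitting the $q$-dependent prefactor $W_k$; at $p=0$ the recursion reduces to $R_{k+1,0}=R_{k0}$, consistent with the common normalization $R_{k0}=R_0$.

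It then remains to evaluate $\tfrac{1}{L}\tfrac{\mathsf{D}W_k}{W_k}$ for $k=0,1,2,3$ using $\frac{\mathsf{D}W_k}{W_k}=k\frac{\mathsf{D}L}{L}-\sum_{j=1}^{k}\frac{\mathsf{D}C_j}{C_j}$ together with the relations $C_1=C_3$ and $C_1C_2C_3=L^4$ established just above (so that $\frac{\mathsf{D}C_2}{C_2}=4\frac{\mathsf{D}L}{L}-2\mathcal{X}$ and $\frac{\mathsf{D}C_3}{C_3}=\mathcal{X}$, with $\mathcal{X}=\mathsf{D}C_1/C_1$). A direct computation gives the four coefficients $0$, $\frac{\mathsf{D}L}{L^2}-\frac{\mathcal{X}}{L}$, $\frac{\mathcal{X}}{L}-2\frac{\mathsf{D}L}{L^2}$, and $-\frac{\mathsf{D}L}{L^2}$, which are exactly the correction terms of the four stated relations. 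The fourth relation is special: it is the step from $H^3$ to $H^4$, which must close back onto $\mathsf{S}^{\text{tw}}_i(1)$. Here I would invoke the Picard–Fuchs equation \eqref{PF}, whose leading part expresses $(z\frac{d}{dt})^4\mathbf{I}^{\text{tw}}$ in terms of the identity; this forces the cycle to close with effective normalization $C_4=L^4/(C_1C_2C_3)=1$ and $W_4=W_0$, so that the recursion returns to $R_{0,p+1}$ with the correction $\frac{1}{L}\frac{\mathsf{D}W_3}{W_3}=-\frac{\mathsf{D}L}{L^2}$.

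The main obstacle I anticipate is the first step rather than the arithmetic: pinning down the exact normalization and checking that passing from the $H=1$, $z=\infty$ definitions \eqref{S}–\eqref{Ck} to the fixed-point asymptotics \eqref{A} introduces precisely the single factor $C_{k+1}$ and no spurious $q$-derivative terms. Once the recursion $\mathsf{S}^{\text{tw}}_i(H^{k+1})=(\xi_i+z\mathsf{D})\mathsf{S}^{\text{tw}}_i(H^k)/C_{k+1}$ is secured, the coefficient extraction and the simplification via $C_1=C_3$, $C_1C_2C_3=L^4$ are routine, and the only other delicate input is the Picard–Fuchs closure of the four-step cycle.
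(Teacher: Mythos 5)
Your proposal is correct and follows essentially the same route as the paper: the paper's proof is exactly to apply the Birkhoff relations \eqref{S} (together with $\mathbf{S}^{\text{tw}}(1)=\mathbf{S}^{\text{tw}}(H^4)$ for the fourth identity) to the asymptotic forms \eqref{A}, using $1+\mathsf{D}\mu=L$ and the relations $C_1=C_3$, $C_1C_2C_3=L^4$ to extract the coefficient recursions, which is precisely your computation. The normalization worry you flag is resolved by definition, since \eqref{Ck} defines $C_{k+1}$ to be exactly the denominator appearing in \eqref{S}, so no spurious terms can arise.
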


The last equality come from 
\begin{align*}
    \mathbf{S}^{\text{tw}}(1)=\mathbf{S}^{\text{tw}}(H^4)=\frac{z\frac{d}{dt}\mathbf{S}^{\text{tw}}(H^3)}{z\frac{d}{dt}\mathbf{S}^{\text{tw}}(H^3)|_{t=0,H=1,z=\infty}}\,.
\end{align*}

After setting $p=1$ in Lemma \ref{R}, we obtain the following relation:

\begin{align}\label{drule}
    \mathcal{X}^2-(L^4-1)\mathcal{X}-\frac{1}{4}(L^4-1)+\mathsf{D}\mathcal{X}=0\,.
\end{align}

From Proposition \ref{R0}, Lemma \ref{R} and \eqref{drule}, we obtain the following results.

\begin{Lemma}\label{Rp}
  For all $k\ge 0$, we have
  \begin{align*}
      R_{1k}\,, \,R_{3k}\in\CC[L^{\pm 1}]\,,\\
      R_{2k}=Q_{2k}-\frac{R_{1k-1}}{L}\mathcal{X}\,,
  \end{align*}
  for some $Q_{2k}\in\CC[L^{\pm 1}]$.
\end{Lemma}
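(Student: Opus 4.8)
The plan is to induct on $k$, carrying along the refined assertion that $R_{2k}$ is \emph{affine} in $\mathcal{X}$ with a prescribed linear coefficient. Two closure facts drive everything. First, since $L=(1-16q)^{-1/4}$ satisfies $\mathsf{D}L=\frac14 L(L^4-1)$ (where $\mathsf{D}=q\,\frac{d}{dq}$ is the operator $D$ appearing in Lemma~\ref{R}), the ring $\CC[L^{\pm1}]$ is stable under $\mathsf{D}$. Second, rewriting \eqref{drule} as $\mathsf{D}\mathcal{X}=-\mathcal{X}^2+(L^4-1)\mathcal{X}+\frac14(L^4-1)$ shows $\CC[L^{\pm1}][\mathcal{X}]$ is $\mathsf{D}$-stable, the only subtlety being that $\mathsf{D}$ may raise the $\mathcal{X}$-degree. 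The inductive hypothesis at level $p$ reads: $R_{1p},R_{3p}\in\CC[L^{\pm1}]$ and $R_{2p}=Q_{2p}-\frac{R_{1,p-1}}{L}\mathcal{X}$ with $Q_{2p}\in\CC[L^{\pm1}]$; recall also $R_{0p}\in\CC[L^{\pm1}]$ for every $p$ by Lemma~\ref{R0}. The base case $k=0$ is immediate from the convention $R_{i,-1}=0$ (the asymptotic series begin in degree $0$), which forces $R_{10}=R_{20}=R_{30}=R_{00}=R_0$.

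For the inductive step I would process the four recursions of Lemma~\ref{R} in order. Step~1 is free: $R_{1,p+1}=R_{0,p+1}+\frac{\mathsf{D}R_{0p}}{L}$ lies in $\CC[L^{\pm1}]$ because $R_{0,p+1},R_{0p}\in\CC[L^{\pm1}]$ by Lemma~\ref{R0} and $\CC[L^{\pm1}]$ is $\mathsf{D}$-stable. Step~2 is where $\mathcal{X}$ first enters: in
\[
R_{2,p+1}=R_{1,p+1}+\frac{\mathsf{D}R_{1p}}{L}+\Big(\frac{\mathsf{D}L}{L^2}-\frac{\mathcal{X}}{L}\Big)R_{1p},
\]
the only non-$\CC[L^{\pm1}]$ contribution is $-\frac{R_{1p}}{L}\mathcal{X}$, so putting $Q_{2,p+1}:=R_{1,p+1}+\frac{\mathsf{D}R_{1p}}{L}+\frac{\mathsf{D}L}{L^2}R_{1p}\in\CC[L^{\pm1}]$ yields exactly $R_{2,p+1}=Q_{2,p+1}-\frac{R_{1p}}{L}\mathcal{X}$, the desired affine form with the predicted coefficient $-R_{1,(p+1)-1}/L$. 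I would record the resulting formula $Q_{2p}=R_{1p}+\frac{\mathsf{D}R_{1,p-1}}{L}+\frac{\mathsf{D}L}{L^2}R_{1,p-1}$ for use in the next step.

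The crux is Step~3, namely $R_{3,p+1}\in\CC[L^{\pm1}]$, i.e.\ complete cancellation of the $\mathcal{X}$-dependence in
\[
R_{3,p+1}=R_{2,p+1}+\frac{\mathsf{D}R_{2p}}{L}+\Big(\frac{\mathcal{X}}{L}-\frac{2\mathsf{D}L}{L^2}\Big)R_{2p}.
\]
Writing $R_{2p}=Q_{2p}-\frac{r}{L}\mathcal{X}$ with $r:=R_{1,p-1}$, the quadratic terms arise from two sources: differentiating $R_{2p}$ produces $-\frac{r}{L}\mathsf{D}\mathcal{X}$, whose $\mathcal{X}^2$-part is $+\frac{r}{L}\mathcal{X}^2$ by \eqref{drule}, contributing $+\frac{r}{L^2}\mathcal{X}^2$ to $\frac{\mathsf{D}R_{2p}}{L}$, while $\frac{\mathcal{X}}{L}R_{2p}$ contributes $-\frac{r}{L^2}\mathcal{X}^2$; these cancel. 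For the linear part, substituting the recorded formula for $Q_{2p}$ collapses the total coefficient of $\mathcal{X}$ to a multiple of $\frac{4r\,\mathsf{D}L}{L^2}-\frac{r(L^4-1)}{L}$, which vanishes identically by $\mathsf{D}L=\frac14 L(L^4-1)$. Hence $R_{3,p+1}\in\CC[L^{\pm1}]$, closing the induction; Step~4 (the recursion for $R_{0,p+1}$) is then automatically consistent and is precisely the closure relation from which \eqref{drule} was extracted. I expect this Step~3 cancellation—the simultaneous vanishing of the quadratic terms (via \eqref{drule}) and of the linear term (relying on the exact coefficient $-R_{1,p-1}/L$ furnished by Step~2 together with the explicit value of $\mathsf{D}L$)—to be the sole genuine obstacle, the remaining steps being formal consequences of the two closure facts.
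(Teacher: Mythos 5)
Your proof is correct and takes essentially the same approach the paper intends: the paper's entire justification of Lemma \ref{Rp} is the one-line citation of Lemma \ref{R0}, Lemma \ref{R} and \eqref{drule}, and your induction---$\mathsf{D}$-stability of $\CC[L^{\pm1}]$, the affine-in-$\mathcal{X}$ form of $R_{2k}$ with the exact coefficient $-R_{1,k-1}/L$, and the simultaneous cancellation in the $R_{3,p+1}$ recursion of the quadratic terms (via \eqref{drule}) and the linear terms (via $\mathsf{D}L=\tfrac14 L(L^4-1)$)---is precisely the computation those citations leave implicit. Your decision to carry the explicit formula $Q_{2p}=R_{1p}+\frac{\mathsf{D}R_{1,p-1}}{L}+\frac{\mathsf{D}L}{L^2}R_{1,p-1}$ through the induction is the right strengthening of the hypothesis, since the linear cancellation genuinely needs that specific value of $Q_{2p}$ and not merely membership in $\CC[L^{\pm1}]$.
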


\subsection{Quantum Lefschetz hyperplane section theorem.}

To apply the higher genus formula for $K_{\PP^1\ti\PP^1}$, one need to know the series $\mathbf{S}(\gamma)$ for $\gamma\in H_{\mathsf{T}}^*(K_{\PP^1\ti\PP^1})$. Since it is dificult to study $\mathbf{S}$-operator of $K_{\PP^1\ti\PP^1}$ directly, we study it via $\mathbf{S}$-operator of twisted theory of $\PP^3$ studied in Section \ref{TT}. 

 Consider the Segre embedding 
\begin{align*}
    \iota:\PP^1\ti\PP^1\rightarrow\PP^3\,.
\end{align*}

Let $\mathsf{T}=(\CC^*)^2\ti(\CC^*)^2$ act on $\PP^1\ti\PP^1$ diagonaly on each component with weight $\lambda_0,\lambda_1,\lambda_2,\lambda_3$. This action uniquely lift to $\PP^3$. We will use the following specializations:
\begin{align*}
    \lambda_0=\frac{1+i}{2}\,,\lambda_1=-\frac{1+i}{2}\,,
    \lambda_2=\frac{1-i}{2}\,,
    \lambda_3=-\frac{1-i}{2}\,.
\end{align*}
Note above specialization is consistent with the specialization \eqref{Sp}.

Denote by $H^*_{\mathsf{T}}(K_{\PP^1\ti\PP^1})$ the cohomology ring of $K_{\PP^1\ti\PP^1}$ whose ring sturucture is equal to usual cohomology ring $H^*_{\mathsf{T}}(\PP^1\ti\PP^1)$ of $\PP^1\ti\PP^1$ with following cup product,

\begin{align*}
    a\cup_{K_{\PP^1\ti\PP^1}}b&=\int_{\PP^1\ti\PP^1}a\cup b\cup(-\frac{1}{2(H_1+H_2)})\,.
\end{align*}
Denote by $H^*_{\mathsf{T},tw}(\PP^3)$ the twisted cohomology ring of $\PP^3$ whose ring strucutre is equal to usual cohomology ring $H^*_{\mathsf{T}}$ of $\PP^3$ with following cup product,

\begin{align*}
     a\cup_{\PP^3,tw} b&=\int_{\PP^3}(-1)a\cup b\,.
\end{align*}
Pull-back map induced by $\iota$ gives us following isomorphism between above two cohomology rings with associated cup-products.

\begin{align}\label{Iso}
    \iota^*:H^*_{\mathsf{T},tw}(\PP^3)\rightarrow H^*_{\mathsf{T}}(K_{\PP^1\ti\PP^1})\,.
\end{align}

Let us use the same notation $\iota$ to denote the closed immersion of $\overline{Q}_{g,k}(\PP^1\ti\PP^1,(d_1,d_2))$ into $\overline{Q}_{g,k}(\PP^3,d)$. Recall the definition of obstruction bundle associated to each geometry \eqref{Obs1},\eqref{Obs2}. By the functoriality in \cite{KKP} we obtain the following results.

\begin{Prop}\label{QHT}
 We have
 \begin{multline*}
     \sum_{d_1+d_2=d}\iota_*(e(\text{{\em Obs}}(K_{\PP^1\ti\PP^1}))\cap[\overline{Q}_{g,k}(\PP^1\ti\PP^1,(d_1,d_2))])=\\e(\text{{\em Obs}}(\PP^3,\text{\em tw}))\cap [\overline{Q}_{g,k}(\PP^3,d)]\,.
 \end{multline*}
\end{Prop}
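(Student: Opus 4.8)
The plan is to realize $\PP^1\ti\PP^1$ through the Segre embedding as the smooth quadric $Q=\iota(\PP^1\ti\PP^1)\subset\PP^3$, the zero locus of a regular section $s$ of the convex line bundle $\mathcal{O}_{\PP^3}(2)$, and then to combine quantum Lefschetz for $Q\subset\PP^3$ with the compatibility of the negative twisting bundle. Since $\iota^*\mathcal{O}_{\PP^3}(1)=\mathcal{O}(1,1)$, a bidegree $(d_1,d_2)$ quasimap to $\PP^1\ti\PP^1$ composes to a degree $d=d_1+d_2$ quasimap to $\PP^3$; moreover $\iota^*\tilde{\mathcal{O}}_{\PP^3}(-2)=\tilde{E}$, the bundle of \eqref{Obs1} inducing $K_{\PP^1\ti\PP^1}$. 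The image of the induced closed immersion of moduli spaces is exactly the locus of quasimaps to $\PP^3$ factoring through $Q$, which decomposes as a disjoint union over the bidegrees $(d_1,d_2)$ with $d_1+d_2=d$.

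First I would apply the functoriality of \cite{KKP} to the regular embedding $Q\hookrightarrow\PP^3$ and the convex bundle $\mathcal{O}_{\PP^3}(2)$. The defining section $s$ induces a section of $R^0\pi_*f^*\tilde{\mathcal{O}}_{\PP^3}(2)$ whose zero locus is precisely the factoring locus; convexity ensures that the comparison of the two perfect obstruction theories under $\iota$ is the standard quantum Lefschetz one, giving
\[
\sum_{d_1+d_2=d}\iota_*[\overline{Q}_{g,k}(\PP^1\ti\PP^1,(d_1,d_2))]=e(R^0\pi_*f^*\tilde{\mathcal{O}}_{\PP^3}(2))\cap[\overline{Q}_{g,k}(\PP^3,d)]\,.
\]

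Next I would handle the negative bundle by pure compatibility. The universal curve and universal bundle over $\overline{Q}_{g,k}(\PP^3,d)$ restrict along $\iota$ to those over $\overline{Q}_{g,k}(\PP^1\ti\PP^1,(d_1,d_2))$, and under this restriction $f^*\tilde{\mathcal{O}}_{\PP^3}(-2)$ becomes $f^*\tilde{E}$; hence $\iota^*e(R^1\pi_*f^*\tilde{\mathcal{O}}_{\PP^3}(-2))=e(\text{Obs}(K_{\PP^1\ti\PP^1}))$. The projection formula then yields, for each fixed bidegree,
\[
\iota_*\big(e(\text{Obs}(K_{\PP^1\ti\PP^1}))\cap[\overline{Q}_{g,k}(\PP^1\ti\PP^1,(d_1,d_2))]\big)=e(R^1\pi_*f^*\tilde{\mathcal{O}}_{\PP^3}(-2))\cap\iota_*[\overline{Q}_{g,k}(\PP^1\ti\PP^1,(d_1,d_2))]\,.
\]
Summing over $d_1+d_2=d$, inserting the identity from the previous step, and recombining the two Euler classes by the Whitney formula into $e(\text{Obs}(\PP^3,\text{tw}))$ as in \eqref{Obs2} produces the claim.

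The hard part will be the first step: making the quantum Lefschetz comparison rigorous in the quasimap category. One must check that the factoring locus is cut out cleanly by the induced section of $R^0\pi_*f^*\tilde{\mathcal{O}}_{\PP^3}(2)$ in the presence of quasimap base points, and that the two obstruction theories match along $\iota$ in exactly the form demanded by \cite{KKP}; convexity of $\mathcal{O}_{\PP^3}(2)$ is what rules out a stray $R^1$ contribution that would otherwise break the comparison. The remaining manipulations, namely the projection formula and the Whitney sum identity, are formal.
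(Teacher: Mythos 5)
Your proposal is correct and is essentially the paper's own argument: the paper proves Proposition \ref{QHT} by a one-line appeal to the functoriality of \cite{KKP} applied to the Segre embedding, and your write-up --- quantum Lefschetz for the quadric $\PP^1\ti\PP^1\subset\PP^3$ cut out by a section of $\mathcal{O}_{\PP^3}(2)$, followed by the projection formula for the restricted $(-2)$-twist and the Whitney formula recombining the two Euler classes into $e(\text{Obs}(\PP^3,\text{tw}))$ --- is exactly the intended expansion of that citation. Your identification of the delicate point (the Lefschetz-type matching of obstruction theories for quasimaps with base points) is precisely the content the paper delegates to \cite{KKP}.
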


%Here $f$ is the universal morphisms associated to each moduli spaces.(We omit the subscript here since context is clear.) $\tilde{\mathcal{O}}_{\PP^n}(m)$ is the line bundle on the stack $[\CC^{n+1}/\CC^*]$ which induces the usual line bunlde $\mathcal{O}_{\PP^n}(m)$ on $\PP^n$

The following is immediate consequences of Proposition \ref{QHT}.

\begin{Cor} \label{SS}Under the isomorphism of \eqref{Iso}, we have 
\begin{align*}
    \overline{\mathsf{S}}((H_1+H_2)^k)=\iota^*(\mathsf{S}^\text{\em tw}(H^k))\,\,\,\text{for}\,\,k=0,1,2,3.
\end{align*}
 
\end{Cor}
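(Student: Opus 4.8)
The plan is to deduce the identity directly from Proposition~\ref{QHT} by means of the projection formula, taking care to match both the insertions and the fixed-point normalizations on the two sides. First I would unwind the definitions: setting $t=0$ kills all the infinitesimal markings, so that for each $\mathsf{T}$-fixed point $p_j$,
\[
\mathsf{S}^{\text{tw}}_j(H^k)=e_j\sum_{d\ge 0}q^{d}\int_{[\overline{Q}_{0,2}(\PP^3,d)]^{\vir}} e(\text{Obs}(\PP^3,\text{tw}))\,\frac{\text{ev}_1^*\phi_j}{z-\psi_1}\,\text{ev}_2^*(H^k)\,,
\]
and $\overline{\mathsf{S}}_j((H_1+H_2)^k)$ is the analogous expression with $\PP^3$ replaced by $\PP^1\ti\PP^1$, the degree $d$ replaced by the sum over $(d_1,d_2)$ with $q_1=q_2=q$, the obstruction $e(\text{Obs}(\PP^3,\text{tw}))$ replaced by $e(\text{Obs}(K_{\PP^1\ti\PP^1}))$, and $H^k$ replaced by $(H_1+H_2)^k$.

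Next I would record the compatibilities of the moduli-level Segre inclusion $\iota\colon\overline{Q}_{0,2}(\PP^1\ti\PP^1,(d_1,d_2))\hookrightarrow\overline{Q}_{0,2}(\PP^3,d)$ for $d_1+d_2=d$: the evaluation maps satisfy $\text{ev}_i^{\PP^3}\circ\iota=\iota\circ\text{ev}_i$, the cotangent lines at the markings depend only on the domain curve so that $\iota^*\psi_i=\psi_i$, and on the target $\iota^*H=H_1+H_2$ while $\iota^*\phi_j=\phi_j$ — the last because the four $\mathsf{T}$-fixed points of $\PP^1\ti\PP^1$ map bijectively to those of $\PP^3$ under the weight specialization \eqref{Sp}, which was arranged to be consistent. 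I would then apply Proposition~\ref{QHT} together with the projection formula: capping the integrand on $\overline{Q}_{0,2}(\PP^3,d)$ against $\sum_{d_1+d_2=d}\iota_*(\dots)$ and pushing forward turns the twisted $\PP^3$ correlator into the sum over $d_1+d_2=d$ of the corresponding $K_{\PP^1\ti\PP^1}$ correlators. After the specialization $q_1=q_2=q$ this yields the equality of the two double-bracket series \emph{before} the $e_j$ prefactor is applied.

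It remains to match the normalizations $e_j$ for the two theories. This is exactly the statement that \eqref{Iso} is an isometry for the two twisted Poincar\'e pairings, and concretely it follows from the normal bundle computation for the quadric $\PP^1\ti\PP^1\subset\PP^3$: its normal bundle is $\mathcal{O}(2)$, whose fiber over $p_j$ carries weight $2\lambda_{p_j}=2(H_1+H_2)|_{p_j}$, so that $e(T_{p_j}\PP^3)=2\lambda_{p_j}\,e(T_{p_j}(\PP^1\ti\PP^1))$, and comparing this with the two pairings forces the scalar $e_j$ to agree on both sides. Combining the matched prefactor with the equality of bare correlators gives $\mathsf{S}^{\text{tw}}_j(H^k)=\overline{\mathsf{S}}_j((H_1+H_2)^k)$ for every $j$; multiplying by $\phi_j$, summing over $j$, and using $\iota^*\phi_j=\phi_j$ produces the claimed equality of classes for $k=0,1,2,3$.

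The main obstacle is precisely this last bookkeeping step: one has to confirm that the two differently twisted pairings are genuinely intertwined by $\iota^*$, so that the fixed-point normalizations $e_j$ coincide rather than differing by a $j$-dependent weight factor. Once Proposition~\ref{QHT} and the isometry property of \eqref{Iso} are granted, the remaining ingredients — the projection formula and the compatibility of the evaluation maps and $\psi$-classes with $\iota$ — are entirely standard.
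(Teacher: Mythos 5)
Your proposal is correct and follows the same route as the paper: the paper simply declares Corollary \ref{SS} an ``immediate consequence'' of Proposition \ref{QHT}, and your argument is precisely the detailed unwinding of that claim via the projection formula, the compatibility of evaluation maps, $\psi$-classes, and fixed-point classes with the Segre embedding, and the matching of the normalizations $e_j$ (which indeed agree, since $e(T_{p_j}\PP^3)=2\lambda_{p_j}\,e(T_{p_j}(\PP^1\ti\PP^1))$ exactly compensates the ratio of the two twists). No gap; your bookkeeping of the $e_j$'s is the only nontrivial point and you resolve it correctly.
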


\subsection{Proof of Theorem \ref{Th1}}

By definition, we have
\begin{equation}\label{ffww}
A_2(q)= \frac{1}{L^4}\left(\mathcal{X}
+\frac{1}{2} -\frac{L^4}{4}\right)\, .
\end{equation}
Hence, statement (i),
$$\overline{\mathsf{F}}_g (q) \in \mathbb{C}[L^{\pm1}][A_2]\, ,$$
follows from Proposition \ref{VE}
and  Lemmas \ref{Rp}.
Since 
$$\frac{\partial}{\partial T} = \frac{1}{C_1}q\frac{ \partial}{\partial q}\,, $$
statement (iii),
\begin{equation}\label{vvtt}
\frac{\partial^k \overline{\mathsf{F}}_g}{\partial T^k}(q) \in \mathbb{C}[L^{\pm1}][A_2][C_1^{-1}]\, ,
\end{equation}
follows since the ring
$$\mathbb{C}[L^{\pm1}][A_2]=\mathbb{C}[L^{\pm1}][X]$$
is closed under the action of the differential operator $$\DD=q\frac{\partial}{\partial q}\, $$
by \eqref{drule}.
\qed

\subsection{Proof of Theorem \ref{Th2}}

Let $\Gamma\in\mathsf{G}_g(\PP^1\ti\PP^1)$ be a decorated graph. Let us fix an edge $f\in \mathsf{E}(\Gamma)$. If we break the $\Gamma$ at the edge $f$, we have two possibilities:
\begin{itemize}
 \item if $\Gamma$ is connected after breaking $f$, denote the resulting graph by
 $$\Gamma^0_f\in\mathsf{G}_{g-1,2}(\PP^1\ti\PP^1)$$
 \item if $\Gamma$ is disconnected after breaking $f$, denote the resulting two graphs by
 $$\Gamma^1_f\in\mathsf{G}_{g_1,1}(\PP^1\ti\PP^1)\,\, \text{and}\,\,\Gamma^2_f\in\mathbf{G}_{g_2,1}(\PP^1\ti\PP^1)$$
 where $g=g_1+g_2$.
\end{itemize}
There is no canonical order for the 2 new markings. We will always sum over the 2 labellings. So more precisely, the graph $\Gamma^0_f$ in case $\bullet$ should be viewed as sum of 2 graphs
$$\Gamma^0_{f,(1,2)}+\Gamma^0_{f,(2,1)}\,.$$
Similarly, in case $\bullet\bullet$, we will sum over the ordering of $g_1$ and $g_2$. As usual, the summation will be later compensated by a factor of $\frac{1}{2}$ in the formulas.

By Proposition \ref{VE}, we have
the following formula for the contribution 
of the graph $\Gamma$ to the quasimap
theory of $K_{\PP^1\ti\PP^1}$,
 $$
 \overline{\text{Cont}}_\Gamma
     =\frac{1}{|\text{Aut}(\Gamma)|}
     \sum_{\mathsf{A} \in \ZZ_{\ge 0}^{\mathsf{F}}} \prod_{v\in \mathsf{V}} 
     \overline{\text{Cont}}^{\mathsf{A}}_\Gamma (v)
     \prod_{e\in \mathsf{E}} \overline{\text{Cont}}^{\mathsf{A}}_\Gamma(e)\, .
 $$
The bar over $\text{Cont}_{\Gamma}$ in the above equation means restriction $q_1=q_2=q$ following the notation in Section \ref{No}.

Let $f$ connect the $\T$-fixed points $p_i, p_j \in \PP^1\ti\PP^1$. Let
the $\mathsf{A}$-values of the respective
half-edges be $(k,l)$. By Lemma \ref{Rp} and Corollary \ref{SS}, we have
\begin{equation}\label{Coeff}
\frac{\partial \overline{\text{Cont}}^{\mathsf{A}}_\Gamma(f)}{\partial \mathcal{X}} = (-1)^{k+l}\frac{ R_{1k-1} R_{1l-1}}{L\lambda_i^{k-2}\lambda_j^{l-2}}\, .
\end{equation}

\noindent $\bullet$ If $\Gamma$ is connected after breaking $f$, we have
%by Lemma \ref{L2}, we obtain
\begin{multline*}
%\frac{1}{\text{Aut}(\Gamma)}&\sum_{I \in \ZZ^{\Gamma(F)}} \frac{d Cont^I_\Gamma(f)}{dX} \prod_v Cont^I_\Gamma (v)\prod_{e \ne f}{Cont^I_\Gamma(e)} = 
\frac{1}{|\text{Aut}(\Gamma)|}
     \sum_{\mathsf{A} \in \ZZ_{\ge 0}^{\mathsf{F}}} 
     \left(\frac{L^4}{C^2_1}\right)
     \frac{\partial {\overline{\text{Cont}}}^{\mathsf{A}}_\Gamma(f)}{\partial \mathcal{X}} 
     \prod_{v\in \mathsf{V}} 
     \overline{\text{Cont}}^{\mathsf{A}}_\Gamma (v)
     \prod_{e\in \mathsf{E},\, e\neq f} \overline{\text{Cont}}^{\mathsf{A}}_\Gamma(e) \\=\frac{1}{2}
\overline{\text{Cont}}_{\Gamma^0_f}(H_1+H_2,H_1+H_2) \, .
\end{multline*}
The derivation is simply by using \eqref{Coeff} on the left
and Proposition \ref{VEL} on the right.
%Here, $\text{Cont}_{\Gamma^0_f}$ is the full contribution 
%of $\Gamma^0_f$ to 
%$$\sum_{d\geq 0} q^d %\left[\overline{Q}_{g-1,2}(K\PP^2,d)\right]^{\text{vir}}\, .$$

\vspace{5pt}
\noindent $\bullet\bullet$
If $\Gamma$ is disconnected after breaking $f$, we obtain
\begin{multline*}
\frac{1}{|\text{Aut}(\Gamma)|}
     \sum_{\mathsf{A} \in \ZZ_{\ge 0}^{\mathsf{F}}} 
     \left(\frac{L^4}{C^2_1}\right)
     \frac{\partial {\overline{\text{Cont}}}^{\mathsf{A}}_\Gamma(f)}{\partial \mathcal{X}} 
     \prod_{v\in \mathsf{V}} 
     \overline{\text{Cont}}^{\mathsf{A}}_\Gamma (v)
     \prod_{e\in \mathsf{E},\, e\neq f} \overline{\text{Cont}}^{\mathsf{A}}_\Gamma(e)\\
=\frac{1}{2}\overline{\text{Cont}}_{\Gamma^1_f}(H_1+H_2) \,
\overline{\text{Cont}}_{\Gamma^2_f}(H_1+H_2)\, 
\end{multline*}
by the same method.

By combining the above two equations for all 
the edges of all the graphs $\Gamma\in \mathsf{G}_g(\PP^1\ti\PP^1)$
and using the vanishing
\begin{align*}
\frac{\partial {\overline{\text{Cont}}}^{\mathsf{A}}_\Gamma(v)}{\partial \mathcal{X}}=0
\end{align*}
of Lemma \ref{R0}, we obtain
%Since
%\begin{align*}
% \frac{\partial \prod_e Cont^I_\Gamma(e)}{\partial X}=\frac{\partial Cont^I_\Gamma(f)}{\partial X}\prod_{e \ne f} Cont^I_\Gamma(e)+ othercombinations,
%\end{align*}
%combining above two equations for all possible edges, we conclude
\begin{multline}\label{greww}
\left(\frac{L^4} {C^2_1}\right) \frac{\partial}{\partial \mathcal{X}} 
\overline{\mathsf{F}}_g=\\ \frac{1}{2}\sum_{i=1}^{g-1}\overline{F}_{g-i,1}[H_1+H_2]
\overline{\mathsf{F}}_{i,1}[H_1+H_2] + \frac{1}{2} \overline{\mathsf{F}}_{g-1,2}[H_1+H_2,H_1+H_2]\, .
\end{multline}

Combining the divisor equation in Gromov-Witten theory and the wall-crossing theorem, we obtain the divisor equation in quasimap invariants.
\begin{align*}
    \overline{\mathsf{F}}_{g,1}[H_1+H_2]=\frac{\partial \overline{\mathsf{F}}_g}{\partial T}\,,\\
    \overline{\mathsf{F}}_{g,2}[H_1+H_2,H_1+H_2]=\frac{\partial^2 \overline{\mathsf{F}}_g}{\partial T^2}
\end{align*}

The above equations transform \eqref{greww} into exactly the holomorphic anomaly equation in Theorem \ref{Th2},

$$\frac{1}{C_1^2}\frac{\partial \overline{\mathsf{F}}_{g}}{\partial{A_2}}(q)
= \frac{1}{2}\sum_{i=1}^{g-1} 
\frac{\partial \overline{\mathsf{F}}_{g-i}}{\partial{T}}(q)
\frac{\partial \overline{\mathsf{F}}_{i}}{\partial{T}}(q)
+
\frac{1}{2}
\frac{\partial^2 \overline{\mathsf{F}}_{g-1}}{\partial{T}^2}(q)\,
$$
as an equality in $\CC[[q]]$. We can lift the above identity to the ring $\CC[L^{\pm 1}][A_2,C_1^{-1}]$ by the same method in \cite[Section 7.2]{LP1}.
\qed

\

\section{Genus one quasimap invariant of Calabi-Yau hypersurface in toric variety.} 
\subsection{Overview}
We review here the method in \cite{KL} of computing genus one quasimap invariants.

%Let $Y=\tilde{Y}/\!\!/_{\!\theta}\mathsf{G}$ be a toric variety with fixed GIT representation with vector space $\tilde{Y}$. Let $E$ be a $\mathsf{G}$-representation space. Let $s$ be a $\mathsf{G}$ equivariant map from $\tilde{Y}$ to $E$ whose zero locus $\tilde{X}$ has only locally complete intersection singularities. Assume that the semistable locus $\tilde{Y}^{ss}(\theta)$ is nonsingular. $E$ induces the vector bundle $\tilde{E}$ on $[\tilde{Y}/\mathsf{G}]$. Denote by $X$ the GIT quotient $$\tilde{X}/\!\!/_{\!\theta}\mathsf{G}\,.$$

Let X be a generic section of degree $(m,n)$ in $Y:=\PP^{m-1}\ti\PP^{n-1}$. The quasimap invariants of $X$ is defined in Section 2. Now consider the imbedding

$$\iota:X \rightarrow Y\,.$$

By functoriality in \cite{KKP} we have following identity for $g=0,1$,

\begin{align}\label{Func}
\iota_*([\overline{Q}_{g,0}(X,(d_1,d_2))]^{\text{vir}})=e(\text{Obs})\cap [\overline{Q}_{g,0}(Y,(d_1,d_2))]^{\text{vir}}\,,
\end{align}
where
$$
e(\text{Obs})=e(R^1\pi_*f^*\tilde{E})\,.$$
Here $\tilde{E}$ is the line bundle on
$$[\CC^m\ti\CC^n / \CC^*\ti\CC^*]$$ which induces the line bundle $\mathcal{O}_{\PP^{n-1}\ti\PP^{m-1}}(n,m)$ on $\PP^{n-1}\ti\PP^{m-1}$.

$$f : \mathcal{C}\rightarrow [\CC^m\ti\CC^n/\CC^*\ti\CC^*]\,,\, \pi: \mathcal{C}\rightarrow \overline{Q}_{g,0}(\PP^{m-1}\ti\PP^{n-1},(d_1,d_2))$$
is the standard universal structures. While there is no torus action on $X$, we can use torus localization theorem on the right side of \eqref{Func} with $\mathsf{T}:=(\CC^*)^m\ti(\CC^*)^n$ acting on $\PP^{m-1}\ti\PP^{n-1}$ standardly. 

The genus one quasimap potential function of $X$ was defined in Section 1,

\begin{align*}\mathsf{F}_1(q_1,q_2):=&\sum_{d_1,d_2\ge 0}q_1^{d_1}q_2^{d_2}\int_{[\overline{Q}_{1,0}(X,(d_1,d_2))]^{\text{vir}}}1\,\\
=&\sum_{d_1,d_2\ge 0}q_1^{d_1}q_2^{d_2}\int_{[\overline{Q}_{1,0}(\PP^{m-1}\ti\PP^{m-1},(d_1,d_2))]^{\text{vir}}}e(\text{Obs}) \,.
\end{align*}

Now we recall the series defined in Section 2 for X. From \cite{CKg0}, we have following asymtotic expansion of the function $\mathbf{I}$ after restriction to each fixed point $p_i\in (Y)^{\mathsf{T}}$,

\begin{align*}
    \mathbf{I}|_{p_i}=e^{\frac{\mathbf{U}_i}{z}}\left(\sum_{k=0}^{\infty}\mathbf{R}_{ik}z^k\right)
\end{align*}
for some unique series $\mathbf{R}_{ik}\in\CC(\lambda)[[t_1,t_2,q_1,q_2]]$.
Here $$\mathbf{U}_i=\lan\lan1,1\ran\ran^{0+,p_i}_{0,2}\,.$$

\noindent Let $c_i(\lambda)$ be the element in $\CC(\lambda)$ uniquely determined by 
$$1+c_i(\lambda)e(\mathbb{E})=\frac{e^{\mathsf{T}}(\mathbb{E}^{*}\otimes T_{p_i}Y)e^{\mathsf{T}}(\tilde{E}|_{p_i})}{e^{\mathsf{T}}(T_{p_i}Y)e^{\mathsf{T}}(\mathbb{E}^*\ot \tilde{E}|_{p_i})} \, ,$$ 
where $\mathbb{E}$ is the Hodge bundle on the moduli stack $\overline{M}_{1,1}$ of stable one pointed genus $1$ curves.

We recall the notations in Section \ref{No} for the series $\mathbf{R}_{ik}, \mathbf{U}_{i}, \mathbf{V}_{ii}$. The following theorem was proven in \cite{KL}.

\begin{Thm}\label{G1}
Let $\mathsf{F}_1(q_1,q_2)$ be the genus one quasimap potential function of $X$. For $k=1,2$ we have
 \begin{multline*} 
 q_k\frac{\partial}{\partial q_k} \mathsf{F}_1 =\sum_i q_k\frac{\partial}{\partial q_k}( - \text{log} \frac{\mathsf{R}_{i0}}{24}+c_i(\lambda)\frac{\mathsf{U}_i}{24} )\\
 +\frac{1}{2} \sum_i ({H_k}|_{p_i}+q_k\frac{\partial}{\partial q_k}\mathsf{U}_i) \text{lim}_{(x,y)\rightarrow (0,0)}\left(e^{-\mathsf{U}_i(\frac{1}{x}+\frac{1}{y})} e_i \mathsf{V}_{ii}(x,y)-\frac{1}{x+y}  \right) \,.
 \end{multline*}
\end{Thm}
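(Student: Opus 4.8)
The plan is to evaluate $\mathsf{F}_1$ by applying the virtual localization formula of \cite{GP} to $\int_{[\overline{Q}_{1,0}(Y,(d_1,d_2))]^{\vir}}e(\text{Obs})$ on the right-hand side of \eqref{Func}, using the $\mathsf{T}=(\CC^*)^m\ti(\CC^*)^n$-action on $Y=\PP^{m-1}\ti\PP^{n-1}$, and then to organize the result as a sum over the decorated graphs of Section \ref{locq}. Since the genus constraint reads $g=\sum_{v\in\mathsf{V}}\mathsf{g}(v)+h^1(\Gamma)=1$, every contributing graph is of exactly one of two types: it either carries a single genus $1$ vertex with all remaining vertices of genus $0$, or all its vertices have genus $0$ and it contains a single loop ($h^1(\Gamma)=1$). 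These two families will produce, respectively, the first and second sums in the statement. In both cases I would resum the genus $0$ trees hanging off the distinguished vertex or loop into the genus $0$ data of Section 2 — the operators $\mathbf{S}_i$, the two-point series $\mathbf{V}_{ii}$, and the coefficients $\mathbf{U}_i,\mathbf{R}_{ik}$ in the expansion $\mathbf{I}|_{p_i}=e^{\mathbf{U}_i/z}\big(\sum_k\mathbf{R}_{ik}z^k\big)$ — so that only the local genus $1$ vertex factor and the loop closure remain.

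First I would treat the genus $1$ vertices. At a genus $1$ vertex over $p_i$ the contracted genus $1$ component contributes an integral over $\overline{M}_{1,1}$ in which the Hodge bundle $\mathbb{E}$ enters through $R^1\pi_*\mathcal{O}$ and through the obstruction $R^1\pi_*f^*\tilde E$. Collecting these Euler classes produces precisely the ratio defining $c_i(\lambda)$,
\[
1+c_i(\lambda)e(\mathbb{E})=\frac{e^{\mathsf{T}}(\mathbb{E}^{*}\ot T_{p_i}Y)\,e^{\mathsf{T}}(\tilde E|_{p_i})}{e^{\mathsf{T}}(T_{p_i}Y)\,e^{\mathsf{T}}(\mathbb{E}^*\ot \tilde E|_{p_i})}\,.
\]
Since $e(\mathbb{E})^2=0$ and $\int_{\overline{M}_{1,1}}\psi_1=\int_{\overline{M}_{1,1}}e(\mathbb{E})=\tfrac1{24}$, only the part linear in $e(\mathbb{E})$ survives, and after resumming the attached genus $0$ legs into $\mathbf{S}$-operators the total genus $1$ vertex contribution collapses to $\sum_i\big(-\tfrac1{24}\log\mathsf{R}_{i0}+\tfrac{c_i(\lambda)}{24}\mathsf{U}_i\big)$; the logarithm is produced by resumming the string/dilaton-type recursion for the descendent insertion on the genus $1$ vertex. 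Applying $q_k\tfrac{\partial}{\partial q_k}$ then yields the first sum in the theorem.

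Next I would treat the loop graphs, in which every vertex has genus $0$ and $h^1(\Gamma)=1$. Cutting the loop along one of its edges produces a genus $0$ configuration carrying two extra markings whose insertions, being the two cut half-edges, both localize over a common fixed point $p_i$; after resumming the pendant genus $0$ trees this assembles into the diagonal two-point series $\mathbf{V}_{ii}$, with the basic genus $0$ identity $e_i\mathbf{V}_{ij}e_j=\big(\sum_k\mathbf{S}_i(\phi_k)|_{z=x}\mathbf{S}_j(\phi^k)|_{z=y}\big)/(x+y)$ as the essential input. Regluing the node identifies the two descendent variables and forces the limit $(x,y)\to(0,0)$; the subtraction of $\tfrac1{x+y}$ is exactly the unstable $(0,2)$ contribution regularized as in Proposition \ref{GR22}, which is why the combination $e^{-\mathsf{U}_i(1/x+1/y)}e_i\mathsf{V}_{ii}-\tfrac1{x+y}$ occurs. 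The automorphism interchanging the two sides of the cut supplies the factor $\tfrac12$, and $q_k\tfrac{\partial}{\partial q_k}$ acts through the divisor equation: the inserted divisor $H_k$ either restricts to the vertex, giving $H_k|_{p_i}$, or differentiates $e^{\mathsf{U}_i/z}$, giving $q_k\tfrac{\partial}{\partial q_k}\mathsf{U}_i$, together producing the prefactor $(H_k|_{p_i}+q_k\tfrac{\partial}{\partial q_k}\mathsf{U}_i)$.

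The step I expect to be hardest is the genus $1$ vertex analysis: one must track $\mathbb{E}$ correctly through both $R^1\pi_*\mathcal{O}$ and the twisted obstruction $R^1\pi_*f^*\tilde E$, check that their combined Euler class reduces to the $c_i(\lambda)$ ratio above, and carry out the resummation of all attached genus $0$ trees so that the $\log\mathsf{R}_{i0}$ term emerges cleanly. Matching the $q_k\tfrac{\partial}{\partial q_k}$-derivative to the divisor equation in the quasimap setting — where $X$ carries no torus action, so one works on the ambient $Y$ via \eqref{Func} — and justifying the regularized loop limit are the remaining technical points, but both reduce to the genus $0$ results of Section 2 together with the standard $(0,2)$ regularization of Proposition \ref{GR22}.
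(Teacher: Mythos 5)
The first thing to say is that this paper contains no proof of Theorem \ref{G1} to compare against: the sentence immediately preceding the statement reads ``The following theorem was proven in \cite{KL}'', and the result is imported wholesale from that reference. So the only meaningful comparison is with the localization proof in \cite{KL}, and measured against that, your outline reconstructs exactly the right strategy: apply virtual localization to the right-hand side of \eqref{Func}, use the constraint $g=\sum_{v}\mathsf{g}(v)+h^1(\Gamma)=1$ to split the decorated graphs into those with one genus-one vertex and those with one loop, resum all genus-zero tails into the Section 2 data ($\mathbf{S}_i$, $\mathbf{V}_{ii}$, $\mathbf{U}_i$, $\mathbf{R}_{ik}$), and match the two graph families with the two sums in the formula. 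Your identification of the genus-one vertex factor with the Euler-class ratio defining $c_i(\lambda)$, using $e(\mathbb{E})^2=0$ and $\int_{\overline{M}_{1,1}}\lambda_1=\int_{\overline{M}_{1,1}}\psi_1=\tfrac{1}{24}$, and your reading of the $\tfrac{1}{x+y}$ subtraction as the unstable $(0,2)$ regularization (the analogue of Proposition \ref{GR22}) with the $\tfrac12$ coming from the automorphism of the cut loop, are both faithful to how the formula arises there.

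That said, what you have written is a roadmap rather than a proof, and the two places you defer are precisely where the content of the theorem lives. First, the claim that the sum over all genus-zero trees attached to the genus-one vertex collapses to $-\tfrac{1}{24}\log\mathsf{R}_{i0}+\tfrac{c_i(\lambda)}{24}\mathsf{U}_i$ is asserted (``the logarithm is produced by resumming the string/dilaton-type recursion'') but not derived; this resummation is the heart of the argument in \cite{KL} and requires the full vertex/edge contribution formalism — the genus-one analogue of Propositions \ref{GR1}, \ref{GR2} and \ref{VE} of this paper — to see why the leading asymptotic coefficient $\mathsf{R}_{i0}$ of $\mathbf{I}|_{p_i}$ enters through a logarithm. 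Second, the loop prefactor $\big(H_k|_{p_i}+q_k\tfrac{\partial}{\partial q_k}\mathsf{U}_i\big)$ is attributed to ``the divisor equation'' without showing why differentiating the loop-graph sum produces exactly this factor multiplying the regularized $(x,y)\to(0,0)$ limit; this bookkeeping (the derivative hitting either the edge data or the exponential factors $e^{\mathsf{U}_i/z}$) is a genuine computation, not a formality. So: correct strategy, same route as the actual source, but the decisive steps are named rather than performed.
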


\subsection{Calculations.}
We apply Theorem \ref{G1} to the hypersurface of degree $(2,n)$ in $\PP^1 \times \PP^{n-1}$. 
 
Let $X$ be the hypersurface of degree $(2,n)$ in  $\PP^1 \times \PP^{n-1}$. $X$ has curve class $(d_1,d_2)\in\NN \times\NN$ where $d_1$(resp. $d_2$) is degree inside $\PP^1$(resp. $\PP^{n-1}$). In this sections, we only consider curves class $(d_1,0)$.

\begin{Thm}\label{Th3} Let $\mathsf{F}_1$ be the genus one quasimap potential function of $X$. Then we have
 $$q\frac{d}{dq}\mathsf{F}_1(q,0)=\frac{n(n^2-n+2)}{2}(-\frac{1}{6}\frac{1}{1-4q}).$$
\end{Thm}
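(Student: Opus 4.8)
The plan is to prove Theorem \ref{Th3} as a direct application of the genus one master formula of Theorem \ref{G1}, specialized to $Y=\PP^1\times\PP^{n-1}$ (so $m=2$) and then evaluated after the restriction $q_2=0$ in the sense of Section \ref{No}. Putting $k=1$ in Theorem \ref{G1} and restricting $q_2=0$, I would write $q\frac{d}{dq}\mathsf{F}_1(q,0)$ as a finite sum over the $\mathsf{T}$-fixed points $p_{(a,b)}$ of $Y$, indexed by $a\in\{0,1\}$ (the two $\PP^1$-vertices) and $b\in\{0,\dots,n-1\}$ (the $\PP^{n-1}$-vertices), giving $2n$ terms in all. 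The key geometric observation is that for curve class $(d,0)$ every quasimap is concentrated in a $\PP^1$-fiber over a $\mathsf{T}$-fixed point of $\PP^{n-1}$, so the nontrivial $q$-dependence of the local series $\mathsf{U}_{(a,b)}$, $\mathsf{R}_{(a,b)0}$ and $\mathsf{V}_{(a,b)(a,b)}$ comes from a single-variable hypergeometric computation, while the $\PP^{n-1}$-directions enter only through equivariant weights.

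First I would compute the genus zero ingredients from the restricted big $I$-function. Restricting $\mathbf{I}$ to $p_{(a,b)}$ and to $q_2=0$ produces an explicit hypergeometric series in $q$ whose numerator records the degree-$(2,n)$ twist (contributing the shift $2\lambda_a+n\mu_b$, with $\lambda_a$ the $\PP^1$-weights and $\mu_b$ the $\PP^{n-1}$-weights) and whose denominator records the $\PP^1$-direction. Its asymptotic form $\mathbf{I}|_{p_{(a,b)}}=e^{\mathsf{U}_{(a,b)}/z}\sum_{k\ge 0}\mathsf{R}_{(a,b)k}z^k$ yields the mirror-type series $\mathsf{U}_{(a,b)}$ and the leading coefficient $\mathsf{R}_{(a,b)0}$, both governed by the discriminant $1-4q$ (since $m^m=4$), exactly as the series $\mu$ and $R_0$ of Section \ref{TT} are governed by $1-16q$. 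In parallel, I would compute the two-point series $\mathsf{V}_{(a,b)(a,b)}$ from the basic relation $e_i\mathbf{V}_{ij}e_j=(x+y)^{-1}\sum_k\mathbf{S}_i(\phi_k)|_{z=x}\mathbf{S}_j(\phi^k)|_{z=y}$ recalled in Section \ref{lightm}, and then evaluate the regularized limit $\lim_{(x,y)\to(0,0)}\big(e^{-\mathsf{U}_{(a,b)}(\frac{1}{x}+\frac{1}{y})}e_{(a,b)}\mathsf{V}_{(a,b)(a,b)}(x,y)-\frac{1}{x+y}\big)$ that appears in Theorem \ref{G1}.

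Next I would pin down the genus one local factor $c_{(a,b)}(\lambda)$ from its defining relation on $\overline{M}_{1,1}$: since the Hodge bundle $\mathbb{E}$ is a line bundle with $e(\mathbb{E})^2=0$ and $\int_{\overline{M}_{1,1}}e(\mathbb{E})=\frac{1}{24}$, the relation $1+c_{(a,b)}(\lambda)e(\mathbb{E})=\frac{e^{\mathsf{T}}(\mathbb{E}^{*}\otimes T_{p}Y)\,e^{\mathsf{T}}(\tilde{E}|_{p})}{e^{\mathsf{T}}(T_{p}Y)\,e^{\mathsf{T}}(\mathbb{E}^*\otimes \tilde{E}|_{p})}$ extracts $c_{(a,b)}(\lambda)$ as the first-order part of this ratio of equivariant Euler classes, a concrete rational function of the weights. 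With $\mathsf{U}_{(a,b)}$, $\mathsf{R}_{(a,b)0}$, $c_{(a,b)}(\lambda)$ and the regularized two-point limit in hand, I would substitute everything into Theorem \ref{G1} and carry out the sum over the $2n$ fixed points, using the weight specialization and the fact that the $\PP^{n-1}$-tangent directions enter only through symmetric sums $\sum_{c\neq b}$ over the remaining vertices. The torus weights must then cancel and the sum should collapse to the closed form $\frac{n(n^2-n+2)}{2}\left(-\frac{1}{6}\frac{1}{1-4q}\right)$.

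The main obstacle is twofold. The analytic part is extracting $\mathsf{U}_{(a,b)}$ and $\mathsf{R}_{(a,b)0}$ from the restricted hypergeometric $I$-function and evaluating the regularized two-point limit; this is the same kind of asymptotic analysis as in Section \ref{TT}, but it must now be carried out with the extra shift $n\mu_b$ tracked throughout. The combinatorial part, which I expect to be the genuine difficulty, is the final simplification of the $2n$-term fixed-point sum: one must verify that all dependence on the weights $\lambda_a,\mu_b$ cancels (as it must, since $\mathsf{F}_1$ is weight-independent) and that collecting the contributions of the $\PP^{n-1}$-directions produces exactly the polynomial $n^2-n+2$. Confirming this cancellation and identifying that coefficient is where the real bookkeeping lies, and it is also the step that, for general $m$, requires the more elaborate argument behind Theorem \ref{Th4}.
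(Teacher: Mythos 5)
Your outline follows the same route as the paper: both proofs are direct applications of Theorem \ref{G1} at $q_2=0$, with the genus zero inputs ($\mathsf{U}_i$, the leading asymptotic coefficients, and the regularized two-point limit) extracted from the asymptotics of the restricted $\mathbf{I}$-function via its Picard--Fuchs equation, the factor $c_i(\lambda)$ read off from the Hodge-bundle relation on $\overline{M}_{1,1}$, and everything summed over the $2n$ fixed points. The one substantive difference is the endgame, and it is exactly the step you flag as ``the real bookkeeping.'' You propose to carry general weights through the $2n$-term sum and verify that all weight dependence cancels; the paper instead \emph{exploits} the weight-independence of $\mathsf{F}_1$ by evaluating the entire localization expression in the limit $\boldsymbol{\lambda}\to 0$ (keeping only the $\PP^1$-weights specialized to $\alpha_1=1$, $\alpha_2=-1$). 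This limit is not a cosmetic choice: under it one finds $\lim_{\boldsymbol{\lambda}\to 0}b_i=0$ and $\lim_{\boldsymbol{\lambda}\to 0}q\frac{\partial}{\partial q}a_i=\frac{q\frac{\partial}{\partial q}I_0}{I_0}$, which forces each of the loop building blocks $A_i$, $B_i$, $C_i$ to vanish, so the whole two-point (``Loop'') contribution of Theorem \ref{G1} is identically zero and the answer comes purely from the vertex term, computed from just two easy limits, $\lim\sum_i q\frac{\partial}{\partial q}a_i=\frac{4nq}{1-4q}$ and $\lim\sum_i c_i(\lambda)L_i=-2n^2(n-1)\frac{q}{1-4q}$. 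Your plan, by contrast, treats the regularized two-point limit as a genuine ingredient to be computed and summed with general $\lambda_a,\mu_b$ (expressions involving $\sqrt{1-4q+\lambda_i^2n^2q}$ and worse), and leaves the symbolic cancellation unresolved; that route is not wrong in principle, but it is vastly more laborious, and without the limiting trick you would miss the structural fact that makes the closed form $\frac{n(n^2-n+2)}{2}\bigl(-\frac{1}{6}\frac{1}{1-4q}\bigr)$ drop out: the loop contribution is zero and only the vertex sum survives.
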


\subsubsection{$\mathsf{T}$-equivariant thoery.}
Let $\mathsf{T}:=(\CC^*)^2\ti (\CC^*)^n$ act on $\PP^1\ti\PP^{n-1}$ standardly on each components. Denote by
$\alpha_1,\alpha_2$ (resp. $\lambda_1,\dots,\lambda_{n}$) the weights of $(\CC^*)^2$ (resp. $(\CC^*)^n$). There are $2n$ fixed loci
$$p_i \in (\PP^1\ti\PP^{n-1})^{\mathsf{T}}\,\, \text{for}\, i=-n,\dots,-1,1,\dots,n$$
with following conditions:
\begin{align*}
 &H_1|_{p_i} = \left\{ \begin{array}{rl} \alpha_1 & \text{if } i > 0   \\
                                    \alpha_2 & \text{if } i < 0  \, , \end{array}\right. \\
 &H_2|_{p_i}=\lambda_{i}.
\end{align*}
Here we used the convention
\begin{align*}
\lambda_{-i}=\lambda_i \,\, \text{for} \, i=1,\dots,n.
\end{align*}

Also we will use following specialization,
$$\alpha_1=1\,\,,\,\,\alpha_2=-1\,.$$

\subsubsection{Picard-Fuchs equation.}
As in \cite{BigI}, we can evaluate the $\mathbf{I}$-function of Definition \ref{BigI} for $X$.

\begin{Prop}
 For $\mathbf{t}=t_1 \tilde{H}_1+t_2 \tilde{H}_2\in H^*_{\mathsf{T}}([\CC^2\ti\CC^n/\CC^*\ti\CC^*],\QQ)$,
 \begin{multline*}
     \mathbf{I}(t_1,t_2)=\sum_{d_1,d_2=0}^{\infty} q_1^{d_1}q_2^{d_2}e^{\sum_{i=1}^2 t_i(H_i+d_iz)/z}\\
     \cdot \frac{\prod_{k=1}^{2d_1+nd_2}(2H_1+nH_2+kz)}{\prod_{k_1}^{d_1}(H_1+1+k_1z)\prod_{k_1}^{d_1}(H_1-1+k_1z)\prod_{i=1}^{n}\prod_{k_2}^{d_2}(H_2-\lambda_i+k_2z)}\,.
 \end{multline*}
\end{Prop}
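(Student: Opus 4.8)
The plan is to evaluate $\mathbf{I}(t_1,t_2)$ directly from its definition (Definition \ref{BigI}) as the generating series of $\text{ev}_{\bullet*}$ of the localization residues $\text{Res}_{\mathsf{Q}_{k,\beta}}(\mathbf{t}^k)$, specializing the general toric graph-space computation of \cite{BigI,CKg0} to the GIT presentation $Y=\PP^1\ti\PP^{n-1}=[\CC^2\ti\CC^n/\CC^*\ti\CC^*]$ twisted by the obstruction bundle $e(\text{Obs})=e(R^1\pi_*f^*\tilde{E})$, where $\tilde{E}$ induces $\mathcal{O}(2,n)$. First I would recall from Section 2.3 the structure of the distinguished $\CC^*$-fixed locus: for curve class $\beta=(d_1,d_2)$ the space $\mathsf{Q}_\beta$ parametrizes quasimaps $f:\PP^1\to[\CC^2\ti\CC^n/\CC^*\ti\CC^*]$ with a base point of length $(d_1,d_2)$ concentrated at $0\in\PP^1$ and constant on $\PP^1\setminus\{0\}$, with $\mathsf{Q}_{k,\beta}=\mathsf{Q}_\beta\ti 0^k$ and $\text{ev}_\bullet$ recording the constant value in $Y$. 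The $\CC^*$-action from the distinguished component $C_0$ acts with weight $z=c_1(T_0\PP^1)$, and this is the source of every $kz$ shift in the stated formula.

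Next I would compute the equivariant Euler class of the virtual normal bundle $\text{Nor}^{\text{vir}}_{\mathsf{Q}_{k,\beta}}$, which governs the residue. It splits into the deformation-theoretic contribution of the underlying quasimap to $Y$ and the contribution of the obstruction twist. For the $\PP^{n-1}=[\CC^n/\CC^*]$ factor, whose $n$ coordinate line bundles carry equivariant weights $\lambda_1,\dots,\lambda_n$, the moving part of the sections over the length-$d_2$ base-point configuration at $0$ contributes exactly $\prod_{i=1}^n\prod_{k_2=1}^{d_2}(H_2-\lambda_i+k_2z)$ to $e(\text{Nor}^{\text{vir}})$, i.e. the $\PP^{n-1}$-factor of the denominator; this is the familiar Givental factor. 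Identically, the $\PP^1=[\CC^2/\CC^*]$ factor with weights $\alpha_1=1,\alpha_2=-1$ produces $\prod_{k_1=1}^{d_1}(H_1+1+k_1z)(H_1-1+k_1z)$. The obstruction bundle $R^1\pi_*f^*\tilde{E}$, with $c_1(\tilde{E})=2H_1+nH_2$ and fiberwise degree $2d_1+nd_2$ along class $\beta$, contributes its equivariant Euler class $\prod_{k=1}^{2d_1+nd_2}(2H_1+nH_2+kz)$ as the numerator. Finally, pushing the residue forward by $\text{ev}_{\bullet*}$ and summing over $\beta$ and over $k$ with the factor $\tfrac{1}{k!}$ reproduces, through the $k$ infinitesimal markings carrying $\mathbf{t}=t_1\tilde{H}_1+t_2\tilde{H}_2$, the exponential prefactor $e^{\sum_i t_i(H_i+d_iz)/z}$, the $d_iz$ shift being the $\CC^*$-weight of the class concentrated at $0$.

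The hardest part will be the careful bookkeeping of the virtual normal bundle: cleanly separating the fixed and moving parts of the tangent–obstruction complex of the base-point quasimap at $0$, pinning down the signs and the equivariant weights at $0\in\PP^1$ so that precisely the stated factors land in the numerator versus the denominator, and verifying that the base-point (length) contributions together with the node-smoothing deformations combine into the clean hypergeometric product rather than leaving spurious extra factors. Since this is in the end a direct specialization of the general $(0+,0+)$-graph-space localization of \cite{BigI}, I would organize the argument so as to reduce it to that computation and then simply read off all weights from the torus data $H_1|_{p_i}=\pm1$ and $H_2|_{p_i}=\lambda_i$ recorded above.
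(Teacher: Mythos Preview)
Your plan is essentially what the paper does: it states the formula and refers to \cite{BigI} without writing out the localization, so spelling out the graph-space residue computation as you propose is exactly the intended route.

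One point to correct before you carry this out. You identify the numerator $\prod_{k=1}^{2d_1+nd_2}(2H_1+nH_2+kz)$ as the equivariant Euler class of $R^1\pi_*f^*\tilde{E}$. That cannot be right: $\tilde{E}$ induces $\mathcal{O}(2,n)$, so along the distinguished $\PP^1$ the pullback has degree $2d_1+nd_2\ge 0$ and hence $R^1\pi_*f^*\tilde{E}=0$. For a hypersurface geometry (zero locus of a section of an ample bundle) the twist is by the \emph{direct image} $R^0\pi_*f^*\tilde{E}$; its $\CC^*$-moving part over the base-point configuration at $0\in\PP^1$ is what produces the stated numerator, while the fixed part $(2H_1+nH_2)$ is absent because the product starts at $k=1$. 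The paper's equation $e(\text{Obs})=e(R^1\pi_*f^*\tilde{E})$ in Section~4.1 is a typo carried over from the local Calabi--Yau discussion in Section~3, where $R^1$ is indeed correct for $K_{\PP^1\times\PP^1}$. Once you replace $R^1$ by $R^0$ in your argument, the rest of your bookkeeping (the Givental denominator factors from the two projective-space factors, the exponential from the infinitesimal markings, the $kz$ shifts from the $\CC^*$-weight at $0$) is exactly right and matches the standard $(0+,0+)$ computation in \cite{BigI}.
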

The function $\mathbf{I}$ satisfies following Picard-Fuchs equation

\begin{align}\label{PF2}
\left(\left(z\frac{\partial}{\partial t_1}\right)^2-1-q_1\prod_{k=1}^2\left(2 z\frac{\partial}{\partial t_1}+n z\frac{\partial}{\partial t_2}+kz\right)\right)\mathbf{I}=0.
\end{align}

Define $\mathbf{L}_i,\mathbf{a}_i$ and $\mathbf{b}_i$ by following equation:
\begin{align}\label{AF}
\mathbf{I}_i&=\exp\left(\frac{\mathbf{U}_i+\mathbf{a}_i z+\mathbf{b}_i z}z^2\right)+\mathcal{O}(z^2)\,,\\
\nonumber\mathbf{L}_i&=\frac{\partial}{\partial t_1}\mathbf{U}_i\,.
\end{align}

Applying \eqref{AF} to \eqref{PF2}, we obtain following equations.
$$(1-4q)L_i^2-4 n q \lambda_i L_i-n^2 q \lambda_i^2-1=0\, ,$$
\begin{align*}
    q\frac{ \partial}{\partial q}a_i&=\frac{-q\frac{ \partial}{\partial q}L_i (1-4q)+ 6 q L_i+3 n q \lambda_i}{2L_i(1-4q)-4nq\lambda_i}\\
                                                &=\frac{q(-8+\lambda_i^2 n^2(1-8q)+32q-2\lambda_i n\sqrt{1+(-4+\lambda_i^2 n^2)q})}{4(-1+4q)(1+(-4+\lambda_i^2 n^2)q)}\, ,\\
    q\frac{ \partial}{\partial q}b_i&=\frac{(-(q\frac{ \partial}{\partial q}a_i)^2-(q\frac{ \partial}{\partial q})^2 a_i)(1-4q)+6q q\frac{ \partial}{\partial q}a_i+2q}{2L_i(1-4q)-4nq \lambda_i}\\
                                              &=-\frac{\lambda_i n q(8-4 \lambda_i n\sqrt{1-4q+\lambda_i^2 n^2 q}+(-4+\lambda_i^2 n^2)q(8+\lambda_i n\sqrt{1-4q+\lambda_i^2 n^2 q})  )}{32(1+(-4+\lambda_i^2 n^2)q)^3}
\end{align*}
Here we used the notation in Section \ref{No} for the series $\mathbf{I}_i$, $\mathbf{L}$, $\mathbf{a}_i$ and $\mathbf{b}_i$.

By solving above differential equations with initial conditions
$$L_i(0)=1\,,\,a_i(0)=0\,, \, b_i(0)=0\,,\,\, \text{for} \,\, i=1,\dots,n\,\,,$$
we obtain following results for $i=1,\dots,n$.
\begin{align*}
  L_i&=\frac{2 \lambda_i n q + \sqrt{1-4q+\lambda_i^2 n^2 q}}{1-4q}\,,\\
  a_i&=\frac{1}{4}(-2 \text{ArcTanh}[\frac{2}{\lambda_i n}]+2\text{ArcTanh}[\frac{2\sqrt{1-4q+\lambda_i^2 n^2 q}}{\lambda_i n}]-\text{Log}[(1-4q)(1-4q+\lambda_i^2 n^2 q)])\,,\\
  b_i&=\frac{\lambda_i n(-6+\lambda_i n)}{24(-4+\lambda_i^2 n^2)}-\frac{\lambda_i n(-6\lambda_i^3 n^3 q+4 \lambda_i(n+6nq)-24\sqrt{1+(-4+\lambda_i^2 n^2)q})}{96(-4+\lambda_i^2 n^2)(1+(-4+\lambda_i^2 n^2)q)^{\frac{3}{2}}}\,.
\end{align*} 
 
Similiarly, solving above differential equations with initial conditions
$$ L_i(0)=-1\,,\,a_i(0)=0\, ,\, b_i(0)=0\, ,\,\, \text{for}\, i=-n,\dots,-1\, ,$$
we obtain following results for $i=-n,\dots,-1$. 
 \begin{align*}
  L_i&=\frac{2 \lambda_i n q - \sqrt{1-4q+\lambda_i^2 n^2 q}}{1-4q}\,,\\
  a_i&=\frac{1}{4}(2 \text{ArcTanh}[\frac{2}{\lambda_i n}]-2\text{ArcTanh}[\frac{2\sqrt{1-4q+\lambda_i^2 n^2 q}}{\lambda_i n}]-\text{Log}[(1-4q)(1-4q+\lambda_i^2 n^2 q)])\,,\\
  b_i&=-\frac{\lambda_i n(6+\lambda_i n)}{24(-4+\lambda_i^2 n^2)}+\frac{\lambda_i n(-6\lambda^3 n^3 q+4 \lambda_i(n+6nq)+24\sqrt{1+(-4+\lambda_i^2 n^2)q})}{96(-4+\lambda_i^2 n^2)(1+(-4+\lambda_i^2 n^2)q)^{\frac{3}{2}}}\,.
\end{align*}

\subsubsection{$\mathbf{S}$-operators.}

We introduce some power series in $q$. Define $I_0$, $I_1$ and $\tilde{I}_1$ by the following equation.
$$\mathbf{I}|_{t_1=0,t_2=0,q_2=0,q_1=q}=I_0+\frac{I_1 H_1+\tilde{I}_1 H_2}{z}+ \mathcal{O}(\frac{1}{z^2}).$$ 

We can explicitly calculate each series.

\begin{align*}
    &I_0=\sum \frac{(2d)!}{(d!)^2} q^d\,,\\
    &I_1=2 \sum \frac{(2d)!}{(d!)^2}(\text{Har}[2d]-\text{Har}[d])q^d\,,\\
    &\tilde{I}_1=n\sum \frac{(2d)!}{(d!)^2}\text{Har}[2d]q^d\,.
\end{align*}
Here $\text{Har}[n]:=\sum_{k =1}^n \frac{1}{k}$ .
 
It is easy to check following identities.
\begin{align*}
    &I_0=\frac{1}{\sqrt{1-4q}}\, ,\\
    &1+q\frac{d}{dq}(\frac{I_1}{I_0})=\frac{1}{\sqrt{1-4q}}\, ,\\
    &q\frac{d}{dq}\left(\frac{\tilde{I}_1}{I_0}\right)=n\left(\frac{4q}{1-4q}-\frac{1}{2\sqrt{1-4q}}+\frac{1}{2}\right)\,.
\end{align*} 

Define $\mathcal{Y}=q\frac{d}{dq}(\frac{\tilde{I}_1}{I_0})$.
Using Birkhoff factorization procedure, we obtain following equations for $\mathbf{S}$-operators.
\begin{align*}
 \mathbf{S}(1\ot 1)&=\frac{\mathbf{I}}{I_0}\, , \\
 \mathbf{S}(H_1\ot 1)&=\frac{z\frac{d}{dt_1}\mathbf{S}(1\ot 1)-\mathcal{Y} z\frac{d}{dt_2}\mathbf{S}(1\ot 1)}{I_0}\\
 &=\frac{1}{I_0}\left(z\frac{t}{dt_1}(\frac{\mathbf{I}}{I_0})- \mathcal{Y} H_2 \frac{\mathbf{I}}{I_0}\right)\,,\\
 \mathbf{S}(1\ot H_2^k)&=H_2^k \mathbf{S}(1\ot 1) \,,\\
 \mathbf{S}(H_1\ot H_2^k)&=H_2^k \mathbf{S}(H_1\ot 1)\,.
\end{align*}

Applying asymtotic form \eqref{AF} to above equation, we obtain the following results.

\begin{align}\label{Ass}
 e^{-\frac{U_i}{z}}S_i(1\ot H_2^j)&=\frac{\lambda_i^je^{a_i}}{I_0}+\frac{\lambda_i^je^{a_i} b_i}{I_0}z+\mathcal{O}(z^2)\, , \\
 \nonumber e^{-\frac{U_i}{z}}S_i(H_1\ot H_2^j)&=\frac{\lambda_i^je^{a_i}}{I_0}\left(\frac{L}{I_0}-\frac{\mathcal{Y} H_2}{I_0}\right)\\
 \nonumber &+\frac{\lambda_i^je^{a_i}}{I_0}\left(\frac{L b_i}{I_0}-\frac{\mathcal{Y} H_2 b_i}{I_0}+\frac{q\frac{\partial}{\partial q} a_i}{I_0}-\frac{q\frac{\partial}{\partial q}I_0}{I_0^2}\right)z+\mathcal{O}(z^2),\ .
\end{align}
Here we also used the notations in Section \ref{No} for the series $\mathbf{S}_i$.

\subsubsection{Proof of Theorem \ref{Th3}}

Since $\mathsf{F}_1$ do not depend on $\boldsymbol{\lambda}:=(\lambda_0,\dots,\lambda_{n-1})$ by dimensional reason, we have 

$$q\frac{\partial}{\partial q} \mathsf{F}_1(q,0)=\text{lim}_{\boldsymbol{\lambda} \rightarrow 0}q\frac{\partial}{\partial q} \mathsf{F}_1(q,0) .$$
Throughout the paper, whenever we take the limit $$\boldsymbol{\lambda}\rightarrow 0\,,$$
we can check that it is well-defined.
Denote by Vert (resp. Loop) the first (resp. the second) summand in Theorem \ref{G1}. We can calculate each term of Vert as follows.

\begin{align*}
\text{lim}_{\boldsymbol{\lambda}\rightarrow 0} \sum_{i} q \frac{\partial }{\partial q}a_i = 2n \frac{2q}{1-4q}.
\end{align*}

\begin{align*}
&\text{lim}_{\boldsymbol{\lambda}\rightarrow 0} \sum_i c_i(\lambda) L_i\\
&=\text{lim}_{\boldsymbol{\lambda}\rightarrow 0}\sum_{i=1}^n \left(-\frac{1}{2}-(\sum_{j\ne i}\frac{1}{\lambda_i-\lambda_j})+\frac{1}{2+n \lambda_i}\right)\left(\frac{2 \lambda_i n q+\sqrt{1-4q+\lambda_i^2 n^2 q}}{1-4q}-1\right)\\
&+\text{lim}_{\boldsymbol{\lambda}\rightarrow 0}\sum_{i=1}^n \left(\frac{1}{2}-(\sum_{j\ne i}\frac{1}{\lambda_i-\lambda_j})+\frac{1}{-2+n \lambda_i}\right)\left(\frac{2 \lambda_i n q-\sqrt{1-4q+\lambda_i^2 n^2 q}}{1-4q}+1\right)\\
&=\frac{-4n^2(n-1)}{2}\frac{q}{1-4q}.
\end{align*}

Therefore we have
\begin{align*}
\text{lim}_{\boldsymbol{\lambda}\rightarrow 0} \text{Vert}= -\frac{n(n^2-n+2)}{12}\frac{q}{1-4q}.
\end{align*}

Now to finish the proof, we need to show
$$\text{lim}_{\boldsymbol{\lambda}\rightarrow 0} \text{Loop}=0\,.$$

\begin{align*}
\text{Loop}&=\frac{1}{2} \sum_i (H_1 |_{p_i}+q\frac{\partial}{\partial q}U_i) \text{lim}_{(x,y)\rightarrow (0,0)}\left(e^{-U_i(\frac{1}{x}+\frac{1}{y})} e_iV_{ii}(x,y)-\frac{1}{x+y}  \right) \\
&=\frac{1}{2}\sum_i \frac{L_i}{e_i}\left[e^{-U_i(\frac{1}{x}+\frac{1}{y})}\sum_i S_i(\phi_k) S_i(\phi^k)\right]_x\\
&=\frac{1}{2}\sum_i \frac{L_i}{e_i}\left(\frac{-2\prod_{j \ne i}(\lambda_i-\lambda_j)}{(-2+n\lambda_i)(2+n\lambda_i)}(A_i+C_i)+\frac{n\lambda_i\prod_{j \ne i}(\lambda_i-\lambda_j)}{(-2+n\lambda_i)(2+n\lambda_i)}B_i\right)
\end{align*}
where

\begin{align*}
A_i&=\left[e^{-U_i(\frac{1}{x}+\frac{1}{y})}S_i(1) S_i(1)\right]_x\,,\\
B_i&=\left[e^{-U_i(\frac{1}{x}+\frac{1}{y})}S_i(1) S_i(H_1)+e^{-U_i(\frac{1}{x}+\frac{1}{y})}S_i(H) S_i(1)\right]_x\,,\\
C_i&=\left[e^{-U_i(\frac{1}{x}+\frac{1}{y})}S_i(H_1) S_i(H_1)\right]_x\,.
\end{align*}

By \eqref{Ass}, we obtain the following results.
\begin{align*}
A_i&=e^{2a_i} \frac{b_i}{I_0^2}\,, \\
B_i&=\frac{e^{2a_i}}{I_0^2}\left(2\left(\frac{L_i b_i}{I_0}-\frac{\mathcal{Y} b_i \lambda_i}{I_0}\right)+\frac{q\frac{\partial}{\partial q}a_i}{I_0}-\frac{q\frac{\partial}{\partial q}I_0}{I_0^2}\right)\,,\\
C_i&=\frac{e^{2a_i}}{I_0^2}\left(\frac{L_i}{I_0}-\frac{\mathcal{Y}\lambda_i}{I_0}\right)\left(\frac{L_i b_i}{I_0}-\frac{\mathcal{Y} b_i \lambda_i}{I_0}+\frac{q\frac{\partial}{\partial q}a_i}{I_0}-\frac{q\frac{\partial}{\partial q}I_0}{I_0^2}\right)\,.
\end{align*}

We can easily check the followings.
\begin{align*}
 &\text{lim}_{\boldsymbol{\lambda}\rightarrow 0} b_i=0\\
 &\text{lim}_{\boldsymbol{\lambda}\rightarrow 0} q\frac{\partial}{\partial q} a_i = \frac{q\frac{\partial}{\partial q} I_0}{I_0}
\end{align*}

As a results, we conclude
\begin{align*}
&\text{lim}_{\boldsymbol{\lambda}\rightarrow 0} A_i=0\\
&\text{lim}_{\boldsymbol{\lambda}\rightarrow 0} B_i=0\\
&\text{lim}_{\boldsymbol{\lambda}\rightarrow 0} C_i=0.
\end{align*}

Therefore we have
$$\text{lim}_{\boldsymbol{\lambda}\rightarrow 0} \text{Loop}=0$$
which finish the proof.\qed

\subsection{General case.} In this section we study the genus one quasimap invariants of hypersurface of degree $(m,n)$ in $\PP^{m-1}\times\PP^{n-1}$. Let X be the hypersurface of degree $(m,n)$ in $\PP^{m-1}\times\PP^{n-1}$. As before, we denote by $$\mathsf{F}_1$$ the genus one quasimap potential function of X. We will give a poof of Theorem \ref{Th4} in this section.

\subsubsection{$\mathsf{T}$-equivariant theory.} Let $(\CC^*)^m\times(\CC^*)^n$ act on $\PP^{m-1}\times\PP^{n-1}$ standardly on each components. Denote by $\alpha_0,\alpha_1,\dots\alpha_{m-1}\,\,(\text{resp.}\, \lambda_0,\dots,\lambda_{n-1})$ the weight of $(\CC^*)^m$  (resp. $(\CC^*)^n$). There are $m\times n$ fixed points
\begin{align*}
 p_{ki}\,\,\, \text{for}\,\,\, k=0,\dots,m-1\,\,,\,i=0,\dots,n-1\,
\end{align*}
with following conditions:
\begin{align*}
    &H_1|_{p_{ki}}=\alpha_k\,,\\
    &H_2|_{p_{ki}}=\lambda_i\,.
\end{align*}

We will use the following specialization throught the subsection,
$$\alpha_i=\zeta^i.$$
Here, $\zeta$ is primitive $m$-th root of unity.

\subsubsection{$\mathbf{I}$-function and Picard-Fuchs equation} As before, we can evaluate the $\mathbf{I}$-function for $X$.

\begin{Prop}
 For $\mathbf{t}=t_1 \tilde{H}_1+t_2 \tilde{H}_2\in H^*_{\mathsf{T}}([\CC^m\ti\CC^n/\CC^*\ti\CC^*],\QQ)$,
 \begin{multline*}
     \mathbf{I}(t_1,t_2)=\sum_{d_1,d_2=0}^{\infty} q_1^{d_1}q_2^{d_2}e^{\sum_{i=1}^2 t_i(H_i+d_iz)/z}\\
     \cdot \frac{\prod_{k=1}^{md_1+nd_2}(mH_1+nH_2+kz)}{\prod_{j=0}^{m-1}\prod_{k_1}^{d_1}(H_1-\alpha_j+k_1z)\prod_{i=0}^{n-1}\prod_{k_2}^{d_2}(H_2-\lambda_i+k_2z)}\,.
 \end{multline*}
\end{Prop}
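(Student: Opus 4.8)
The plan is to evaluate the big $\mathbf{I}$-function of Definition \ref{BigI} for the toric target $Y=\PP^{m-1}\ti\PP^{n-1}$ by $\CC^*$-localization on the quasimap graph space, exactly as in \cite{BigI} and in direct parallel with the evaluation already recorded for the degree $(2,n)$ hypersurface in $\PP^1\ti\PP^{n-1}$ (the case $m=2$). Recall that $\mathbf{I}$ is assembled from the push-forwards $\mathrm{ev}_{\bullet *}$ of the localization residues of the fixed loci $\mathsf{Q}_{k,\beta}$, on which the whole curve class $\beta=(d_1,d_2)$ and all $k$ infinitesimal markings are concentrated over $0\in\PP^1$. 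First I would resum the marking insertions: inserting $\mathbf{t}^k/k!=\big(t_1\wt H_1+t_2\wt H_2\big)^k/k!$ and summing over $k\ge 0$ produces the prefactor $e^{\sum_i t_i(H_i+d_iz)/z}$, in which the summand $t_i d_i$ records the $\CC^*$-weight accumulated by the length-$d_i$ base point over $0$ in the $i$-th projective factor (with $z=c_1(T_0\PP^1)$ as fixed by \eqref{act}). This is the standard mechanism and is identical to the one- and two-parameter cases treated above.

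Next I would compute the residue of the base-point locus $\mathsf{Q}_\beta$ over a $\mathsf{T}$-fixed point $p_{ki}$. Since $Y=[\CC^m\ti\CC^n/\CC^*\ti\CC^*]$ and the degree is concentrated at a single smooth point of the domain, the virtual normal bundle is read directly off the deformation theory of the homogeneous coordinates. The $m$ coordinates of the $\CC^m$-factor, carrying equivariant weights $H_1-\alpha_j$, contribute the factors $H_1-\alpha_j+k_1 z$ for $k_1=1,\dots,d_1$ and $j=0,\dots,m-1$, while the $n$ coordinates of $\CC^n$ contribute $H_2-\lambda_i+k_2 z$ for $k_2=1,\dots,d_2$ and $i=0,\dots,n-1$; together these give the denominator. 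The numerator is the $\CC^*$-equivariant Euler class of the push-forward of the twist line bundle $\tilde E$, which induces the degree $(m,n)$ bundle and hence has equivariant first Chern class $mH_1+nH_2$; over the base point of total length $md_1+nd_2$ this produces $\prod_{k=1}^{md_1+nd_2}(mH_1+nH_2+kz)$. Assembling numerator, denominator and the exponential prefactor, and applying $\mathrm{ev}_\bullet$ (which returns the displayed polynomial in $H_1,H_2$), reproduces the stated formula.

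The step I expect to demand the most care is the bookkeeping of the $\CC^*$-weights $z$ against the base-point lengths $d_1,d_2$ in identifying the virtual normal bundle of $\mathsf{Q}_\beta$, together with verifying that no spurious automorphism or obstruction factors are introduced by the base-point structure at the distinguished point $0\in\PP^1$. Once the weight conventions are pinned down consistently with \eqref{act}, the computation is the routine two-parameter generalization of the degree $(2,n)$ case, so I would present it by invoking the general toric recipe of \cite{BigI,CKg0} and merely recording the weight data specific to $\PP^{m-1}\ti\PP^{n-1}$ under the specialization $\alpha_i=\zeta^i$.
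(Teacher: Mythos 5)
Your proposal is correct and follows essentially the same route as the paper: the paper does not spell out a proof at all but simply invokes the graph-space localization evaluation of the big $\mathbf{I}$-function from \cite{BigI} (``as before''), and your sketch—resumming the infinitesimal-marking insertions into the exponential prefactor $e^{\sum_i t_i(H_i+d_iz)/z}$, reading the denominator off the virtual normal bundle of the base-point locus $\mathsf{Q}_\beta$, and obtaining the numerator from the degree-$(m,n)$ twist $\tilde E$—is precisely that computation, specialized to $\PP^{m-1}\ti\PP^{n-1}$. The only point to pin down carefully is the one you already flag, namely the weight bookkeeping ensuring the twist factor runs over $k=1,\dots,md_1+nd_2$, which is the standard convention for hypersurface (Euler-class) twists.
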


The function $\mathbf{I}$ satisfies following Picard-Fuchs equation,

\begin{align*}
    \left(\left(z\frac{\partial}{\partial t_1}\right)^2-1-q_1\prod_{l=1}^{m}\left(m z\frac{\partial}{\partial t_1}+n z\frac{\partial}{\partial t_2}+lz\right)\right)\mathbf{I}=0
\end{align*}

Define $\mathbf{L}_{ki}, \mathbf{a}_{ki}$ and $\mathbf{b}_{ki}$ by following identity.
\begin{align*}
\mathbf{I}|_{p_{ki}}=&e^{\frac{\mathbf{U}_{ki}+\mathbf{a}_{ki}z+\mathbf{b}_{ki}z^2}{z}}\,,\\
\mathbf{L}_{ki}=&\frac{d}{dt_1}\mathbf{U}_{ki}\,.
\end{align*}

We obtain the following results.
\begin{align}\label{BE}
    \nonumber\sum_k L_{ki}&=\frac{n\lambda_i m^mq}{1-m^mq}\,,\\
    \text{lim}_{\boldsymbol{\lambda}\rightarrow 0} L_{ki}&=\alpha^k L\,,\\
    \nonumber\text{lim}_{\boldsymbol{\lambda}\rightarrow 0} a_{ki}&=\text{Log}L\,,\\
    \nonumber\text{lim}_{\boldsymbol{\lambda}\rightarrow 0} b_{ki}&=\frac{(m-2)(m+1)}{24m\alpha^k}(1-L^n)
\end{align}
where $L=(1-m^mq)^{-\frac{1}{m}}$.

\subsubsection{$\mathbf{S}$-operators.}
Since we will use the specializations $\lambda_i=0$ in the end, it is enough for us to calculate $\mathbf{S}$-operator modulo $\lambda_i$.

\begin{align}\label{S2}
    \mathbf{S}(H_1^k)=\frac{z\frac{d}{dt_1}\mathbf{S}(H_1^{k-1})}{C_k}+\mathcal{O}(\boldsymbol{\lambda},q_2)\,,\\
    \nonumber\mathbf{S}(H_1^k\ot H_2^l)=H_2^l \mathbf{S}(H_1^k)+\mathcal{O}(\boldsymbol{\lambda},q_2)\,.
\end{align}
where $C_k(q_1)\in\CC[[q_1]]$ is defined inductively by
\begin{align}\label{Ckg}
    C_k=z\frac{d}{dt_1}\mathbf{S}(H_1^{k-1})|_{z=\infty,t_1=0,t_2=0,q_2=0,H_1=1}\,,\,\,\,\text{for}\,\,k=1,2,\dots,m-1\,,
\end{align}
with $C_0(q_1):=\mathbf{S}(1)|_{z=\infty,t_1=0,t_2=0,q_2=0}$.
\subsubsection{Proof of Theorem \ref{Th4}.}

As in the previous subsection, we have 
\begin{align*}
 q\frac{\partial}{\partial q}F_1=\text{lim}_{\boldsymbol{\lambda}\rightarrow 0} q\frac{\partial}{\partial q}F_1.
\end{align*}

The followings are immidiate consequences of \eqref{BE}.
\begin{align*}
    \text{lim}_{\boldsymbol{\lambda}\rightarrow 0}\sum_{k,i} q\frac{\partial}{\partial q}a_{ki}&=mn\frac{\mathsf{D}L}{L}\,,\\
    \text{lim}_{\boldsymbol{\lambda}\rightarrow 0}\sum_{k,i} c_{ki} L_{ki}&=-\frac{(m^2-m-2)nL}{2}-\frac{(n-1)n^2}{2}\frac{m^mq}{1-m^mq}\,.
\end{align*}

Therefore we obtain the following results.
\begin{align}\label{Vert}
    \text{lim}_{\boldsymbol{\lambda}\rightarrow 0}\text{Vert}=\frac{1}{48}n(-n^2+n-2)(L^m-1)+\frac{1}{48}n(2+m-m^2)(L-1)\,.
\end{align}

Now we calculate the loop contribution. We have 

\begin{align*}
    e_{ki}=\frac{\alpha^k-\alpha^{k+1}\dots(\alpha^k-\alpha^{k+m-1})\prod_{j\ne i}(\lambda_i-\lambda_j)}{m\alpha^k+n\lambda_i}\,,\\
    \phi_{ki}=\frac{(H_1-\alpha^{k+1})\dots(H_1-\alpha^{k+m-1})}{(\alpha^k-\alpha^{k+1})\dots(\alpha^k-\alpha^{k+m-1})}\prod_{j\ne i}\frac{H_2-\lambda_j}{\lambda_i-\lambda_j}\, ,\\
    \phi^{ki}=\frac{(H_1-\alpha^k)\dots(H_1-\alpha^{k+m-1})\prod_{j\ne i}(H_2-\lambda_j)}{m\alpha^k+n\lambda_i}\,.
\end{align*}

We obtain following calculation of loop contributions for each fixed point $p_{ki}$.
\begin{align*}
    &\text{lim}_{\boldsymbol{\lambda}\rightarrow 0}\frac{1}{2}\frac{L_{ki}}{e_{ki}}\sum_{0\le j_1 \le m-1,0\le j_2\le n-1}S_{ki}(\phi_{j_1j_2})S_{ki}(\phi^{j_1j_2})\\
    =&\text{lim}_{\boldsymbol{\lambda}\rightarrow 0}\frac{1}{2}\frac{L_{ki}}{e_{ki}}\sum_{0\le r \le m-1}S_{ki}(\phi_{ri})S_{ki}(\phi^{ri})\\
    =&\text{lim}_{\boldsymbol{\lambda}\rightarrow 0}\frac{1}{2}\sum_{0\le r \le m-1}\frac{L_{ki}(m\alpha^k+n\lambda_i)}{(\alpha^k-\alpha^{k+1})\dots(\alpha^k-\alpha^{k+m-1})\prod_{j\ne i}(\lambda_i-\lambda_j)}S_{ki}(\tilde{\phi}_r)S_{ki}(\tilde{\phi}^r)\\
    \cdot&\frac{\prod_{j\ne i}(\lambda_i-\lambda_j)\cdot m\alpha^k}{m\alpha^k+n\lambda_i}\\
    =&\frac{L}{2\alpha^{k(m-3)}}\text{lim}_{\boldsymbol{\lambda}\rightarrow 0}\left[\frac{S_{ki}(1)S_{ki}(H_1^{m-2})+\dots+S_{ki}(H_1^{m-2})S_{ki}(1)+S_{ki}(H_1^{m-1})\ot S_{ki}(H_1^{m-1})}{m}\right]_{x}\\
    =&\frac{1}{2m}\left(\frac{(m^2-m-2)}{24}(L-1)+\frac{(3m-5)(m-2)}{24}(L^m-1)-\sum_{k=0}^{m-3}\binom{m-1-k}{2}\frac{q\frac{d}{dq}C_k}{C_k}\right)\,.
\end{align*}
where
\begin{align*}
    \tilde{\phi}_r&=\frac{(H_1-\alpha^{r+1})\dots(H_1-\alpha^{r+m-1})}{(\alpha^r-\alpha^{r-1})\dots(\alpha^r-\alpha^{r+m-1})}\,,\\
    \tilde{\phi}^r&=\frac{(H_1-\alpha^r)\dots(H_1-\alpha^{r+m-1})}{m\alpha^r}\,.
\end{align*}
The first equality in above equations follow from \eqref{S2}.
The last equality in above equations are also straightforward calculations using \eqref{S2}. Since similar calculations appeared in \cite{KL}, we do not repeat this in our paper. 

Finally we obtain the following result for the loop contribution.

\begin{multline}\label{Loop}
    \text{lim}_{\boldsymbol{\lambda}\rightarrow 0}\text{Loop}=\text{lim}_{\boldsymbol{\lambda}\rightarrow 0}\sum_{ki}\frac{1}{2}\frac{L_{ki}}{e_{ki}}\sum_{j_1,j_2}S(\phi_{j_1j_2})S(\phi^{j_1j_2})\\
    =\frac{n}{2}\left(\frac{(m^2-m-2)}{24}(L-1)+\frac{(3m-5)(m-2)}{24}(L^m-1)-\sum_{k=0}^{m-3}\binom{m-1-k}{2}\frac{q\frac{d}{dq}C_k}{C_k}\right)\,.
\end{multline}

By combining \eqref{Vert} and \eqref{Loop}, we obtain the equation in the Theorem \ref{Th4}. \qed

\end{document}